\DeclareMathAccent{\mathring}{\mathalpha}{operators}{"17}
\newcommand{\mysection}[1]{\section{#1}
      \setcounter{equation}{0}}
\newcommand{\nlimsup}{\operatornamewithlimits{\overline{lim}}}
\newcommand{\nliminf}{\operatornamewithlimits{\underline{lim}}}
\newtheorem{theorem}{Theorem}[section]
\newtheorem{lemma}[theorem]{Lemma}
\newtheorem{corollary}[theorem]{Corollary}
\theoremstyle{definition}
\newtheorem{assumption}{Assumption}[section]
\theoremstyle{remark}
\newtheorem{remark}{Remark}[section]
\newcommand\cbrk{\text{$]$\kern-.15em$]$}}
\newcommand\opar{\text{\raise.2ex\hbox{${\scriptstyle | }$}\kern-.34em$($} }
 \def\dashint{%
 \operatorname%
 {\,\,\text{\bf--}\kern-.98em\DOTSI\intop\ilimits@\!\!}}
\newcommand{\WO}{\mathring{W}}
\newcommand\bR{\mathbb{R}}
\newcommand\bC{\mathbb{C}}
\newcommand\bL{\mathbb{L}}
\def\bM{\mathbb{M}}
\newcommand\bZ{\mathbb{Z}}
\newcommand\cF{\mathcal{F}}
\newcommand\cQ{\mathcal{Q}}
\newcommand\cD{\mathcal{D}}
\newcommand\cS{\mathcal{S}}
\newcommand\cU{\mathcal{U}}
\newcommand\frC{\mathfrak{C}}
\begin{document}

\title[Fully nonlinear equations with VMO coefficients
]{On fully nonlinear elliptic and parabolic equations in
domains with VMO coefficients }

\author{Hongjie Dong}
\thanks{The first author was partially supported by
NSF grant DMS-0800129}
\email{Hongjie\_Dong@brown.edu}
\address{182 George Street, Brown University, Providence, RI, 02912.}

\author{N.V. Krylov}
\thanks{The second author was partially supported by
NSF Grant DMS-0653121}
\email{krylov@math.umn.edu}
\address{127 Vincent Hall, University of Minnesota,
 Minneapolis, MN, 55455}

\author{Xu Li}
\email{lixxx489@umn.edu}
\address{127 Vincent Hall, University of Minnesota,
 Minneapolis, MN, 55455}

\keywords{Vanishing mean oscillation, fully nonlinear
elliptic and parabolic
equations, Bellman's equations}

\subjclass[2010]{35K61, 35B65, 35R05}

\begin{abstract}
We prove  the  solvability in Sobolev spaces $W^{1,2}_p$,
$p > d+1$, of the terminal-boundary value problem for
a class of fully nonlinear parabolic equations, including parabolic
Bellman's equations,  in bounded cylindrical
domains with VMO ``coefficients''. The solvability in
$W^{2}_p$, $p > d$, of the corresponding
 elliptic boundary-value problem is also obtained.
\end{abstract}

\maketitle

\mysection{Introduction and main results}

In this article, we consider parabolic  equations
$$
\partial_t u(t,x) + F(D^{2}u(t,x),t,x)
$$
\begin{equation}
                                                \label{7.29.1}
+G(D^{2}u(t,x),D u(t,x),u(t,x),t,x)=0
\end{equation}
in subdomains of $\bR^{d+1}=\{(t,x):t\in\bR,x\in\bR^{d}\}$, where
$$
\bR^{d}=\{x=(x^{1},...,x^{d}):x^{1},...,x^{d}\in
\bR=(-\infty,\infty)\}.
$$
Here
$$
D^{2}u=(D_{ij}u),\quad Du=(D_{i}u),\quad
D_{i}=\frac{\partial}{\partial x^{i}},\quad D_{ij}=D_{i}D_{j},\quad
\partial_{t}=\frac{\partial}{\partial t}.
$$
We introduce $\cS$ as the set of symmetric $d\times d$ matrices,
fix some constants
$\delta\in(0,1)$ and  $K\in\bR_{+}:=
(0,\infty)$,  and throughout the article we assume
that

$({\rm H}_{1})$
$F(u'',t,x)$ is convex and positive homogeneous of degree one
with respect to $u''\in\cS$ and
 for all values
of the arguments  and $\xi\in\bR^{d}$
$$
 \delta|\xi|^2 \leq F(u''+
\xi\xi^{*} ,t,x)
-F(u'',t,x) \leq \delta^{-1}|\xi|^2;
$$

$({\rm H}_{2})$  $G(u'',u',u,t,x)$, $u''\in\cS,u'\in\bR^{d},u\in\bR$, is nonincreasing
in $u$ and  for all values
of the arguments (notice $u''$ and not $v''$)
$$
|G(u'',u',u,t,x)-
G(u'',v',v,t,x)|\leq K\big(|u'-v'|
+|u-v| \big),
$$
$$
|G(u'',u',u,t,x)|\leq
\chi(|u''|)|u''|+ K(|u'|+|u|)+\bar{G}(t,x),
$$
where $\bar{G}$ and $\chi$ are given functions such that
$\chi:\bar\bR_+\to \bar \bR_+$ is bounded, decreasing, and
$\chi(r)\to0$ as
$r\to\infty$;

$({\rm H}_{3})$ $F(u'',t,x)+G(u'',u',u,t,x)$ is convex with respect
to $u''\in \cS$ and for all values of the arguments and
$\xi\in\bR^{d}$
$$
\delta|\xi|^2 \leq F(u''+\xi\xi^{*} ,t,x)
+G(u''+\xi\xi^{*} ,u',u,t,x)
$$
$$
-F(u'',t,x) -G(u''  ,u',u ,t,x)\leq \delta^{-1}|\xi|^2 .
$$

We shall derive a priori estimates only using conditions
 $({\rm H}_{1})$ and $({\rm H}_{2})$. It is in the proofs of the
solvability where condition $({\rm H}_{3})$ plays its role.
However, we have the following

{\em Conjecture}. In (H$_{3}$) the convexity assumption on
$F(u'',t,x)+G(u'',u',u,t,x)$ with respect
to $u''\in \cS$
can be dropped.

To state our main results, we introduce a few notation.
For $r>0$,
 $x\in \bR^d$, and $t\in \bR$, we denote
$$
B_r(x)=\{y\in\bR^{d}: |x-y|<r\},\quad
Q_r(t,x)=(t,t+r^2)\times B_r(x).
$$
If $\cD$ is a domain in $\bR^d$ and $-\infty
\le S<T<\infty$, we denote the parabolic boundary of the cylinder
$(S,T)\times \cD$ by
$$
\partial' ((S,T)\times \cD)=(\{T\}\times \cD)\cup ((S,T]\times \partial \cD).
$$
Finally, for any $T>0 $, we define
$\cD_T=(0,T)\times \cD$.

The following
 VMO (vanishing mean oscillation)  assumption is
imposed on the leading term in \eqref{7.29.1}
with a constant $\theta\in (0,1]$ to be specified later.

\begin{assumption}
                                \label{assump1}
There exists $R_0\in(0,1]$ such that for any
$r\in (0, R_0]$, $\tau\in \bR$, and  $z\in \cD$
one can find a function $\bar{F} (u'' )$ (independent
of $(t,x)$)
satisfying condition $({\rm H}_{1})$ and such that
for any $u''\in\cS$ with $|u''|=1$ we have
\begin{equation}
                                                     \label{7.30.2}
\int_{Q_{r}(\tau,z)}
|
F(u'' ,t,x)-\bar{F}(u'')| \,dx\,dt\leq \theta
r^{ d+2}.
\end{equation}
\end{assumption}

The first main result of the article is about
the terminal-boundary value problem for  fully nonlinear
parabolic equations with ``VMO
coefficients''
 in bounded cylinders.

\begin{theorem}
                                    \label{mainthm1}
Let $p>d+1$ be a constant,
$T\in\bR_{+}$, and let $\cD$ be a bounded
$C^{1,1}$ domain in
$\bR^d$.
Assume that $\bar G\in L_{p}(\cD_{T})$.
Then there exists a constant $\theta\in (0,1]$ depending
only on $d$, $p$, $\delta$, and the $C^{1,1}$ norm of $\partial \cD$
such that if Assumption \ref{assump1} is satisfied with this
 $\theta$, then the following assertions hold.
For any $g\in W^{1,2}_p(\cD_T)$, there is a unique solution
$u\in W^{1,2}_p(\cD_T)$ to \eqref{7.29.1} such
that $u-g\in \WO^{1,2}_p(\cD_T)$. Moreover, we have
\begin{equation}
                                \label{eq16.01}
\|u\|_{W^{1,2}_p(\cD_T)}\le N
\| \bar G\|_{W^{1,2}_p(\cD_T)}+N\|g\|_{W^{1,2}_p(\cD_T)}
+ N_{0} ,
\end{equation}
where $N$ depends only on $d$, $p$, $\delta$, $K$,
$R_{0}$, the  $C^{1,1}$
norm of $\partial \cD$, and $\text{\rm diam}(\cD)$
 and $ N_{0} $ depends only on the same objects,  $T$,  and
$\chi$. In particular, $ N_{0} =0$ if $\chi\equiv0$.
\end{theorem}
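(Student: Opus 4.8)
\emph{Reduction and uniqueness.} Replacing $u$ by $u-g$ reduces everything to the case $g\equiv0$: the new unknown lies in $\WO^{1,2}_p(\cD_T)$ and solves an equation with the same $F$ (so Assumption \ref{assump1} is untouched) and with $G$ replaced by a function still obeying $({\rm H}_2)$, $({\rm H}_3)$ but with $\bar G$ increased by $N(|D^2g|+|Dg|+|g|+|\partial_tg|)$, where one uses that $({\rm H}_1)$ forces $|F(u''+w,t,x)-F(u'',t,x)|\le\delta^{-1}|w|$. So it suffices to produce $u\in\WO^{1,2}_p(\cD_T)$ solving \eqref{7.29.1} with $\|u\|_{W^{1,2}_p(\cD_T)}\le N\|\bar G\|_{L_p(\cD_T)}+N_0$. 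Uniqueness is then immediate: the difference $w$ of two $W^{1,2}_p$-solutions with the same data lies in $\WO^{1,2}_p(\cD_T)$ and, combining the increments of $F$ with those of the $u''$-dependence of $G$ by means of $({\rm H}_3)$ (which makes the combined second-order operator uniformly elliptic) and handling the $(u',u)$-increments by $({\rm H}_2)$ and the monotonicity of $G$ in $u$, it solves a linear uniformly parabolic equation with bounded measurable coefficients, nonpositive zeroth-order coefficient and zero data; the parabolic Aleksandrov--Bakelman (Krylov--Tso) estimate forces $w\equiv0$.

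\emph{The a priori estimate.} This is the bulk, and it uses only $({\rm H}_1)$, $({\rm H}_2)$. Since $F$ is homogeneous of degree one, $F(0,\cdot)\equiv0$, so $({\rm H}_1)$ squeezes $u\mapsto F(D^2u,\cdot)$ between Pucci extremal operators with ellipticity $\delta$; and since $\chi(r)\to0$, $({\rm H}_2)$ gives, for every $\varepsilon>0$, $|G(D^2u,Du,u,\cdot)|\le\varepsilon|D^2u|+C_\varepsilon+K(|Du|+|u|)+\bar G$. Absorbing $\varepsilon|D^2u|$ into the extremal operator and using that $G$ is nonincreasing in $u$, the Krylov--Tso parabolic maximum principle yields $\|u\|_{L_\infty(\cD_T)}\le N\|\bar G\|_{L_{d+1}(\cD_T)}+N_0\le N\|\bar G\|_{L_p(\cD_T)}+N_0$, with $N_0$ carrying the dependence on $T$ and $\chi$ through $C_\varepsilon$, so $N_0=0$ when $\chi\equiv0$. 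For the Hessian one runs the VMO perturbation (mean oscillation) scheme: in a small cylinder $Q_r(\tau,z)$ one compares $u$ with the solution $\overline u$ of $\partial_t\overline u+\bar F(D^2\overline u)=0$, where $\bar F$ is as in Assumption \ref{assump1} and $\overline u$ shares the parabolic boundary values of $u$; by the parabolic Evans--Krylov theorem --- whose only hypothesis, convexity of $\bar F$, is $({\rm H}_1)$ --- $\overline u$ has scale-invariant interior $C^{2,\alpha}$ bounds, while $u-\overline u$ solves a uniformly parabolic equation whose right-hand side $\bar F(D^2u)-F(D^2u,\cdot)-G(D^2u,Du,u,\cdot)$ is small in $L_{d+1}(Q_r)$ by \eqref{7.30.2} (reduced from $|u''|=1$ to general $u''$ by homogeneity and a finite covering of the unit sphere of $\cS$) and by the $\chi$-smallness. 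The ABP bound for $u-\overline u$ yields a Campanato-type decay of the mean oscillation of $D^2u$ over shrinking cylinders, which the Fefferman--Stein sharp function theorem converts, for $\theta$ small enough, into $\|D^2u\|_{L_p}\le N\theta^{\gamma}\|D^2u\|_{L_p}+N(\|\bar G\|_{L_p}+\|Du\|_{L_p}+\|u\|_{L_p})$ for some $\gamma>0$; absorbing the first term, interpolating $\|Du\|_{L_p}$, and using $\partial_tu=-F(D^2u,\cdot)-G(\cdot)$ for the time derivative gives the interior estimate. Near $\partial\cD$ one flattens the $C^{1,1}$ boundary (the leading term then acquires Lipschitz --- hence VMO --- dependence on $x$ and the structure is preserved), replaces interior Evans--Krylov by a boundary $C^{1,\alpha}$ estimate for the model problem with zero data on the flat piece, runs the same scheme, and assembles interior and boundary estimates by a partition of unity to obtain \eqref{eq16.01}.

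\emph{Existence.} Here $({\rm H}_3)$ is essential. Mollify $F$, $G$, $\bar G$ and round $\cD$ off to a smooth domain, obtaining smooth, uniformly elliptic problems still satisfying $({\rm H}_1)$--$({\rm H}_3)$ with constants converging to the originals and Assumption \ref{assump1} with the same $\theta$. For each regularized problem, convexity of $F+G$ in $u''$ makes the parabolic Evans--Krylov interior estimate and Krylov's boundary $C^{2,\alpha}$ estimate available; together with the $C^0$, $C^1$ a priori bounds and Schauder theory for the (now Hölder-coefficient) linearized equations, the method of continuity in Hölder spaces, anchored at $\lambda=0$ by the heat equation, produces a classical solution $u^{(n)}$. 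The estimate \eqref{eq16.01} is uniform in $n$, so $\{u^{(n)}\}$ is bounded in $W^{1,2}_p(\cD_T)$; applying the parabolic ABP estimate to $u^{(n)}-u^{(m)}$ --- whose equation has bounded measurable leading coefficients, by convexity of the regularized operators, and right-hand side tending to $0$ in $L_p$ --- shows $\{u^{(n)}\}$ is Cauchy in $C(\overline{\cD_T})$. Its uniform limit $u$ is also the weak $W^{1,2}_p$-limit; by stability of $L_p$-viscosity solutions under uniform convergence it is an $L_p$-viscosity solution of \eqref{7.29.1}, and since $u\in W^{1,2}_p(\cD_T)$ with $p>d+1$ has a.e.\ a second-order parabolic Taylor expansion, it solves \eqref{7.29.1} a.e.\ with $u-g\in\WO^{1,2}_p(\cD_T)$; \eqref{eq16.01} passes to the limit by weak lower semicontinuity of the norm.

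\emph{Main obstacle.} The technical core is the a priori $W^{1,2}_p$ estimate, and inside it the near-boundary estimate: since $\partial\cD$ is only $C^{1,1}$, the model problem has merely $C^{1,\alpha}$ --- not $C^{2,\alpha}$ --- regularity up to the boundary, so the boundary mean oscillation estimate must be extracted at this weaker level while the nonlinear term $G$, with only sublinear growth $\chi(|u''|)|u''|$ in the Hessian, is absorbed throughout; a companion subtlety is the passage from \eqref{7.30.2}, stated for $|u''|=1$, to a bound on $F(\cdot,t,x)-\bar F$ usable for all $u''$ and uniform over the sphere of $\cS$. Once this and the classical solvability of the smooth approximations are in hand, the continuity method and the limit passage are routine.
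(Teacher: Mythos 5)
Your outline follows the paper's general architecture (reduce to $g\equiv 0$, freeze the coefficients on small cylinders and compare with the constant-coefficient model problem, Fefferman--Stein, an ABP-type bound for $\|u\|_{L_p}$, existence by regularization using (H$_3$), uniqueness by linearization), but the step that actually carries the Hessian estimate is missing. You control the comparison difference $u-\overline u$ only through ABP. ABP bounds $\sup|u-\overline u|$, not $D^2(u-\overline u)$, and a sup bound on the difference gives no decay of the mean oscillation of $D^2u$; the Campanato/sharp-function iteration does not close this way. The mechanism the paper uses --- announced in the introduction as one of its two key ingredients and absent from your sketch --- is the Lin-type $W^2_\gamma$ estimate for linear equations with measurable coefficients (Lemmas \ref{plin} and \ref{plin2}): writing $w=\hat u-v$, one finds $L\in\bL_\delta$ with $\partial_t w+Lw=f$ and bounds $\dashint|D^2w|^\gamma$ by the $L_{d+1}$ norm of $f$; only this, together with the $C^{2+\alpha}$ estimate for $v$, yields the oscillation estimates of Lemmas \ref{3.8.1} and \ref{lemma 3.21.3}. (If what you intend instead is the Caffarelli--Wang touching-paraboloid covering argument, the paper explicitly argues that that route effectively requires $L_\infty$-type smallness of the oscillation and is not formally applicable under Assumption \ref{assump1}.) Relatedly, $|\bar F(D^2u)-F(D^2u,\cdot)|\le h\,|D^2u|$ with $h$ small only in $L_1$-mean, so its smallness in $L_{d+1}$ requires the higher-integrability H\"older trick with exponents $\beta,\beta'$ as in Lemmas \ref{pposc} and \ref{pposc2}, not just \eqref{7.30.2}.

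Your boundary step also rests on a wrong premise: after flattening, the model problem is the constant-coefficient convex equation with zero data on a \emph{flat} boundary, for which $C^{2+\alpha}$ estimates up to the boundary do hold (Lemma \ref{3.21.2} via Theorem 5.5.2 of \cite{Kr87}; elliptic case Lemma \ref{3.6.3} via Safonov); the $C^{1,1}$ regularity of $\partial\cD$ only enters through the flattened coefficients, which are absorbed into the VMO framework. Running the scheme ``at the $C^{1,\alpha}$ level'' as you propose would not control $D^2u$ near the boundary. You also lose the stated $T$-independence of $N$: passing through $\|u\|_{L_\infty}\le N\|\bar G\|_{L_{d+1}(\cD_T)}\le N\|\bar G\|_{L_p(\cD_T)}$ introduces $|\cD_T|^{1/(d+1)-1/p}$, which grows with $T$; the paper removes this by dividing by the barrier $v_0$ of Lemma 11.1.2 of \cite{Kr08}, creating a zeroth-order coefficient bounded below and giving $\|u\|_{L_p(\cD_T)}\le N\|G(D^2u,0,0,\cdot,\cdot)\|_{L_p(\cD_T)}$ with $N$ independent of $T$. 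Finally, in the existence step the claim that $u^{(n)}$ is Cauchy in $C(\overline{\cD_T})$ because the right-hand side for $u^{(n)}-u^{(m)}$ tends to $0$ in $L_p$ is unjustified: $(F_n-F_m)(D^2u^{(m)},\cdot)$ is only dominated by $\varepsilon_{n,m}(x)|D^2u^{(m)}|$ with $\varepsilon_{n,m}\to0$ a.e., not uniformly, and $D^2u^{(m)}$ is merely bounded in $L_p$; the paper instead extracts a uniformly convergent subsequence and passes to the limit using the monotone envelopes $\bar F_{n_0},\underline F_{n_0}$ and Theorems 3.5.15 and 3.5.6 of \cite{Kr87}.
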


Here $W^{1,2}_p(\cD_T)$ denotes the set of
functions $v$ defined on
$\cD_T$ such that $v$, $Dv
 $, $D^2v $, and $\partial_t v$
are in
$L_p(\cD_T)$, and $\WO^{1,2}_p(\cD_T)$ is the set of all functions
$v\in
W^{1,2}_p(\cD_T)$ such that $v$ vanishes on $\partial' \cD_T$.

If $F$ and $G$ are independent of $t$, we also
   consider elliptic
    equations
\begin{equation}
                                                \label{7.29.2}
 F(D^{2}u( x), x)+G(D^{2}u( x),D u( x),u( x), x)=0
\end{equation}
in subdomains of $\bR^{d}$ with Dirichlet boundary condition.
In that case Assumption \ref{assump1} becomes the
following.

\begin{assumption}
                                \label{assump2}
There exists $R_0\in(0,1]$ such that, for any
 $r\in (0, R_0]$ and $z\in \cD$,
one can find a function $\bar{F}(u'')$ (independent of $x$)
satisfying condition $({\rm H}_{1})$ and such that
for any $u''\in\cS$ with $|u''|=1$ we have
\begin{equation}
                                            \label{7.28.1}
\int_{B_{r}( z)}
|F(u'' , x)-\bar{F}(u'' )|\, dx \leq \theta
r^{ d} .
\end{equation}

\end{assumption}

Our next theorem is about the boundary value problem
for elliptic  equations with VMO coefficients
in bounded domains.

\begin{theorem}
                                    \label{mainthm2}
Let $p>d$ be a constant and
$\cD$ be a bounded  $C^{1,1}$  domain in $\bR^d$.
Assume that $\bar G$ is independent of $t$
and $\bar G\in L_{p}(\cD)$.
Then there exists a constant $\theta\in (0,1]$,
depending only on $d$, $p$, $\delta$, and the  $C^{1,1}$
norm of $\partial \cD$, such that if Assumption \ref{assump2}
 is satisfied with this $\theta$, then the following
assertions hold. For any $g\in W^{ 2}_p(\cD)$  there is a unique solution
$u\in W^{2}_p(\cD)$ to \eqref{7.29.2} such that
$u-g\in \WO^{2}_p(\cD)$. Moreover, we have
\begin{equation}
                                \label{eq00.34}
\|u\|_{W^{2}_p(\cD)}\le N\| \bar G\|_{W^{2}_p(\cD)}
+N\|g\|_{W^{2}_p(\cD)}+ N_{0} ,
\end{equation}
where $N$ depends only on $d$, $p$, $\delta$,
$R_{0}$, $K$, the  $C^{1,1}$
norm of $\partial \cD$, and $\text{\rm diam}(\cD)$
and $ N_{0} $ depends only on the same objects and
$\chi$. In particular, $ N_{0} =0$ if $\chi\equiv0$.
\end{theorem}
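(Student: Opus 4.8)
The plan is to deduce the elliptic Theorem~\ref{mainthm2} from the parabolic Theorem~\ref{mainthm1} by viewing a stationary solution as a time-independent solution of the parabolic problem, together with an independent argument for the extra integrability that the elliptic scale $p>d$ (rather than $p>d+1$) affords. More precisely, given $g\in W^2_p(\cD)$ with $\bar G\in L_p(\cD)$ and $p>d$, first treat the case $p>d+1$: extend $g$ and all the data trivially in the $t$ variable, apply Theorem~\ref{mainthm1} on $\cD_T$ for each $T$ to get a parabolic solution $u_T$, and show via the a priori bound \eqref{eq16.01} (whose constant $N$ is independent of $T$ and whose $N_0$ depends on $T$ and $\chi$ but vanishes when $\chi\equiv 0$) that the $u_T$ are bounded in $W^{1,2}_{p,\mathrm{loc}}$; then pass to a limit as $T\to\infty$, using interior and boundary parabolic estimates and uniqueness to conclude that the limit is independent of $t$ and solves \eqref{7.29.2}. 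Uniqueness of the elliptic solution follows from the comparison principle implicit in $({\rm H}_1)$--$({\rm H}_2)$ (the usual Aleksandrov--Bakelman--Pucci / maximum principle argument for the difference of two solutions, which satisfies a linear uniformly elliptic inequality with bounded lower-order coefficients and vanishing boundary data).

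Next I would remove the restriction $p>d+1$, i.e. cover the full range $p>d$. The standard device here is a bootstrap: once we have a solution $u\in W^2_q(\cD)$ for some $q>d$ (in particular $u\in C^{1,\alpha}$ by Sobolev embedding, so $Du$ and $u$ are bounded), we may regard the lower-order term $G(D^2u,Du,u,x)$ as producing, together with $\bar G$, a right-hand side that is already in $L_p$ for every $p$ for which $\bar G\in L_p$, because $\chi(|D^2u|)|D^2u|\le \chi(0)|D^2u|$ is controlled and the remaining terms $K(|Du|+|u|)$ are bounded. Then the interior and global $W^2_p$ estimates for the \emph{linear} equation obtained by freezing, of the type $a^{ij}D_{ij}u=f$ with VMO $a^{ij}$ (available once $\theta$ is small, by the Chiarenza--Frasca--Longo / Krylov theory), upgrade $u$ from $W^2_q$ to $W^2_p$. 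Running this between $p$ and some auxiliary exponent $>d+1$ where Theorem~\ref{mainthm1} applies gives existence in $W^2_p$ for all $p>d$; the estimate \eqref{eq00.34} then follows by combining the elliptic version of the a priori estimate with this regularity improvement, tracking the dependence of constants exactly as stated.

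An alternative, and probably cleaner, route is to prove Theorem~\ref{mainthm2} directly by the same method used for Theorem~\ref{mainthm1} but in the elliptic setting: establish the elliptic counterpart of the a priori estimate \eqref{eq16.01} under $({\rm H}_1)$--$({\rm H}_2)$ (interior and boundary $C^{1,\alpha}$ and $W^2_p$ estimates for the Bellman/Pucci extremal operators with VMO coefficients, plus the ABP bound to control $\|u\|_{L_\infty}$ by $\|\bar G\|_{L_d}$ and $\|g\|$), then use $({\rm H}_3)$ together with the method of continuity: connect $F+G$ to the model operator $\bar F$ along the family $F_\lambda+G_\lambda$ preserving $({\rm H}_1)$--$({\rm H}_3)$, show the set of $\lambda\in[0,1]$ for which the boundary value problem is solvable in $W^2_p(\cD)$ is nonempty, open, and closed. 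Openness uses the linearized operator (which, by $({\rm H}_3)$, is uniformly elliptic with VMO principal part and bounded lower-order coefficients, hence invertible on $\WO^2_p$) and the implicit function theorem or a fixed-point argument; closedness uses the uniform a priori estimate and weak compactness in $W^2_p$ together with the stability of viscosity/strong solutions under such limits. Since $F$ and $G$ do not depend on $t$ here, every parabolic ingredient invoked for Theorem~\ref{mainthm1} has an elliptic analogue with $d+1$ replaced by $d$ throughout, and no genuinely new estimate is needed.

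The main obstacle is the passage across the threshold from $p>d+1$ (where the parabolic machinery, and in particular the key interior estimate and the $C^{1,\alpha}$ gain, is set up) down to $p>d$ in the elliptic case: one must make sure the bootstrap does not lose the $t$-independence and that the VMO smallness constant $\theta$ can be chosen uniformly in the required $p$-range, and one must verify that the linearized-operator argument in the method of continuity genuinely produces an $L_p$ right-hand side — this is exactly where $({\rm H}_2)$'s structural bound with the decaying $\chi$, rather than a crude linear growth in $|u''|$, is essential, and where the dependence of $N_0$ on $\chi$ (and its vanishing for $\chi\equiv0$) has to be tracked with care.
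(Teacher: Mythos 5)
Your primary route has a genuine gap at exactly the point you flag as "the main obstacle": the passage from $p>d+1$ down to $p>d$. For a target exponent $p\in(d,d+1]$ the data $\bar G$ is only assumed to lie in $L_p(\cD)$, so Theorem \ref{mainthm1} (which needs $\bar G\in L_q(\cD_T)$ for some $q>d+1$) cannot be invoked at all, even as a starting point; to make your approximation-plus-bootstrap work you would need an a priori $W^2_p$ bound for $p\in(d,d+1]$, and your proposed source for it fails. The "frozen" linear equation $a^{ij}(x)D_{ij}u=f$ produced from $F(D^2u(x),x)$ has coefficients that depend on $D^2u(x)$ and are therefore merely \emph{measurable}; Assumption \ref{assump2} gives VMO regularity of $F(u'',\cdot)$ at each fixed $u''$, and this does not transfer to the composed coefficients. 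Hence the Chiarenza--Frasca--Longo/linear VMO theory is not applicable, and indeed if it were, the entire apparatus of the paper (Evans--Krylov for the constant-coefficient model, Lin's $W^2_\varepsilon$ estimate for measurable coefficients, and the sharp-function argument) would be superfluous. The first step of your route also has unresolved points: in \eqref{eq16.01} the constant $N_0$ depends on $T$ (in the proof it grows like $T^{1/p}$ when $\chi\not\equiv0$), so the bounds on $u_T$ are not uniform as $T\to\infty$, and the $t$-independence of the limit is asserted rather than proved; proving it essentially requires the uniqueness/comparison theory for the stationary problem, which is what is being established.

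Your "alternative, probably cleaner" route is in fact close to what the paper actually does: the paper develops a self-contained elliptic theory (Sections \ref{sec2}--\ref{SecproofThm2}) with $d+1$ replaced by $d$ --- half-space estimates for constant-coefficient Bellman operators via Safonov's boundary $C^{2+\alpha}$ theorem and Lin's $W^2_\varepsilon$ lemma, a Fefferman--Stein sharp-function argument to absorb the VMO perturbation, boundary flattening and a partition of unity to pass to $\cD$, and the Aleksandrov estimate (using that $G$ is nonincreasing in $u$, so $c\ge0$) to control $\|u\|_{L_p(\cD)}$, with the $\chi$-structure of $({\rm H}_2)$ used twice to absorb $\chi(s)\|D^2u\|_{L_p}$. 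However, the existence step in the paper is not a method of continuity with an invertible linearized operator on $\WO^2_p(\cD)$ --- that invertibility is again unavailable because the linearization along two $W^2_p$ functions has only measurable coefficients. Instead the paper mollifies $F$, $G$, and $g$ in $x$, exhausts $\cD$ by smooth domains, solves the smoothed problems classically in $C^{2}$ using $({\rm H}_2)$--$({\rm H}_3)$, and passes to the limit with the a priori estimate together with a sandwiching argument (as in Theorem \ref{theorem 8.5.1}) based on $\sup_{n\ge n_0}F_n$ and $\inf_{n\ge n_0}F_n$ and the stability theorems for linear equations with measurable coefficients. So the skeleton of your alternative is right, but "no genuinely new estimate is needed" understates matters: the elliptic half-space/whole-space VMO estimates and the limiting existence argument are the content, and your openness step would need to be replaced by something like the paper's approximation scheme.
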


Here $W^{2}_p(\cD)$ denotes the set of all
functions $v$ defined in $\cD_T$ such that $v$, $Dv$, and $D^2v$ are in
$L_p(\cD)$, and $\WO^{ 2}_p(\cD_T)$ is the set of all functions $v\in
W^{2}_p(\cD)$ such that $v$ vanishes on $\partial \cD$.

In the literature, the interior $W^2_p, p>d$ estimates for a
class of fully nonlinear uniformly elliptic
equations of the form
$$
F(D^2u,x)=f(x)
$$
were first obtained by Caffarelli in \cite{Caf89} (see also \cite{CC95}). His proof is
geometric and is  based on  the Aleksandrov--Bakel'man--Pucci a
priori estimate, the Krylov--Safonov Harnack inequality and
 a covering argument which can be found
in \cite{KS80} and \cite{Sa80}.
 Adapting this
technique, similar interior estimates were proved by Wang \cite{Wa92} for
parabolic equations. In the same paper, a boundary estimate is stated but
without a proof; see Theorem 5.8 there.  By exploiting a weak reverse
H\"older's inequality, the result of \cite{Caf89} was sharpened by
Escauriaza in \cite{Es93}, who obtained the interior $W^2_p$-estimate for
the same equations allowing $p>d-\varepsilon$, with a small constant
$\varepsilon$ depending only
on the ellipticity constant and $d$.
Very recently, Winter \cite{Wi09} further extended this technique to establish
the corresponding boundary estimate as well as the
{\em $W^2_p$-solvability\/} of
the associated boundary-value problem. It is also worth noting that a
solvability theorem in the space $W^{1,2}_{p,\text{loc}}(Q)\cap C(\bar
Q)$ can be found in \cite{CKS00} for the boundary-value problem for fully
nonlinear parabolic equations. In these papers, a small oscillation
assumption in the integral sense is imposed on the operators; see, for
instance,
\cite[Theorem 1]{Caf89}. However, as pointed out in
\cite[Remark 2.3]{Wi09} and in \cite{Kr10}
(see also \cite[Example 8.3]{CKS00} for a relevant discussion),
this assumption turns out to be equivalent to a small oscillation condition in the $L_\infty$ sense,
which, particularly in the
{\em linear\/} case, is the same as what is required in the
classical $L_{p}$ theory based on the Calder\'on--Zygmund
estimates. Thus, it seems to us that the results in
\cite{Caf89,Wa92,Es93,CKS00,Wi09} mentioned above are in general not
formally applicable
to the operators under Assumption
\ref{assump1} or
\ref{assump2}, in which local oscillations are measured in the average sense so
that huge jumps in the $L_\infty$ norm are allowed.
It is still possible that the {\em methods\/} developed in the above
cited articles can be used to obtain our results. In our opinion,
our method is somewhat simpler and leads to the results faster.

The results obtained in this article  contain and
generalize the Sobolev space theory of linear equations
 with VMO coefficients, which was developed about twenty years ago  by Chiarenza,
Frasca,
and Longo in  \cite{CFL1,CFL2} for non-divergence form elliptic equations, and
later in \cite{BC93} by Bramanti and Cerutti for parabolic equations. The proofs
in these references are based on the Calder\'on--Zygmund theorem and the
Coifman--Rochberg--Weiss commutator theorem. For further related results, we
refer the reader to the book \cite{MaPaSo00} and reference therein.

However, remarkably not all known results related to VMO coefficients
and second-order elliptic and parabolic {\em linear\/} equations
can be obtained from the results of the present article.

The reader can find in \cite{Krylov_2005,Krylov_2007_mixed_VMO}  a
unified approach to investigating the $L_p$ (and $L_q-L_p$) solvability of both
divergence and non-divergence form parabolic and elliptic equations with
leading coefficients that are in VMO in the spatial
variables and only {\em
measurable\/} in the time variable in the parabolic
case. In the nonlinear setting, it is an extremely challenging problem
whether or not one can treat $F$'s which are only measurable in $t$.
The proofs in \cite{Krylov_2005,Krylov_2007_mixed_VMO} rely mainly
 on pointwise estimates of sharp
functions of spatial derivatives of solutions, so that
VMO coefficients are treated in a rather straightforward manner.
This approach is rather flexible: it has been
applied to both divergence and non-divergence form
linear equations/systems with coefficients which are very
irregular in some of the independent variables. For example, in \cite{KK1,KK2} Kim and Krylov
established the solvability in Sobolev spaces of non-divergence elliptic and parabolic equations
with leading coefficients measurable in a space variable and VMO in the other variables; in \cite{DK10} Dong and Kim considered both divergence and non-divergence form higher-order elliptic and parabolic systems in the whole space, the half space and bounded domains with coefficients in the same class as in \cite{Krylov_2005,Krylov_2007_mixed_VMO}; see also the references in \cite{DK10} for other results in this line of research.

Here we follow the general scheme in
\cite{Krylov_2005,Krylov_2007_mixed_VMO}
to study fully nonlinear elliptic and parabolic
equations in bounded domains or cylinders with VMO coefficients. This
article is a continuation of \cite{Kr10}, in which interior estimates for
elliptic Bellman's equations were obtained. The key ingredients in our
proofs are the Evans--Krylov theorem applied to homogeneous equations
with constant coefficients and a $W^2_\varepsilon$ estimate for
elliptic equations with measurable coefficients,
which is originally due to
 F.H. Lin
\cite{Li86} and extended to the parabolic case
in \cite{Kr10}. We also
remark that as in \cite{Es93,CKS00,Wi09}, by making use of a refined
Aleksandrov--Bakel'man--Pucci estimate instead of the classical estimate,
one can extend the range of $p$ in our results to $p>d-\varepsilon$ in
the elliptic case and to $p>d+1-\varepsilon$ in the parabolic case, where
$\varepsilon$ is a small constant depending only on $d$ and $\delta$.
These ranges are sharp, as is seen from
the examples in Section I.2 of \cite{LU}.

\begin{remark}
                                        \label{remark 7.30.1}
A few comments on the structures of \eqref{7.29.1}
and \eqref{7.29.2} are in order. Usually, the last two terms
on the left-hand side of  \eqref{7.29.1}
are combined into one $H=F+G$. However, if
we are given a function $H(u'',u',u,t,x)$, we can always
represent it as
$F+G$ with $F=H(u'',0,0,t,x)-H(0,0,0,t,x)$ and $G=H-F$.
Then usual ellipticity, convexity in $(u_{ij})$,
Lipschitz continuity,  and growth conditions with respect
to $(u'',u',u)$ from the
theory of fully nonlinear equations will transform into our conditions
even with $\chi\equiv0$.
Our form may look more attractive in the sense that no
convexity condition with respect to $ u''$ is
imposed on $G$. The above decomposition of $H$ lacks however
the requirement that $F$ be positive homogeneous of degree one.
Then one defines
$$
\hat{F}(u'',t,x)=\lim_{\lambda\to\infty}\frac{1}{\lambda}F
(\lambda u'',t,x),\quad \hat{G}=F-\hat{F}
$$
and combines $\hat{G}$ with $G$. The fact that $\hat{F}$ is well defined
follows from the Lipschitz continuity and convexity of $F$ in $u''$.
That $\hat{F}$ is  positive homogeneous of degree one is obvious.
Furthermore, for each $(t,x)$, the functions $\frac{1}{\lambda}F
(\lambda u'',t,x)$ are equicontinuous in $u''$, and hence
converge uniformly on compact sets which means exactly that
$$
\chi(u'',t,x):=\frac{1}{|u''|}|\hat{F}(u'',t,x)
-F ( u'' ,t,x)|\to0
$$
as $|u''|\to\infty$.
\end{remark}

\begin{remark}
                                        \label{remark 7.30.1b}
There are natural and essentially unique candidates
for the functions $\bar{F}$ in Assumptions
\ref{assump1} and \ref{assump2}. To show them
for a function $f$ defined on a Borel set $\cU \subset \bR^{d+1}$, we set
$$
(f)_{\cU} = \frac{1}{|\cU|} \int_{\cU} f(t,x) \, dx \, dt
= \dashint_{\cU} f(t,x) \, dx \, dt,
$$
where $|\cU|$ is the
$d+1$-dimensional Lebesgue measure of $\cU$.
In case $\cU$ is a Borel subset of $\bR^d$,
we define $|\cU|$ and
$(f)_{\cU}$ in a similar way.
The reader understands that if
$f$ also depends on $u''$: $f(u'',t,x)$, then after
averaging with respect to $(t,x)$ we will get
the result depending on $u''$ as well, which
we denote $(f)_{\cU}(u'')$.
Now it is easy to see that if \eqref{7.30.2} holds
with an $\bar{F}$, then it also holds with
$$
\bar{F}(u'')=(F)_{Q_{r}(\tau,z)}(u'')
$$
provided that we multiply the right-hand side of
\eqref{7.30.2} by  a constant depending only on $d$.
Thus defined $\bar{F}(u'')$ satisfies $({\rm H}_{1})$
as long as $F$ does.

\end{remark}

\begin{remark}
                                        \label{remark 7.29.2}

A typical example when it is relatively easy
to verify our hypotheses is given by the following
Bellman's equation:
$$
\partial_t u(t,x) + \sup_{\omega\in \Omega} [a^{ij}(\omega,t,x)
 D_{ij} u(t,x)+b^{i }(\omega,t,x)D_{i}u(t,x)
$$
\begin{equation}
                                                \label{7.14.2}
-c(\omega,t,x)u(x)    + f(\omega, t, x)]=0,
\end{equation}
  where
   the set $\Omega$ is
 a separable metric space, $a =(a^{ij} )$,
$b =(b^{i} )$, $c\geq0 $, and $f $
are given functions which are measurable in $(t,x)$ for
 each $\omega\in \Omega$
 and  continuous in
$\omega$ for each $(t,x)$.

 As usual, the summation convention is
enforced throughout the article and the summation in   \eqref{7.14.2}
and in similar situations is performed before the supremum is taken.
Equations of that type appear in many applications  and,
in particular, in the theory of
optimal control  of diffusion type processes they are the
so-called Bellman's equations.

Introduce,
$$
F(u'',t,x)=\sup_{\omega\in \Omega}  a^{ij}(\omega,t,x)u''_{ij},
\quad G(u'',u',u,t,x)
$$
$$
=\sup_{\omega\in \Omega} [a^{ij}(\omega,t,x)
u''_{ij}+b^{i }(\omega,t,x)u'_{i}
-c(\omega,t,x)u    + f(\omega, t, x)]-F(u'',t,x)
$$
and assume that for any $\omega$ the function $a^{ij}(\omega,t,x)u''_{ij}$
satisfies $({\rm H}_{1})$ and the function
$b^{i }(\omega,t,x)u'_{i}
-c(\omega,t,x)u    + f(\omega, t, x)$ satisfies $({\rm H}_{2})$.  Then
$F$  and $G$ satisfy (${\rm H}_1$)-(${\rm H}_3$)  with
$\chi\equiv 0$.

One can give several conditions in terms
of $a^{ij}$, which are sufficient for
\eqref{7.30.2} to hold. For instance, \eqref{7.30.2} is satisfied  if for
any
$r\in(0,R_{0}]$, $t\in\bR$, and $z\in \cD $ one can find
functions $\bar{a}^{ij}(\omega)$ such that the functions
$\bar{a}^{ij}(\omega)u''_{ij}$ satisfy $({\rm H}_{1})$ and for any
$u''\in\cS$ with $|u''|=1$
$$
\int_{Q_{r}(\tau,z)}
|\sup_{\omega}a^{ij}(\omega,t,x)u''_{ij}
-\sup_{\omega}\bar{a}^{ij}(\omega)u''_{ij}|\,dx\,dt
\leq \theta
r^{ d+2}
$$
or, since the difference of supremums is less than the supremum
of the absolute values of the differences, if for all $i,j$
\begin{equation}
                                                       \label{8.5.2}
\int_{Q_{r}(\tau,z)}
\sup_{\omega}| a^{ij}(\omega,t,x)-
 \bar{a}^{ij}(\omega) |\,dxdt
\leq \theta r^{ d+2}.
\end{equation}
In addition, if $\Omega$ is a finite set, then one can
drop the last
supremum and require the condition to hold for each $\omega$.
As in Remark \ref{remark 7.30.1b}, the latter condition
holds with some $\bar{a}$ if and only if it holds
(with slightly modified right-hand side) with
$\bar{a}=a_{Q_{r}(\tau,z)}$.
\end{remark}

The remainder of the article is organized as follows. We consider
elliptic equations in the half space with constant
coefficients in Section \ref{sec2} and with VMO coefficients in Section
\ref{sec3}. With these preparations, the proof of Theorem
\ref{mainthm2} is given in Section \ref{SecproofThm2}.
Then we turn to parabolic  equations in the whole space
with constant coefficients in Section \ref{sec4} and with VMO coefficients in
Section \ref{sec5}, as well as parabolic   equations in the half space in
Sections \ref{sec6} and
\ref{sec7}. Finally, the proof of Theorem \ref{mainthm1} is presented at the end
of Section \ref{sec7}.  The reader may notice that
we could have somewhat shortened the article by deriving some results
for elliptic equations from their parabolic counterparts.
We do not do that because it is much easier and shorter to explain
the main ideas in the elliptic case.

A few times in the article we will be using
known results from $C^{2+\alpha}$ theory of elliptic and parabolic
fully nonlinear equations. Part of these results is proved for $H$
concave in $u''$ and part for convex $H$. The reader understands
that   results for concave $H$ are also applicable
for equations with convex $H$ since the transformation $H(u'')
\to-H(-u'')$ changes the direction of convexity and does not affect
the ellipticity condition.

\mysection{Elliptic equations
with constant coefficients in $\bR^{d}_{+}$}
                            \label{sec2}

First we introduce a few more notation.
Set $\bR^{d}_{ +}=\{x\in\bR^{d}:x^{1}>0\}$.
For $r>0$ and $x=(x^1, x')\in \bR^{d}_{ +}$, denote
$$
  B_r=B_r(0), \quad B_r(x^1)=B_r(x^1,0),
$$
$$
B_r^{+}(x)=B_r(x)\cap \bR^{d}_{+}, \quad B_r^{+}= B_r^{+}(0),
\quad B_r^{+}(x^1)=B_r^{+}(x^1,0).
$$
Recall that by $Du$ and $D^{2}u$
we denote the gradient and the
Hessian of
$u$, respectively.

In this section, we are interested in the equation
\begin{equation}                                     \label{3.6.1}
F(D^{2}u )=f(x),
\end{equation}
in the half space $\bR^d_+$ with $F=F(u'',x)$ independent of $x$.
Since $F$ is convex and positive homogeneous of degree one,
it has a representation as in Remark \ref{remark 7.29.2},
so that we are dealing with Bellman's equations.

\begin{lemma}
                     \label{3.6.2}
For any $u\in  W^2_d(B_r^+) $  vanishing on $x^1=0$,
we have
$$
\sup_{B_r^{+}} |u(x)- x^{1}(D_{1}u )_{B_r^{+}}|^d\leq N
 r^{2d}\dashint_{B_r^{+}} |D^2u|^d \,dx,
$$
where $N$ depends only on $d$.
\end{lemma}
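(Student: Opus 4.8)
The plan is to reduce the estimate to a Poincaré-type inequality on the half-ball, treating $u(x)-x^1(D_1u)_{B_r^+}$ as a function that vanishes on the flat part of the boundary and whose normal derivative has mean zero there in an averaged sense. First I would rescale: by replacing $u(x)$ with $r^{-2}u(rx)$ we may assume $r=1$, since both sides of the claimed inequality scale the same way ($\sup$ picks up $r^2$, so the $d$-th power picks up $r^{2d}$, matching the factor $r^{2d}$ on the right after the averaging normalization absorbs the $|B_1^+|$-volume). So it suffices to prove
$$
\sup_{B_1^+}|v(x)|^d \le N\int_{B_1^+}|D^2u|^d\,dx,\qquad v(x):=u(x)-x^1(D_1u)_{B_1^+}.
$$

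Next I would observe that $v$ vanishes on $\{x^1=0\}\cap B_1$ (because $u$ does and $x^1=0$ there), and that $D_{11}v=D_{11}u$, $D_{1j}v=D_{1j}u$ for $j\ge 2$, while $D_{ij}v=D_{ij}u$ for $i,j\ge 2$; in particular $|D^2v|=|D^2u|$ pointwise. The key structural fact is that $D_1v$ has zero mean over $B_1^+$: indeed $(D_1v)_{B_1^+}=(D_1u)_{B_1^+}-(D_1u)_{B_1^+}=0$. Now I want to bound $\sup|v|$ by $\|D^2u\|_{L_d(B_1^+)}$. The route I would take is via the gradient: first control $\|Dv\|_{L_d}$ in terms of $\|D^2v\|_{L_d}$, then control $\sup|v|$ in terms of $\|Dv\|_{L_d}$ together with the boundary vanishing. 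For the first step, apply the Poincaré inequality on $B_1^+$ to $D_1v$ (which has mean zero) to get $\|D_1v\|_{L_d(B_1^+)}\le N\|D(D_1v)\|_{L_d}\le N\|D^2u\|_{L_d}$; for the tangential components $D_jv$, $j\ge 2$, these vanish on the flat boundary piece (since $v$ does), so the Poincaré inequality with zero boundary trace on part of $\partial B_1^+$ gives $\|D_jv\|_{L_d(B_1^+)}\le N\|D(D_jv)\|_{L_d}\le N\|D^2u\|_{L_d}$. Hence $\|Dv\|_{L_d(B_1^+)}\le N\|D^2u\|_{L_d(B_1^+)}$. For the second step, since $v$ vanishes on a boundary portion of positive measure, the Sobolev embedding $W^1_d(B_1^+)\hookrightarrow$ (functions of bounded mean oscillation / Morrey-type borderline) is not quite $L_\infty$, so I would instead go one more derivative: use $v\in W^2_d$ with $d$ being the critical exponent for $W^2_d\hookrightarrow C(\bar B_1^+)$ in dimension $d$. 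Concretely, $W^2_d(B_1^+)\hookrightarrow C^{0}(\bar B_1^+)$ continuously (this is the borderline Sobolev embedding, valid since $2d>d$), giving $\sup_{B_1^+}|v|\le N\|v\|_{W^2_d(B_1^+)}=N(\|v\|_{L_d}+\|Dv\|_{L_d}+\|D^2v\|_{L_d})$. Combining with the gradient bound and then a Poincaré bound $\|v\|_{L_d}\le N\|Dv\|_{L_d}$ (again using the zero boundary trace on the flat piece), everything collapses to $N\|D^2u\|_{L_d(B_1^+)}$, which is the desired inequality; undoing the scaling restores the factor $r^{2d}$ and the averaging normalization.

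The main obstacle I expect is making the borderline embedding and the Poincaré inequalities clean on the half-ball $B_1^+$ rather than a smooth domain: $B_1^+$ has a corner along $\partial B_1\cap\{x^1=0\}$, so one must either use that $B_1^+$ is still a Lipschitz (extension) domain — which is enough for $W^2_d\hookrightarrow C(\bar B_1^+)$ and for Poincaré — or avoid the issue by first extending $v$ evenly/oddly across $\{x^1=0\}$ to a function on the full ball $B_1$ (odd extension is natural since $v=0$ there), noting that the odd extension of a $W^2_d$ function vanishing on $\{x^1=0\}$ is again $W^2_d$ on $B_1$ with comparable norm, and then applying the standard embedding and Poincaré–Wirtinger inequalities on the ball. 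I would take the odd-extension route to keep all constants depending only on $d$. A secondary technical point is that the mean-zero property I need for the Poincaré step is for $D_1v$, whereas the odd extension turns $D_1v$ into an even function, so one should either apply Poincaré to $D_1v$ directly on $B_1^+$ (where it has mean zero by construction) before extending, or simply note that the full claim follows from $\|Dv\|_{L_d(B_1^+)}\le N\|D^2u\|_{L_d(B_1^+)}$ plus the $C(\bar B_1^+)$ embedding applied to $v$ extended oddly, with the mean-zero choice of the constant $(D_1u)_{B_1^+}$ being exactly what makes the first inequality hold.
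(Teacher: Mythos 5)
Your proof is correct, but it takes a different route from the paper. The paper's argument is a two-line reduction: extend $u$ oddly across $\{x^1=0\}$ (citing Lemma 8.2.1 of \cite{Kr08} for the fact that the odd extension stays in $W^2_d$), observe that $(\tilde u)_{B_r}=0$, $(D_1\tilde u)_{B_r}=(D_1u)_{B_r^+}$ and $(D_i\tilde u)_{B_r}=0$ for $i\ge 2$, and then invoke the known full-ball estimate (Lemma 2.1 of \cite{Kr10}), i.e.\ $\sup_{B_r}|\tilde u-(\tilde u)_{B_r}-x^i(D_i\tilde u)_{B_r}|^d\le Nr^{2d}\dashint_{B_r}|D^2\tilde u|^d\,dx$, which for the odd extension is exactly the claimed inequality. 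You instead work directly on the half-ball and essentially reprove the content of that cited lemma in the boundary setting: the mean-zero normalization of $D_1v$ feeds a Poincar\'e--Wirtinger inequality, the vanishing of $v$ (hence of its tangential derivatives) on the flat piece feeds zero-trace Poincar\'e inequalities, and the supremum is recovered from the supercritical embedding $W^2_d\hookrightarrow C(\bar B_1^+)$ (valid on the Lipschitz half-ball, or via odd extension), with scaling restoring the factor $r^{2d}$. Your version is more self-contained and makes the mechanism explicit, including the correct observation that one must use the mean-zero property of $D_1v$ on the half-ball before extending (the odd extension makes $D_1v$ even, not mean-zero); the paper's version is shorter because the odd extension converts all the normalizations into exactly the averages appearing in the already-available interior lemma. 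One cosmetic remark: the embedding you use is not borderline but supercritical ($kp=2d>d$), which is why the $C(\bar B_1^+)$ bound with a dimensional constant is unproblematic.
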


\begin{proof}
Let  $\tilde u$ be the odd extension of $u$ with respect to $x^1$, i.e.,
 $\tilde {u}(x^1,x'):= u(|x^1|,x')  \text{sgn}(x^1)$. By Lemma 8.2.1
in \cite{Kr08}, $\tilde{u}\in W^{2}_{d}$.  Note that
$$
( \tilde u )_{B_r}=0, \quad ( D_{1}\tilde{u} )_{B_r}=
(D_{1}u )_{B_r^{+}},\quad ( D_{i}\tilde{u}  )_{B_r}=0 \quad \text{for} \quad i\geq 2.
$$
 The lemma then  follows from Lemma 2.1 of \cite{Kr10}.
\end{proof}

\begin{lemma}
                 \label{3.6.3}
Let $r\in(0,\infty)$, $\kappa\geq2$ and let
$v\in C (\bar{B}_{\kappa r}^+)
\cap C^{2}_{b}(B^{+}_{\kappa\rho})$ for any $\rho\in(0,r)$. Assume that
$v$ is a solution of  \eqref{3.6.1}  in $  B_{\kappa r}^+ $ with
$f\equiv 0$ and $v=0$ on $x^1=0$. Then there are
constants
$\alpha \in (0,1)$ and N, depending only on $d$ and $\delta$, such that
$$
[ D^2v]_{C^\alpha(B^+_r)}\le N
(\kappa r)^{-2-\alpha} \sup_{\partial B_{\kappa r}^{+}}|v|.
$$

\end{lemma}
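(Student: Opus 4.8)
The plan is to combine interior boundary regularity for the constant-coefficient equation \eqref{3.6.1} with the standard rescaling and maximum-principle argument. First I would normalize: by replacing $v(x)$ by $v(\kappa r x)/(\kappa r)^2$ we may assume $\kappa r = 1$, so $v$ solves $F(D^2v)=0$ in $B_\kappa^+$ with $v=0$ on $\{x^1=0\}$ and the claim becomes $[D^2v]_{C^\alpha(B_1^+)}\le N\sup_{\partial B_\kappa^+}|v|$. Since $F$ is convex and positive homogeneous of degree one, by Remark \ref{remark 7.29.2} (or the discussion at the end of Section 1) it is a Bellman operator, hence $-F(-u'')$ is concave and we may appeal to the Evans--Krylov theorem. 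The odd reflection $\tilde v(x^1,x')=v(|x^1|,x')\,\mathrm{sgn}(x^1)$ across $\{x^1=0\}$ is the natural device to convert the boundary estimate into an interior one; since $F$ is even-in-sign-symmetric only after the reflection trick is applied carefully, I would instead invoke the known boundary $C^{2,\alpha}$ estimate for concave fully nonlinear equations with zero Dirichlet data on a flat boundary (this is classical, e.g. in the references cited in the remark following the introduction on $C^{2+\alpha}$ theory), which gives directly, for $v$ solving $F(D^2v)=0$ in $B_2^+$ with $v=0$ on $B_2\cap\{x^1=0\}$,
\begin{equation*}
[D^2v]_{C^\alpha(B_1^+)}\le N\,\|v\|_{L_\infty(B_2^+)},
\end{equation*}
with $\alpha=\alpha(d,\delta)$ and $N=N(d,\delta)$.

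Next I would upgrade $\|v\|_{L_\infty(B_2^+)}$ to $\sup_{\partial B_\kappa^+}|v|$. Here the hypothesis $\kappa\ge 2$ is used: $B_2^+\subset B_\kappa^+$, and since $v$ is continuous on $\bar B_\kappa^+$, solves a uniformly elliptic equation in $B_\kappa^+$, and vanishes on the flat part $\{x^1=0\}\cap B_\kappa$ of $\partial B_\kappa^+$, the maximum principle (applied to $F$, which is a sup of linear elliptic operators) gives
\begin{equation*}
\sup_{B_\kappa^+}|v|\le \sup_{\partial B_\kappa^+}|v|,
\end{equation*}
because the part of $\partial B_\kappa^+$ lying on $\{x^1=0\}$ contributes $0$. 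Combining the two displays (with $B_2^+$ in place of $B_\kappa^+$ in the regularity estimate, which is legitimate since $\kappa\ge2$) yields the normalized inequality; undoing the rescaling restores the factor $(\kappa r)^{-2-\alpha}$ in front of $\sup_{\partial B_{\kappa r}^+}|v|$. The one point requiring a little care is the regularity of $v$ needed to apply the Evans--Krylov-type boundary estimate: the hypothesis only gives $v\in C^2_b(B^+_{\kappa\rho})$ for $\rho<r$, not up to $x^1=0$; but one applies the estimate on the slightly smaller ball $B^+_{\kappa\rho}$ (rescaled), where $v$ is a bounded $C^2$ solution with zero boundary data on the flat piece, and then lets $\rho\uparrow r$ using that the constants $\alpha,N$ do not depend on $\rho$ and that $\sup_{\partial B_{\kappa\rho}^+}|v|\le\sup_{B_{\kappa r}^+}|v|\le\sup_{\partial B_{\kappa r}^+}|v|$.

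The main obstacle is simply invoking the correct form of the boundary $C^{2,\alpha}$ estimate for concave (equivalently, after the sign change $F(u'')\to -F(-u'')$, convex) fully nonlinear equations with flat boundary and homogeneous Dirichlet data, and checking that its constant depends only on $d$ and the ellipticity constant $\delta$ — the homogeneity of degree one of $F$ and $F(0)=0$ are what make the scaling clean and the right-hand side $f\equiv 0$ preserved under rescaling. Everything else (rescaling, maximum principle, exhaustion by $\rho\uparrow r$) is routine.
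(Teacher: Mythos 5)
Your ingredients are the same as the paper's: the boundary $C^{2,\alpha}$ estimate of Safonov (Theorems 7.1 of \cite{SA}, \cite{Sa94}) for $F(D^2v)=0$ with zero Dirichlet data on the flat boundary, the maximum principle to pass from $\sup$ over the half-ball to $\sup$ over its boundary, a dilation, and an exhaustion $\rho\uparrow r$. However, your scaling reduction is wrong, and as written the argument proves a strictly weaker inequality. If you substitute $v(\kappa r x)/(\kappa r)^2$, the domain becomes $B_1^+$ (not $B_\kappa^+$) and the target ball becomes $B_{1/\kappa}^+$, so the normalized claim is $[D^2v]_{C^\alpha(B^+_{1/\kappa})}\le N\sup_{\partial B_1^+}|v|$. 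The reduction you actually state (domain $B_\kappa^+$, target $B_1^+$, no factor of $\kappa$) corresponds to normalizing $r=1$, and under that normalization the correct claim is $[D^2v]_{C^\alpha(B_1^+)}\le N\kappa^{-2-\alpha}\sup_{\partial B_\kappa^+}|v|$; dropping $\kappa^{-2-\alpha}$ is not harmless. Indeed, applying the boundary estimate only at unit scale ($B_2^+\to B_1^+$) and then the maximum principle on $B_\kappa^+$ yields $[D^2v]_{C^\alpha(B_1^+)}\le N\sup_{\partial B_\kappa^+}|v|$, and undoing the dilation gives $[D^2v]_{C^\alpha(B_r^+)}\le N r^{-2-\alpha}\sup_{\partial B_{\kappa r}^+}|v|$, which is weaker than the lemma by the factor $\kappa^{2+\alpha}$. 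Your closing sentence that ``undoing the rescaling restores the factor $(\kappa r)^{-2-\alpha}$'' is false for the normalization you used, and the lost decay in $\kappa$ is exactly the point of the lemma: it is what produces the small factor $\kappa^{-\gamma\alpha}$ in Lemma \ref{osc}, so the weaker estimate is useless downstream.

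The repair is short and is what the paper does: apply the boundary $C^{2,\alpha}$ estimate at the scale of the \emph{large} half-ball. With $\kappa r=1$ the cited theorems give $[D^2v]_{C^\alpha(B^+_{1/2})}\le N\sup_{B_1^+}|v|$, and since $\kappa\ge 2$ one has $B_r^+=B^+_{1/\kappa}\subset B^+_{1/2}$, after which the maximum principle and the dilation give the stated inequality with $(\kappa r)^{-2-\alpha}$. Equivalently, in your $r=1$ normalization you must use the estimate from $B_\kappa^+$ down to $B_{\kappa/2}^+\supset B_1^+$, whose constant scales like $N\kappa^{-2-\alpha}$. The rest of your outline (maximum principle, exhaustion in $\rho$, reduction of the concave/convex issue via $F(u'')\to-F(-u'')$) is fine.
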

\begin{proof}
Dilations show that it suffices to prove the inequality for $\kappa r=1$. In this case,
the result follows from Theorems 7.1 of \cite{SA}
or of \cite{Sa94}, which state  that
$$
 [D^2 v ]_{ C^\alpha(B^+_{1/2})} \le N  \sup_{B^{+}_1} |v|.
$$
Due to the maximum principle, the lemma is proved.
\end{proof}

Denote by
$\cS_{\delta}$  the set of symmetric
$d\times d$-matrices $\alpha=(\alpha^{ij})$ satisfying
$$
 \delta|\xi|^2 \leq \alpha^{ij}\xi_i\xi_j \leq \delta^{-1}|\xi|^2,
\quad \forall \xi\in\bR^{d} .
$$
Introduce $\bL_\delta$ as the collection of operators
$Lu=a^{ij}D_{ij}u$ with $a(x)
=(a^{ij}(x))\in \mathcal{S}_{\delta}$
for all $x\in\bR^{d}$.

 We need a slight generalization
of  the main result of \cite{Li86} (stated as Lemma 2.3 in \cite{Kr10})
which can be proved in the same way as in \cite{Li86}
by using dilations and  standard approximation  arguments.
\begin{lemma}
                                                  \label{lemma 8.11.1}

Let $r\in(0,\infty)$ and let
  $u\in   C(\bar B_{r})\cap  W^{ 2}_{d }(B_{\rho})$
for any $\rho\in (0,r)$. Then there are constants $\gamma\in(0, 1]$ and $N$,
depending only on
$\delta, d$ such that
for any $L\in  \bL_\delta $ we have
$$
\dashint_{B_{r}} |D^2u|^\gamma \, dx   \leq N
\left(\dashint_{B_{r}}| Lu|^{ d } \,
dx \right)^{\gamma/ d }+Nr^{-2\gamma}\sup_{\partial
B_{r}} |u|^\gamma .
$$
\end{lemma}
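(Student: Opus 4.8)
The plan is to derive the inequality from the corresponding estimate for $r=1$ and functions lying in $W^2_d$ of the full ball $B_1$, i.e. from the main result of \cite{Li86} in the form recorded as Lemma 2.3 of \cite{Kr10}: there exist $\gamma\in(0,1]$ and $N=N(d,\delta)$ so that for every $L\in\bL_\delta$ and every $v\in W^2_d(B_1)$ (such a $v$ is automatically in $C(\bar B_1)$),
$$
\dashint_{B_1}|D^2v|^\gamma\,dx\le N\Big(\dashint_{B_1}|Lv|^d\,dx\Big)^{\gamma/d}+N\sup_{\partial B_1}|v|^\gamma .
$$
If this is only available for smooth $v$, one first upgrades it to $v\in W^2_d(B_{1+\varepsilon_0})$ by writing $v_\varepsilon=v*\eta_\varepsilon$, applying the smooth case to $v_\varepsilon$ with the same fixed operator $L$, and letting $\varepsilon\downarrow0$: one uses that $D^2v_\varepsilon=(D^2v)*\eta_\varepsilon\to D^2v$ and $Lv_\varepsilon=a^{ij}\big[(D_{ij}v)*\eta_\varepsilon\big]\to Lv$ in $L_d(B_1)$ (the coefficients being bounded), that $v_\varepsilon\to v$ uniformly on $\bar B_1$, and the elementary bound $\big|\,|s|^\gamma-|t|^\gamma\,\big|\le|s-t|^\gamma$ for $\gamma\le1$, which makes $w\mapsto\int_{B_1}|w|^\gamma$ continuous on $L_d(B_1)$.

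With this input in hand, the proof is a scaling followed by a passage to the limit at the boundary sphere. Fix $\sigma\in(0,1)$, put $\lambda=\sigma r$, and set $v(x):=u(\lambda x)$ for $x\in\bar B_1$. Since $u\in W^2_d(B_\rho)$ for every $\rho<r$, choosing $\rho$ with $\lambda<\rho<r$ gives $v\in W^2_d(B_{\rho/\lambda})$ with $\rho/\lambda>1$, so $v\in W^2_d(B_1)\cap C(\bar B_1)$ and the displayed estimate applies to $v$ and to $\hat Lw(x):=a^{ij}(\lambda x)D_{ij}w(x)$, which again lies in $\bL_\delta$ because the ellipticity bounds are invariant under $x\mapsto\lambda x$. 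Using $D^2v(x)=\lambda^2(D^2u)(\lambda x)$, $\hat Lv(x)=\lambda^2(Lu)(\lambda x)$, $\sup_{\partial B_1}|v|=\sup_{\partial B_\lambda}|u|$, and the identity $\dashint_{B_1}h(\lambda x)\,dx=\dashint_{B_\lambda}h(y)\,dy$, the estimate for $v$ becomes, after cancelling the common factor $\lambda^{2\gamma}$,
$$
\dashint_{B_\lambda}|D^2u|^\gamma\,dy\le N\Big(\dashint_{B_\lambda}|Lu|^d\,dy\Big)^{\gamma/d}+N\lambda^{-2\gamma}\sup_{\partial B_\lambda}|u|^\gamma .
$$

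It remains to let $\sigma\uparrow1$, i.e.\ $\lambda\uparrow r$. If $Lu\notin L_d(B_r)$ there is nothing to prove, so assume $Lu\in L_d(B_r)$. On the left, $\dashint_{B_\lambda}|D^2u|^\gamma=\tfrac1{|B_\lambda|}\int_{B_\lambda}|D^2u|^\gamma\to\dashint_{B_r}|D^2u|^\gamma$ by monotone convergence (this also disposes of the a priori possibility $D^2u\notin L_\gamma(B_r)$: the limit would be $+\infty$, forcing the right-hand side, which is finite, to be $+\infty$ too, so in fact $D^2u\in L_\gamma(B_r)$); likewise $\dashint_{B_\lambda}|Lu|^d\to\dashint_{B_r}|Lu|^d$, a finite limit. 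For the boundary term, $\lambda^{-2\gamma}\to r^{-2\gamma}$, and by uniform continuity of $u$ on $\bar B_r$ one has $\sup_{\partial B_\lambda}|u|=\sup_{|\omega|=1}|u(\lambda\omega)|\to\sup_{|\omega|=1}|u(r\omega)|=\sup_{\partial B_r}|u|$. Combining these limits gives the asserted inequality.

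There is no essential difficulty here, only two points requiring care. First, the hypothesis is phrased with $u\in W^2_d(B_\rho)$ for all $\rho<r$ rather than $u\in W^2_d(B_r)$ precisely so that the dilated function $v=u(\lambda\cdot)$ lands in $W^2_d$ of a ball strictly containing $\bar B_1$ and the quoted $r=1$ estimate is legitimately applicable; this is also what allows one to assume nothing about $u$ near $\partial B_r$ beyond continuity. Second, one must justify interchanging the limit $\sigma\uparrow1$ with the integrals and with the boundary supremum, which is handled by monotone convergence for the two integral averages and by uniform continuity of $u$ for the supremum term. Should the quoted $r=1$ result be stated with $\sup_{B_1}|u|$ in place of $\sup_{\partial B_1}|u|$, it suffices to additionally invoke the Aleksandrov--Bakel'man--Pucci estimate $\sup_{B_1}|u|\le\sup_{\partial B_1}|u|+N\|Lu\|_{L_d(B_1)}$ and absorb $\|Lu\|_{L_d(B_1)}^\gamma$ into the term already present on the right.
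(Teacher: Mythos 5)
Your proof is correct and is essentially the paper's intended argument spelled out: the paper offers no details beyond saying the lemma follows from the main result of \cite{Li86} (Lemma 2.3 of \cite{Kr10}) ``by using dilations and standard approximation arguments,'' and your dilation to $B_{\lambda}$ with $\lambda=\sigma r<r$ (where the hypothesis $u\in W^{2}_{d}(B_{\rho})$ for all $\rho<r$ makes the fixed-radius estimate applicable), followed by the passage $\lambda\uparrow r$ via monotone convergence for the two averages and uniform continuity of $u$ on $\bar B_{r}$ for the boundary supremum, is exactly that argument. The mollification remark and the ABP fallback for a $\sup_{B_{1}}$-form of the quoted estimate are also sound.
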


\begin{lemma}
                        \label{3.6.5}
Let $r\in(0, \infty)$ and let  $w \in
W^{2}_{d}(B^{+}_{\rho})\cap C (\bar B_{r}^{+})$ for any $\rho\in(0,r)$.
Assume that $w=0$ on $\partial B_{r}^{+}$. Then
there are constants $\gamma\in(0,1]$ and $N$, depending only
on $\delta$ and $d$, such that for any $L\in\bL_\delta$,
$$
\dashint_{B^{+}_{r}} |D^2 w|^\gamma \,dx \leq N
 \left(\dashint_{B_r^{+}}|Lw|^d\,dx\right)^{\gamma/d}.
$$
\end{lemma}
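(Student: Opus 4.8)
The plan is to reduce this boundary estimate for $w$ vanishing on the \emph{whole} boundary $\partial B_r^+$ to Lemma \ref{lemma 8.11.1}, the interior-type $W^2_\gamma$ bound, applied to the odd reflection of $w$ across the flat portion $\{x^1=0\}$. First I would use dilation to normalize to $r=1$; this is legitimate because both sides scale homogeneously under $x\mapsto rx$ (the factor $r^{-2\gamma}$ in Lemma \ref{lemma 8.11.1} is exactly what makes the two terms there scale the same way, and here we want to show that the sup-term drops out entirely). Next, let $\tilde w$ be the odd extension of $w$ in $x^1$, as in the proof of Lemma \ref{3.6.2}: $\tilde w(x^1,x') := w(|x^1|,x')\,\mathrm{sgn}(x^1)$. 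By Lemma 8.2.1 in \cite{Kr08} we have $\tilde w\in W^2_d(B_\rho)$ for $\rho<r$, and $\tilde w\in C(\bar B_r)$ since $w$ vanishes on $\{x^1=0\}$. Moreover $\tilde w = 0$ on all of $\partial B_r$: on the upper hemisphere this is because $w=0$ on the curved part of $\partial B_r^+$, and on the lower hemisphere it follows by the odd symmetry.

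Now given $L = a^{ij}D_{ij}\in\bL_\delta$, I would like to apply Lemma \ref{lemma 8.11.1} to $\tilde w$ on $B_r$. The issue is that I must feed that lemma a \emph{single} operator in $\bL_\delta$, whereas $L\tilde w$ in the lower half-ball is governed by the reflected coefficients. So I would introduce the reflected coefficient matrix $\tilde a^{ij}(x^1,x') := a^{ij}(|x^1|,x')$ for $i,j$ both $\ne 1$ or both $=1$, and $\tilde a^{ij} := -a^{ij}(|x^1|,x')$ when exactly one of $i,j$ equals $1$; this still lies in $\cS_\delta$ pointwise (the sign flips form a diagonal orthogonal conjugation, which preserves the ellipticity bounds). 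A direct computation using the chain rule for the odd reflection shows that $\tilde L\tilde w(x) = (Lw)(|x^1|,x')\,\mathrm{sgn}(x^1)$ a.e., where $\tilde L = \tilde a^{ij}D_{ij}$; equivalently $|\tilde L\tilde w(x)| = |Lw|(|x^1|,x')$. Then Lemma \ref{lemma 8.11.1} applied with the operator $\tilde L\in\bL_\delta$ gives
$$
\dashint_{B_r}|D^2\tilde w|^\gamma\,dx \le N\Big(\dashint_{B_r}|\tilde L\tilde w|^d\,dx\Big)^{\gamma/d} + Nr^{-2\gamma}\sup_{\partial B_r}|\tilde w|^\gamma,
$$
and the last term vanishes because $\tilde w=0$ on $\partial B_r$.

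Finally I would translate back to $w$. Since $|D^2\tilde w| = |D^2 w|(|x^1|,x')$ a.e. (the odd reflection leaves $|D^2\tilde w|$ equal to the even reflection of $|D^2 w|$), and likewise $|\tilde L\tilde w|$ is the even reflection of $|Lw|$, the averages over $B_r$ equal the corresponding averages over $B_r^+$ (up to the harmless factor $|B_r|/|B_r^+| = 2$, which is absorbed into $N$). This yields
$$
\dashint_{B_r^+}|D^2 w|^\gamma\,dx \le N\Big(\dashint_{B_r^+}|Lw|^d\,dx\Big)^{\gamma/d},
$$
as desired, with $\gamma$ and $N$ depending only on $\delta$ and $d$. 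The only point requiring genuine care is verifying that the reflected operator $\tilde L$ indeed lies in $\bL_\delta$ and that $|\tilde L\tilde w|$ reflects correctly — i.e. that the cross-derivative sign flips in the coefficients exactly cancel the sign flips coming from differentiating the odd function $\tilde w$ twice in mixed variables involving $x^1$. That bookkeeping is routine but is where an error would most easily creep in; everything else is dilation, reflection regularity from \cite{Kr08}, and a direct citation of Lemma \ref{lemma 8.11.1}.
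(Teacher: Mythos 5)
Your strategy is exactly the paper's: take the odd reflection $\tilde w$ of $w$ across $\{x^1=0\}$, build a reflected operator in $\bL_\delta$, and apply Lemma \ref{lemma 8.11.1}, with the boundary sup term killed because $\tilde w=0$ on all of $\partial B_r$. But the one step you yourself flagged as delicate is where your write-up goes wrong: you define the reflected coefficients with an \emph{unconditional} sign flip, $\tilde a^{1j}:=-a^{1j}(|x^1|,x')$ for $j\ge2$ on the whole ball. On the lower half-ball this is correct: there $D^2\tilde w(x)=-P\,D^2w(y)\,P$ with $P=\mathrm{diag}(-1,1,\dots,1)$ and $y=(|x^1|,x')$, so conjugating $a$ by $P$ gives $\tilde L\tilde w(x)=-(Lw)(y)$, the odd extension of $f=Lw$. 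On the upper half-ball, however, $\tilde w=w$ and $D^2\tilde w=D^2w$, so with your coefficients $\tilde L\tilde w=\operatorname{tr}\bigl(PaP\,D^2w\bigr)$, which differs from $Lw$ by $-4\sum_{j\ge2}a^{1j}D_{1j}w$ and is not $f$ in general. Hence your key identity $\tilde L\tilde w(x)=(Lw)(|x^1|,x')\,\mathrm{sgn}(x^1)$ fails for $x^1>0$, and Lemma \ref{lemma 8.11.1} then bounds $D^2\tilde w$ by averages of $|\tilde L\tilde w|$, which on the upper half is $|L'w|$ for a \emph{different} operator $L'=(PaP)^{ij}D_{ij}\in\bL_\delta$, not the $|Lw|$ required by the statement.

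The repair is precisely the paper's definition: flip the mixed entries only where $x^1<0$, i.e. $\tilde a^{1j}(x)=\mathrm{sgn}(x^1)\,a^{1j}(|x^1|,x')$ for $j\ge 2$ (and symmetrically), $\tilde a^{ij}(x)=a^{ij}(|x^1|,x')$ otherwise. This keeps $\tilde a\in\cS_\delta$ pointwise (it is $a$ on the upper half and $PaP$ on the lower half), yields $\tilde L\tilde w=\tilde f$ on both half-balls, and then the rest of your argument — $\tilde w\in C(\bar B_r)\cap W^2_d(B_\rho)$ by Lemma 8.2.1 of \cite{Kr08}, $\tilde w=0$ on $\partial B_r$ so the sup term in Lemma \ref{lemma 8.11.1} vanishes, and the reflection symmetry of $|D^2\tilde w|$ and $|\tilde f|$ converting averages over $B_r$ into averages over $B_r^+$ — goes through verbatim and reproduces the paper's proof. (The preliminary dilation to $r=1$ is unnecessary, since the boundary term drops out at any scale, but it is harmless.)
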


\begin{proof}
Denote $f=Lw$. Let $\tilde w$ and $\tilde f$ be the
odd extension of $w$ and $f$ with respect to $x^1$.
Denote by $\tilde L\in \bL_\delta$
the operator with coefficients
$$
\tilde a^{ij}(x)={\text{sgn}(x^1)}a^{ij}(|x^1|,x')\quad \text{for}\,\,i=1,j\ge 2\,\,\text{or}\,\,j=1,i\ge 2,
$$
$$
\tilde a^{ij}(x)=a^{ij}(|x^1|,x')\quad \text{otherwise}.
$$
Clearly, $\tilde w\in  C(\bar B_{r})\cap  W^{ 2}_{d }(B_{\rho})$
for any $\rho<r$, $\tilde{w}=0$ on $\partial B_{r}$, and
$\tilde L\tilde w=\tilde f$
in $B_r$. Now Lemma \ref{lemma 8.11.1} yields
$$
\dashint_{B_{r}} |D^2 \tilde w|^\gamma \,dx \leq N
\left(\dashint_{B_r}|\tilde f|^d\,dx\right)^{\gamma/d}.
$$
To finish the proof of the lemma, it
suffices to recall the definitions of $\tilde w$ and $\tilde f$.
\end{proof}

Everywhere below in this section
$\alpha$ is the constant from Lemma \ref{3.6.3}
and $\gamma$ is the one from Lemma \ref{3.6.5}.

\begin{lemma}
                                                   \label{osc}
Let $r\in(0,\infty)$, $\kappa\geq  16 $, $x_0^1\geq0$.
 Let $u\in  W^2_d
(  B^+_{\kappa r} (x_0^1)) $ be a solution of  \eqref{3.6.1}
in $   B^+_{\kappa r} (x_0^1) $  vanishing on
 $B_{\kappa r}(x_0^1)\cap \partial \bR^d_+$. Then
$$
\dashint_{B^{+}_r(x_0^1)}\dashint_{B^{+}_r(x_0^1)}| D^2 u (x)-
 D^2 u (y)|^\gamma \,dx\,dy
$$
\begin{equation}
                                                  \label{osc1}
\leq N\kappa^d \left(\dashint_{B^{+}_{\kappa r}
(x_0^1)} |f|^d \,dx\right)^{\gamma/d}
 +N\kappa^{-\gamma \alpha} \left(\dashint_{B^{+}_{\kappa r}(x_0^1)}
 |  D^2 u|^d \,dx\right)^{\gamma/d},
\end{equation}
where  the constant  $N$ depends only on $d$ and $\delta$.

\end{lemma}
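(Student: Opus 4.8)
The plan is to run a Caffarelli-type perturbation (Campanato) argument: near $B_r^{+}(x_0^1)$ compare $u$ with the solution $v$ of the homogeneous ``constant-coefficient'' equation having the same boundary data, estimate $D^{2}v$ by the boundary $C^{2,\alpha}$-type estimate of Lemma \ref{3.6.3}, estimate $D^{2}(u-v)$ by the $W^{2}_{\gamma}$-estimate of Lemma \ref{3.6.5}, and control the sup-norm entering the former through the Poincar\'e-type estimate of Lemma \ref{3.6.2}. Write $x_{0}=(x_{0}^{1},0)$ and $B_{\rho}^{+}=B_{\rho}^{+}(x_{0}^{1})$. A preliminary reduction disposes of the location of $x_{0}^{1}$: if $x_{0}^{1}\ge\kappa r$ then $B_{\kappa r}(x_{0})\subset\bR^{d}_{+}$, the vanishing hypothesis is vacuous, and \eqref{osc1} is the interior oscillation estimate from \cite{Kr10} (obtained exactly as below with the interior analogues of Lemmas \ref{3.6.2}--\ref{3.6.5}); so we assume $x_{0}^{1}<\kappa r$, and for clarity we present the case $x_{0}^{1}=0$, the remaining case $0<x_{0}^{1}<\kappa r$ being handled by a similar, more tedious bookkeeping of which of the relevant balls meet $\{x^{1}=0\}$. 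Finally, since replacing $u$ by $u+cx^{1}$ changes neither $D^{2}u$, nor equation \eqref{3.6.1}, nor the vanishing of $u$ on $\{x^{1}=0\}$, we may assume $(D_{1}u)_{B_{\kappa r}^{+}}=0$.

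First I would apply Lemma \ref{3.6.2} on $B_{\kappa r}^{+}$ to get
$$
M:=\sup_{B_{\kappa r}^{+}}|u|\le N(\kappa r)^{2}\Big(\dashint_{B_{\kappa r}^{+}}|D^{2}u|^{d}\,dx\Big)^{1/d}.
$$
Next, let $v$ solve $F(D^{2}v)=0$ in $B_{\kappa r/2}^{+}$ with $v=u$ on $\partial B_{\kappa r/2}^{+}$; its existence and the regularity $v\in C(\overline{B_{\kappa r/2}^{+}})$, $v\in C^{2}_{b}(B_{s}^{+})$ for every $s<\kappa r/2$, come from the solvability theory for convex uniformly elliptic equations in domains with the exterior cone condition, together with interior and flat-boundary $C^{2,\alpha}$-regularity (cf.\ Lemma \ref{3.6.3}) and a barrier at the edge of the half-ball. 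Since $u=0$ on the flat part of $\partial B_{\kappa r/2}^{+}$, so is $v$, hence Lemma \ref{3.6.3} (with its ``$\kappa$'' equal to $8$ and its ``$r$'' equal to $\kappa r/16$), together with $B_{r}^{+}\subset B_{\kappa r/16}^{+}$ (valid since $\kappa\ge16$), gives
$$
[D^{2}v]_{C^{\alpha}(B_{r}^{+})}\le N(\kappa r)^{-2-\alpha}\sup_{\partial B_{\kappa r/2}^{+}}|v|\le N(\kappa r)^{-2-\alpha}M .
$$
Since $\operatorname{diam}B_{r}^{+}\le2r$, multiplying the H\"older bound by $(2r)^{\gamma\alpha}$ and inserting the bound for $M$ yields
$$
\dashint_{B_{r}^{+}}\dashint_{B_{r}^{+}}|D^{2}v(x)-D^{2}v(y)|^{\gamma}\,dx\,dy\le N\kappa^{-\gamma\alpha}\Big(\dashint_{B_{\kappa r}^{+}}|D^{2}u|^{d}\,dx\Big)^{\gamma/d}.
$$

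For $w:=u-v$ one has $w\in W^{2}_{d}(B_{\rho}^{+})\cap C(\overline{B_{\kappa r/2}^{+}})$ for every $\rho<\kappa r/2$ and $w=0$ on $\partial B_{\kappa r/2}^{+}$. Moreover $F(D^{2}u)-F(D^{2}v)=f$ a.e.; since $F$ is convex, positive homogeneous of degree one and satisfies $({\rm H}_{1})$, testing the subgradient inequality for $F$ with the matrices $N\pm\xi\xi^{*}$ and using $({\rm H}_{1})$ shows that the subdifferential of $F$ lies in $\cS_{\delta}$, so integrating a measurable selection of subgradients of $F$ along the segment from $D^{2}v(x)$ to $D^{2}u(x)$ produces a measurable $a(x)\in\cS_{\delta}$ with $a^{ij}(x)D_{ij}w(x)=f(x)$ a.e.; that is, $Lw=f$ for some $L\in\bL_{\delta}$. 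Then Lemma \ref{3.6.5} gives
$$
\dashint_{B_{\kappa r/2}^{+}}|D^{2}w|^{\gamma}\,dx\le N\Big(\dashint_{B_{\kappa r/2}^{+}}|f|^{d}\,dx\Big)^{\gamma/d}\le N\Big(\dashint_{B_{\kappa r}^{+}}|f|^{d}\,dx\Big)^{\gamma/d},
$$
the last step because $|B_{\kappa r}^{+}|/|B_{\kappa r/2}^{+}|$ is bounded by a constant depending only on $d$; passing from the average over $B_{\kappa r/2}^{+}$ to the average over $B_{r}^{+}\subset B_{\kappa r/2}^{+}$ costs a factor $|B_{\kappa r/2}^{+}|/|B_{r}^{+}|\le N\kappa^{d}$, so $\dashint_{B_{r}^{+}}|D^{2}w|^{\gamma}\,dx\le N\kappa^{d}(\dashint_{B_{\kappa r}^{+}}|f|^{d}\,dx)^{\gamma/d}$.

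To conclude, on $B_{r}^{+}$ we have $D^{2}u=D^{2}w+D^{2}v$ a.e., so by the subadditivity of $t\mapsto t^{\gamma}$ on $[0,\infty)$,
$$
|D^{2}u(x)-D^{2}u(y)|^{\gamma}\le|D^{2}w(x)|^{\gamma}+|D^{2}w(y)|^{\gamma}+|D^{2}v(x)-D^{2}v(y)|^{\gamma};
$$
averaging this over $B_{r}^{+}\times B_{r}^{+}$ and inserting the three bounds above gives \eqref{osc1}. The step I expect to be the main obstacle is the construction of $v$ --- establishing solvability of the homogeneous Dirichlet problem with continuity up to the non-smooth boundary of the half-ball and with enough interior regularity for Lemmas \ref{3.6.3} and \ref{3.6.5} to apply; the measurable-selection argument producing $L\in\bL_{\delta}$ is the other ingredient that must be written out, though it is routine once $({\rm H}_{1})$ is restated as the inclusion of the subdifferential of $F$ in $\cS_{\delta}$.
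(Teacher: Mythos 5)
Your main (near-boundary) case is essentially the paper's own proof: the same decomposition $u=v+w$ with $v$ solving the homogeneous constant-coefficient problem and $w$ killed by Lemma \ref{3.6.5}, the same use of Lemma \ref{3.6.2} plus the maximum principle to control $\sup|v|$, the same linearization fact producing $L\in\bL_\delta$, and existence of $v$ by exactly the citation the paper uses (Safonov, Theorem 7.1 of [SA]/[Sa94]); the paper merely normalizes $\kappa r=8$ by dilation and works on $B^+_2\subset B^+_4$ instead of your $B^+_{\kappa r/16}\subset B^+_{\kappa r/2}$, and reduces to smooth $u$ by density before solving for $v$.

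The one genuine defect is your reduction of the remaining cases. You split at $x_0^1\ge\kappa r$ (interior) versus $x_0^1<\kappa r$, and assert that $0<x_0^1<\kappa r$ is ``similar, more tedious bookkeeping'' of the $x_0^1=0$ argument. It is not, for the middle range: Lemmas \ref{3.6.3} and \ref{3.6.5} are stated (and proved, via odd reflection in the case of Lemma \ref{3.6.5}) only for half-balls $B^+_\rho(y)$ centered at a point $y$ of $\{x^1=0\}$; when, say, $x_0^1\approx\kappa r/2$, your comparison domain $B^+_{\kappa r/2}(x_0)$ is a ball with a small cap sliced off, to which neither lemma applies, and the reflection argument does not extend to it. The correct repair is the paper's split at $x_0^1\lessgtr\kappa r/8$ (after scaling): if $x_0^1>\kappa r/8$ then $B_{\kappa r/8}(x_0)\subset\bR^d_+$ and \eqref{osc1} follows from the interior oscillation estimate of \cite{Kr10} with ratio $\kappa/8\ge 2$ (so your interior case must be widened well below the threshold $x_0^1\ge\kappa r$); if $x_0^1\le\kappa r/8$ then, since $r\le\kappa r/16$, one has $B^+_r(x_0^1)\subset B^+_{\kappa r/4}(0)\subset B^+_{\kappa r/2}(0)\subset B^+_{\kappa r}(x_0^1)$, and your $x_0^1=0$ argument runs verbatim on half-balls re-centered at the origin, at the harmless cost of another factor $N\kappa^d$ in the comparison of averages. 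With that case analysis put in, your proof coincides with the paper's.
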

\begin{proof}
 Dilations show that it suffices to prove
the lemma only for $\kappa r=8$. We consider two cases.

{\em Case 1: $x_0^1>1$.} In this case, we have $B^+_{\kappa r/8}(x_0^1)
=B_{ r\kappa/8}(x_0^1)\subset \bR^d_+$. Therefore, inequality
\eqref{osc1} is an immediate consequence of Lemma 2.4 in
\cite{Kr10} since
$\kappa/8\ge 2$  (cf. the comment  at  the beginning of the section).

{\em Case 2: $x_0^1\in [0,1]$.} Since $r=8/\kappa\leq1/2$, we have
$$
B^{+}_r(x_0^1)\subset B^{+}_2 \subset B^{+}_4\subset B^{+}_{\kappa r}(x_0^1).
$$
By using a standard density argument, we may assume
$u\in C_b^{\infty}(\bar{B}_{\kappa r}^{+}(x_0^1))$.
 Define $\hat{u}(x):= u(x)- x^{1}(D_{1}u )_{ B _{4}^{+}}$.
Let $v$ be a classical solution of \eqref{3.6.1} in $B^+_4$
 with $f\equiv 0$
and boundary condition $v=\hat u$ on $\partial B _{4}^{+} $.
Such a solution exists due to Theorems 7.1 of \cite{SA}
 or of \cite{Sa94}.
Then by Lemmas \ref{3.6.3} and~\ref{3.6.1},
\begin{eqnarray*}
\dashint_{B^{+}_r(x_0^1)}\dashint_{B^{+}_r(x_0^1)}|  D^2v (x)-
  D^2 v (y)|\,dx\,dy
\le Nr^\alpha[  D^2v]_{ C^{\alpha}(B^{+}_2)}
                 \nonumber\\
\le Nr^\alpha\sup_{\partial B^+_4} |v|
 =Nr^\alpha\sup_{\partial B^+_4} |\hat u|
\leq N \kappa^{-\alpha}
\left(\dashint_{B^{+}_4} |D^2 u|^d \,dx\right)^{1/d}.
\end{eqnarray*}
Recall that $\gamma\in (0,1]$. By H\"older's inequality, we get
\begin{eqnarray}
                                                  \label{eqn3.6.6}
\dashint_{B^{+}_r(x_0^1)}\dashint_{B^{+}_r(x_0^1)}|
   D^2v (x)-   D^2 v (y)|^\gamma\,dx\,dy
                 \nonumber\\
\le N \kappa^{-\gamma\alpha}
\left(\dashint_{B^{+}_{\kappa r}(x_0^1)} | D^2 u|^d \,dx\right)^{\gamma/d}.
\end{eqnarray}

Next we recall a simple and well-known fact that condition $({\rm H}_{1})$
implies that for any
$\cS$ valued functions $ u''(x) $ and
$ v''(x) $ there is an operator $L=a^{ij}D_{ij}\in \bL_{\delta}$ such that
$F(u''(x))-F(v''(x))=a^{ij}[u''_{ij}-v''_{ij}](x)$.
Then set
 $w:= \hat{u}-v$ in ${B^{+}_4}$ and notice that
$w\in W^{2}_{d}(B^{+}_{\rho})\cap C (\bar B_{4}^{+})$
for any $\rho<4$,  $w=0$ on
$\partial B_4^+$, and $F(D^{2}\hat{u})=f$.

It follows by the above that there exists an operator $L\in \bL_\delta$  such
that
$Lw =f$ in $B^{+}_4$. By Lemma \ref{3.6.5}
and the fact that $\kappa r=8$, we get
$$
\dashint_{B^{+}_r(x^1_0)}|  D^2w|^{\gamma}\,dx
\leq N \kappa^{d}\dashint_{B^{+}_4}|  D^2w|^\gamma\,dx
\leq N \kappa^d\left(\dashint_{B^{+}_{\kappa r}(x^1_0)}
 |f|^d\,dx\right)^{\gamma/d}
$$
and
\begin{eqnarray*}
\dashint_{B^{+}_{r}(x^1_0)}\dashint_{B^{+}_{r}(x^1_0)}|  D^2w (x)-  D^2w (y)|^{\gamma}\,dx\,dy\\
\leq N \kappa^d\left(\dashint_{B^{+}_{\kappa r}(x^1_0)}
|f|^d\,dx\right)^{\gamma/d}.
\end{eqnarray*}
Combining this with \eqref{eqn3.6.6}
and observing that $D^{2}u=D^{2}v+D^{2}w$ yield \eqref{osc1}. The lemma
is proved.
\end{proof}

If $g$ is a measurable function in $\bR^{d} $, define
 its maximal function by
$$
\mathbb{M}(g)(x)=\sup_{ B_{r}(y)\ni x }
\dashint_{ B_{r}(y) } |g(z)| \,dz.
$$
It is easy to see that, for any $r>0$ and $x \in\bR^{d}_{+}$,
  we have
\begin{equation}
                                                    \label{7.13.1}
\dashint_{B^{+}_{r}(x )}|g(z)|\,dz\leq 2
\dashint_{B _{r}(x )}|g(z)I_{\bR^{d}_{+}}(z)|\,dz
\leq 2\bM(gI_{\bR^{d}_{+}})(x).
\end{equation}

Next in the measure
space $\bR^{d}_{+}$ endowed with the Borel $\sigma$-field and Lebesgue
 measure
consider the filtration of dyadic cubes $\frC=\{\bC_{n},n\in\bZ\}$, where
$\bZ=\{0,\pm1,\pm2,...\}$ and $\bC_{n}$ is the collection of cubes
$$
 (i_{1}2^{-n},(i_{1}+1)2^{-n}]\times...\times
(i_{d}2^{-n},(i_{d}+1)2^{-n}],\quad i_{1},...,i_{d}\in\bZ,\,\,i_1\ge 0.
$$
 For $x\in\bR^{d}_{+}$ introduce
$$
g^\#_{\gamma}(x)=
\sup_{C\in\frC\,:\,  x\in C}
\left(\dashint_{C} \dashint_{C}|g(y)-g(z)|^\gamma
\,dydz\right)^{1/\gamma}.
$$
Notice that if $x\in C\in\frC$, then for the smallest $r>0$
such that $C\subset B_{r}(x)$ we have
$$
\dashint_{C} \dashint_{C}|g(y)-g(z)|^\gamma
\,dydz\leq N(d)
\dashint_{B^{+}_{r}(x)} \dashint_{B^{+}_{r}(x)}|g(y)-g(z)|^\gamma
\,dydz.
$$
This along with \eqref{7.13.1} and Lemma \ref{osc} lead to the following.
\begin{corollary}
                                                \label{corollary 7.13.1}

 Let $u\in  \WO^2_d
(\bR^{d}_{+}) $ be a solution of  \eqref{3.6.1}
in $\bR^{d}_{+}$. Then, for any $x\in\bR^{d}_{+}$ and $\kappa\geq16$,
 we have
$$
(D^{2}u)^\#_{\gamma}(x)\leq N\kappa^{d/\gamma}
\bM^{1/d}(|f|^{d}I_{\bR^{d}_{+}})(x)+N\kappa^{-\alpha}
\bM^{1/d}(|D^{2}u|^{d}I_{\bR^{d}_{+}})(x),
$$
where the constant $N$ depends only on $d$ and $\delta$.
\end{corollary}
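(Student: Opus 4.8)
The plan is to upgrade the half-ball oscillation estimate of Lemma~\ref{osc} to a pointwise bound on the dyadic sharp function, exactly along the lines sketched in the paragraph preceding the corollary. First I would fix $x\in\bR^d_+$, a parameter $\kappa\ge 16$, and a dyadic cube $C\in\frC$ with $x\in C$, and let $r>0$ be the smallest radius for which $C\subset B_r(x)$. Since the side length of $C$ is comparable to $r$ (with a constant depending only on $d$) and $C\subset\bR^d_+$, one has $|B^+_r(x)|\le N(d)|C|$, whence, writing $g:=D^2u$,
\[
\dashint_{C}\dashint_{C}|g(y)-g(z)|^{\gamma}\,dy\,dz\le N(d)\dashint_{B^+_r(x)}\dashint_{B^+_r(x)}|g(y)-g(z)|^{\gamma}\,dy\,dz ,
\]
which is the comparison already recorded above.

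Next I would apply Lemma~\ref{osc}. That lemma is stated for half-balls centered at axis points $(x_0^1,0)$, but since $F$ is independent of $x$ and both $\bR^d_+$ and equation~\eqref{3.6.1} are invariant under translations in $x'$, the same conclusion holds, by the identical proof (or simply by such a translation), with $B^+_{\kappa r}(x_0^1)$ replaced by $B^+_{\kappa r}(x)$ for our point $x$. Its hypotheses are met because $u\in\WO^2_d(\bR^d_+)$ solves~\eqref{3.6.1} in all of $\bR^d_+$ and vanishes on $\{x^1=0\}$, hence also in $B^+_{\kappa r}(x)$ and on $B_{\kappa r}(x)\cap\partial\bR^d_+$. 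This yields
\[
\dashint_{B^+_r(x)}\dashint_{B^+_r(x)}|g(y)-g(z)|^{\gamma}\,dy\,dz\le N\kappa^{d}\Big(\dashint_{B^+_{\kappa r}(x)}|f|^{d}\Big)^{\gamma/d}+N\kappa^{-\gamma\alpha}\Big(\dashint_{B^+_{\kappa r}(x)}|g|^{d}\Big)^{\gamma/d}
\]
with $N=N(d,\delta)$.

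Then I would invoke~\eqref{7.13.1} with the point $x\in\bR^d_+$ and radius $\kappa r$ to bound $\dashint_{B^+_{\kappa r}(x)}|f|^{d}$ by $2\bM(|f|^{d}I_{\bR^d_+})(x)$, and likewise $\dashint_{B^+_{\kappa r}(x)}|g|^d$ by $2\bM(|D^2u|^d I_{\bR^d_+})(x)$. Chaining the three displayed bounds, taking the supremum over all dyadic cubes $C\ni x$ (the right-hand side being independent of $C$), raising to the power $1/\gamma$, and using $(a+b)^{1/\gamma}\le 2^{1/\gamma}(a^{1/\gamma}+b^{1/\gamma})$ together with $(\kappa^{d})^{1/\gamma}=\kappa^{d/\gamma}$, $(\kappa^{-\gamma\alpha})^{1/\gamma}=\kappa^{-\alpha}$, and the fact that $\gamma$ depends only on $d,\delta$, gives the asserted estimate for $(D^2u)^{\#}_{\gamma}(x)$ with $N$ depending only on $d$ and $\delta$.

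I do not expect any genuine obstacle: all the analytic substance is already packaged in Lemma~\ref{osc} (and, behind it, the Evans--Krylov-type boundary estimate of Lemma~\ref{3.6.3}, the Lin-type $W^2_\gamma$ estimate of Lemma~\ref{3.6.5}, and the odd-reflection device of Lemma~\ref{3.6.2}). The only points needing a word of care are the comparison of averages over a dyadic cube with averages over a circumscribed half-ball, and the harmless relocation of the center of that half-ball off the $x^1$-axis, so that the maximal functions can be evaluated precisely at the prescribed point $x$.
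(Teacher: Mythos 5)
Your proposal is correct and follows essentially the same route as the paper: the corollary is exactly the chain ``dyadic cube versus circumscribed half-ball comparison'' $+$ inequality \eqref{7.13.1} $+$ Lemma \ref{osc}, with the harmless translation in the $x'$-directions (legitimate since $F$ is independent of $x$) allowing the half-ball to be centered at the given point $x$. The only additions you make—the measure comparison $|B^+_r(x)|\le N(d)|C|$ and the $1/\gamma$-power step—are precisely the details the paper leaves implicit, and they are handled correctly.
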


Now we recall Theorem 5.3 of
\cite{Kr10} which is a version of the Fefferman--Stein theorem:
Let
$p\in(1,\infty)$  and $\gamma\in (0,1]$. Then for any
$g\in L_{p}( \bR^{d}_{+} )$, we have
\begin{equation}                                         \label{fs}
\|g\|_{L_{p}( \bR^{d}_{+} )}
\leq N\| g^{\#}_\gamma \|_{L_{p}( \bR^{d}_{+} )},
\end{equation}
where $N$ depends on $p, \gamma $, and $d$ only.

\begin{theorem}
                         \label{3.22.1}
 Let $p>d$.  (i) Let $u\in \WO^{2}_p(\bR^{d}_{+})$ satisfy
\eqref{3.6.1}. Then there exists $N=N(d,\delta, p)$ such that
\begin{equation*}
\|D^2u\|_{L_p(\bR^{d}_{+})} \leq N \|f \|_{L_p(\bR^{d}_{+})}.
\end{equation*}

(ii)   For any $\lambda > 0$ and
$u\in \WO^{2}_p(\bR^{d}_{+})$, we have
\begin{equation}
                       \label{3.6.7}
\lambda\|u\|_{L_p(\bR^{d}_{+})} +\|D^2u\|_{L_p(\bR^{d}_{+})} \leq N \|
F(D^{2}u)-\lambda u
\|_{L_p(\bR^{d}_{+})},
\end{equation}
where $N$ depends only on $d, p$, and $\delta$.

(iii) For any
$f \in L_p(\bR^{d}_{+})$ and $\lambda>0$,
there is a unique solution
$u\in \WO^{2}_p(\bR^{d}_{+})$ of the equation
\begin{equation}
                            \label{4.5.1}
F(D^{2}u)-\lambda u=f.
\end{equation}
\end{theorem}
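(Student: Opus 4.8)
\emph{Part (i).} I would read this off directly from Corollary \ref{corollary 7.13.1} and the Fefferman--Stein inequality \eqref{fs}. Given $u\in\WO^2_p(\bR^d_+)$ with $F(D^2u)=f$, apply \eqref{fs} to the entries of $D^2u$ to get $\|D^2u\|_{L_p(\bR^d_+)}\le N\|(D^2u)^\#_\gamma\|_{L_p(\bR^d_+)}$, then insert the pointwise bound of Corollary \ref{corollary 7.13.1} and take $L_p$ norms. Since $p>d$, the Hardy--Littlewood operator $\bM$ is bounded on $L_{p/d}(\bR^d)$, so for $h=f$ and $h=D^2u$,
\[
\|\bM^{1/d}(|h|^dI_{\bR^d_+})\|_{L_p(\bR^d_+)}=\|\bM(|h|^dI_{\bR^d_+})\|_{L_{p/d}(\bR^d)}^{1/d}\le N\|h\|_{L_p(\bR^d_+)}.
\]
This yields $\|D^2u\|_{L_p}\le N\kappa^{d/\gamma}\|f\|_{L_p}+N\kappa^{-\alpha}\|D^2u\|_{L_p}$, and, since $\|D^2u\|_{L_p}<\infty$, choosing $\kappa=\kappa(d,\delta,p)$ so large that $N\kappa^{-\alpha}\le1/2$ and absorbing the last term proves (i).

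\emph{Part (ii).} First reduce to $\lambda=1$ by the dilation $u(x)=v(\sqrt\lambda\,x)$, which by positive homogeneity of $F$ turns the equation into $F(D^2v)-v=\lambda^{-1}f(\cdot/\sqrt\lambda)$ and scales the two sides of \eqref{3.6.7} by the same power of $\lambda$. For $\lambda=1$ I would add one space variable. Writing points of $\bR^{d+1}$ as $(x,s)$, put $\cC=\bR^d_+\times(0,\pi)$, and for a symmetric $(d+1)\times(d+1)$ matrix $M$ set $\hat F(M)=F(M')+M_{d+1\,d+1}$, where $M'=(M_{ij})_{i,j=1}^d$; then $\hat F$ is convex, positive homogeneous of degree one, and satisfies $({\rm H}_{1})$ with the same $\delta$. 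With $\hat u(x,s):=u(x)\sin s$ one checks, using $\sin s\ge0$ on $(0,\pi)$ and homogeneity of $F$, that $\hat F(D^2\hat u)=(\sin s)(F(D^2u)-u)=(\sin s)\,f(x)$ in $\cC$, while $\hat u\in\WO^2_p(\cC)$ since it vanishes on $\partial\cC$. Applying to $\hat u$ the analogue of part (i) for the cylinder $\cC$ --- obtained by rerunning the arguments of Section \ref{sec2}, the only new feature being the two extra flat boundary pieces $\{s=0\}$, $\{s=\pi\}$, treated exactly like $\{x^1=0\}$ --- bounds $\|D^2\hat u\|_{L_p(\cC)}$ by $N\|(\sin s)f\|_{L_p(\cC)}\le N\|f\|_{L_p(\bR^d_+)}$. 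Since $|D^2\hat u|^2=(\sin^2 s)(|D^2u|^2+|u|^2)+2(\cos^2 s)|Du|^2$ and $\sin s$ is bounded below on a subinterval, Fubini in $s$ gives $\|u\|_{L_p}+\|D^2u\|_{L_p}\le N\|D^2\hat u\|_{L_p(\cC)}$, and undoing the dilation yields \eqref{3.6.7}.

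\emph{Part (iii).} Uniqueness: if $u_1,u_2\in\WO^2_p(\bR^d_+)$ both solve \eqref{4.5.1}, then by the fact recalled in the proof of Lemma \ref{osc} there is $L\in\bL_\delta$ with $F(D^2u_1)-F(D^2u_2)=L(D^2u_1-D^2u_2)$, so $w:=u_1-u_2$ satisfies $Lw-\lambda w=0$; as $w\in\WO^2_p$ with $p>d$ is continuous up to $\partial\bR^d_+$, vanishes there, and has $\|w\|_{W^2_p(B_1(x))}\to0$ as $|x|\to\infty$, the maximum principle forces $w\equiv0$. Existence: for $f\in C_0^\infty(\bR^d_+)$ (dense in $L_p$), solve the Dirichlet problem for \eqref{4.5.1} on bounded $C^{2,\alpha}$ domains $\cD_n\uparrow\bR^d_+$ using the classical $C^{2,\alpha}$ solvability theory for convex constant-coefficient fully nonlinear equations (the $C^{2+\alpha}$ results cited in the introduction and in Lemma \ref{3.6.3}), getting $u_n\in C^{2,\alpha}(\overline{\cD_n})$ vanishing on $\partial\cD_n$; the ABP estimate gives $\sup|u_n|\le N\|f\|_{L_d}$ uniformly, interior and flat-boundary $C^{2,\alpha}$ estimates give uniform local bounds, and comparison with exponential barriers (using $\lambda>0$ and $\operatorname{supp}f$ compact) gives $|u_n(x)|+|D^2u_n(x)|\le Ne^{-c|x|}$; hence a subsequence converges in $C^2_{\mathrm{loc}}$ to some $u\in\WO^2_p(\bR^d_+)$ solving \eqref{4.5.1} with $\|u\|_{W^2_p}\le N\|f\|_{L_p}$ by (ii). For general $f\in L_p$, take $f_k\in C_0^\infty$ with $f_k\to f$ in $L_p$ and solutions $u_k$ as above; by (ii) they are bounded in $\WO^2_p$, so along a subsequence $u_k\rightharpoonup u$ in $\WO^2_p$ and $u_k\to u$ locally uniformly; the difference satisfies $L_{kl}(D^2(u_k-u_l))-\lambda(u_k-u_l)=f_k-f_l$ with $L_{kl}\in\bL_\delta$, so the local $W^2_\gamma$ estimates of Lemmas \ref{lemma 8.11.1} and \ref{3.6.5} (with odd reflection near $\partial\bR^d_+$) give $\|D^2(u_k-u_l)\|_{L_\gamma(B_r^+)}\to0$ for every $r$; thus $D^2u_k\to D^2u$ in $L_\gamma(B_r^+)$, hence a.e. along a subsequence, so $F(D^2u_k)\to F(D^2u)$ a.e., while $F(D^2u_k)=\lambda u_k+f_k\to\lambda u+f$ a.e.; therefore $F(D^2u)-\lambda u=f$ a.e. and $u\in\WO^2_p(\bR^d_+)$, which with uniqueness proves (iii).

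\emph{Main obstacle.} Estimate (i) is a direct payoff of the sharp-function machinery already assembled in Section \ref{sec2}; the care goes elsewhere. In (ii), one must justify the cylinder analogue of (i) (Section \ref{sec2} re-run near the new flat faces) and the bookkeeping of the dimension lift. In (iii), the genuinely delicate point is the passage to the limit for general $L_p$ data: the nonlinearity of $F$ rules out a naive weak-limit argument, so one has to upgrade weak convergence of the approximating solutions to strong convergence of their Hessians via the local $W^2_\gamma$ estimates of Lemmas \ref{lemma 8.11.1} and \ref{3.6.5}.
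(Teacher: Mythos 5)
Parts (i) and (iii) are essentially sound: (i) is exactly the paper's argument (Corollary \ref{corollary 7.13.1} plus \eqref{fs} plus Hardy--Littlewood, then absorb by taking $\kappa$ large), and in (iii) your scheme (classical $C^{2+\alpha}$ solvability for nice $f$, barriers, uniform bounds from (ii), then approximation) is the paper's, with one genuine variation: where the paper passes to the limit by citing the stability results (Theorems 3.5.15 and 3.5.6 of \cite{Kr87}), you upgrade to strong $L_\gamma$ convergence of the Hessians by writing $F(D^{2}u_k)-F(D^{2}u_l)=L_{kl}(u_k-u_l)$ and using the reflected form of Lemmas \ref{lemma 8.11.1} and \ref{3.6.5}; that is a legitimate, more self-contained route (modulo the slip $L(D^2(u_k-u_l))$, which should read $L(u_k-u_l)$, and checking the spherical boundary term in the Lin-type estimate, which tends to zero by the locally uniform convergence).

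Part (ii), however, has a genuine gap. Your Agmon-type dimension lift $\hat u(x,s)=u(x)\sin s$ on $\cC=\bR^d_+\times(0,\pi)$ is algebraically fine (the sign restriction to $(0,\pi)$ is exactly what makes $F((\sin s)D^2u)=(\sin s)F(D^2u)$ work), but it requires a version of part (i) on the cylinder $\cC$, and the claim that the faces $\{s=0\}$, $\{s=\pi\}$ can be ``treated exactly like $\{x^1=0\}$'' is not justified at the edges where $\{x^1=0\}$ meets $\{s=0\}$ or $\{s=\pi\}$. There the two key ingredients of Section \ref{sec2} are unavailable as stated: the boundary $C^{2,\alpha}$ estimate of Lemma \ref{3.6.3} comes from Safonov's half-ball theorems and does not cover Dirichlet problems in quarter-balls (dihedral edges), and the reflection device behind Lemmas \ref{3.6.2} and \ref{3.6.5} fails for the nonlinear operator, since odd reflection in $s$ would need $\hat F(-M)=-\hat F(M)$, which a convex positively homogeneous $F$ does not satisfy; the dyadic sharp-function theorem would also have to be re-proved for the bounded cylinder. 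So the reduction hides precisely the hard new estimates. The paper's own proof of (ii) avoids all of this: by (i) it suffices to show $\lambda\|u\|_{L_p(\bR^d_+)}\le N\|f\|_{L_p(\bR^d_+)}$ with $N=N(d,p,\delta)$, and since $F(0)=0$ by homogeneity one can write $F(D^{2}u)=a^{ij}D_{ij}u$ with measurable $a\in\cS_\delta$, so $Lu-\lambda u=f$ for some $L\in\bL_\delta$ and the desired bound, uniform in $\lambda$ and in the coefficients, is the linear-theory estimate from Theorem 3.5.15 and the proof of Lemma 3.5.5 of \cite{Kr87}. If you want to keep your route, you must either prove edge $C^{2,\alpha}$ (or at least an oscillation) estimate for $\hat F$ in quarter-regions, or replace the lift by an argument that never creates corners.
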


\begin{proof}
(i) First fix $\kappa \geq  16 $.
It follows from Corollary \ref{corollary 7.13.1},
\eqref{fs}, and the Hardy--Littlewood theorem on maximal functions that
\begin{equation}
                                                          \label{7.13.3}
\|D^2u\|_{L_p(\bR^{d}_{+})}\leq N \kappa^{d/\gamma}
 \|f\|_{L_p(\bR^{d}_{+})} + N \kappa^{-\alpha}
\|D^2u\|_{L_p(\bR^{d}_{+})},
\end{equation}
where $N=N(d,\delta,p)$. Assertion (i) is proved once noting that the
inequality holds for arbitrary $\kappa\geq  16 $.

(ii) Assertion (i) implies that, to prove \eqref{3.6.7}, it suffices to prove
\begin{equation}
                                                              \label{7.31.3}
\lambda\|u\|_{L_p(\bR^{d}_{+})} \leq N \|f \|_{L_p(\bR^{d}_{+})},
\end{equation}
where $f=F(D^{2}u)-\lambda u$.

We may assume that $u $
is smooth in $\bar\bR^{d}_{+}$ and vanishes
for $x$ large and for  $x^1=0$. Take an operator $L\in
\bL_\delta$   such that  and $Lu -\lambda u=f$. Then we obtain \eqref{7.31.3}
by Theorem 3.5.15 and the proof of
Lemma 3.5.5 in
\cite{Kr87} with $N$ depending only on $d, p$, and  $\delta$.

(iii) The proof of the solvability of \eqref{4.5.1}
relies on its solvability in $C^{2+\alpha}(\bar{\bR}^{d}_{+})$
with zero boundary condition ($\alpha\in(0,1)$ is perhaps
different from the one above). First we assume
that
$f\in C^{\infty}_{0}(\bR^{d}_{+})$ and by using classical results
(see, for instance, \cite{Kr87} or  \cite{Tr83}) find a function
$u\in C^{2+\alpha}(\bar{\bR}^{d}_{+})$ with $u(0,\cdot)=0$
satisfying \eqref{4.5.1}. Simple barriers show that
$u(x)\to0$ exponentially fast as $|x|\to\infty,x^{1}\geq0$.

Furthermore, there is a well-known and standard procedure
(see, for instance the proof
of Lemma 2.4.4 in \cite{Kr08})
to derive from \eqref{3.6.7} that
\begin{equation}
                                                          \label{7.5.6}
\|u\|_{W^{2}_{p}(B_{1}^{+}(x))}\leq N\|u\|_{L_{p}(B_{2}^{+}(x))}
+N\|f\|_{ L_{p} (B_{2}^{+}(x))},\quad x\in\bR^{d}_{+},
\end{equation}
where $N$ is independent of $x$. To start the procedure
it suffices to notice that for
nonnegative $\zeta\in C^{\infty}_{0}(\bR^{d})$
we have
that
$$
F(D^{2}(\zeta u))-\lambda \zeta u=\zeta f+g,
$$
where
$$
g=F(D^{2}(\zeta u)) -\zeta
F(D^{2}u) ,
$$
and by the homogeneity and Lipschitz continuity of $F$
$$
|g|\leq N|D^{2}(\zeta u)-\zeta D^{2}u|\leq  N(|Du|+|u|).
$$
Upon combining \eqref{7.5.6} and the fact that $u,f\in L_{p}(\bR^{d}_{+})$,
we conclude that $u\in\WO^{2}_{p}(\bR^{d}_{+})$, so that estimate
\eqref{3.6.7} is applicable.

Having done this step we approximate the given $f\in L_{p}(\bR^{d}_{+})$
in the $L_{p}(\bR^{d}_{+})$ norm by functions $f_{n}
\in C^{\infty}_{0}(\bR^{d}_{+})$, which would give us
a sequence of $u_{n}\in\WO^{2}_{p}(\bR^{d}_{+})$
with the $\WO^{2}_{p}(\bR^{d}_{+})$ norms bounded
such that $F(D^{2}u_{n})=f_{n}$.
A subsequence $u_{n'}$ converges then uniformly on compact
subsets of $\bar{\bR}^{d}_{+}$ to a function
$u\in\WO^{2}_{p}(\bR^{d}_{+})$. That $u$ satisfies
\eqref{4.5.1} now follows from Theorems 3.5.15 and 3.5.6 (a)
of \cite{Kr87}.
This proves the existence of solutions.

As usual, uniqueness follows from the fact that
$F(D^{2}u)-F(D^{2}v)= L(u-v)$, where $L\in\bL_{\delta}$.
The theorem is proved.
\end{proof}

\mysection{Elliptic equations
with VMO coefficients in $\bR^{d}_{+}$}
                                    \label{sec3}

We are about to deal with the equation
\begin{equation}                                            \label{goal}
F(D^{2}u,x)
  -\lambda u = f(x)
\end{equation}
 in the half space $\bR^d_+$.
Of course, Assumption \ref{assump2} is supposed to hold  with
$\cD=\bR^d_+$.

\begin{remark}
                                        \label{remark 7.29.1}
We are going to use the following fact:
For any $\mu >0$ there exists $\theta=\theta(\mu, d, \delta )>0$
  such that, if
\eqref{7.28.1} holds with this $\theta$ for any $u''\in\cS$ with $|u''|=1$,
then
\begin{equation}
                                                     \label{7.30.1}
\int_{B_{r}( z)}  \sup_{u''\in\cS:|u''|=1}
|F(u'' , x)-\bar{F}(u'')|\,dx\leq\mu r^{d}.
\end{equation}

To prove this observe that one can find $n=n(d, \delta,\mu )$ points
$u''_{1},...,u''_{n}$ such that, for any  $x$ and any
$u''$ with $ |u''|=1 $, at least one of
$|F(u'',x)-F(u''_{i} ,x )|+|\bar{F}(u'')-\bar{F}(u''_{i})|$
is less than $\mu /(4|B_{1}|)$. The latter is possible
due to the Lipschitz continuity of $F$ and $\bar{F}$
in $u''$ (uniform with respect to $x$).
After that it obviously suffices to choose $\theta=\mu /(4n)$.

We are also going to use the fact that the supremum
in \eqref{7.30.1} is bounded by a constant depending only on $\delta$
and $d$.
\end{remark}

Everywhere below in this section
$\alpha$ is the constant from Lemma \ref{3.6.3}
and $\gamma$ is the one from Lemma \ref{3.6.5}.

\begin{lemma}
                                                     \label{osca}
Let $\beta\in(1,\infty)$,  $\lambda=0$,
$\kappa\geq  16 $,  $ \mu, r>0$, $x_{0}^{1}\geq0$, and $z\in
\bR^d_{+}$.  Suppose that $\theta=\theta(\mu,d  ,\delta )$.
 Let $u\in \WO^{2}_{d}(\bR^{d}_+)$ be
a solution of  \eqref{goal}  vanishing outside $B^{+}_{R_0}(z)$. Then
$$
\dashint_{B^{+}_r(x_{0}^{1})}\dashint_{B^{+}_r(x_{0}^{1})}| D^2u (x)-
 D^2u (y)|^\gamma\,dx\,dy
$$
$$
\leq N \kappa^{d}\left(\dashint_{B^{+}_{\kappa r}(x_{0}^{1})}
  |f|^d \,dx\right)^{\gamma/d}
+N\kappa^{d}\left(\dashint_{B^{+}_{\kappa r}(x_{0}^{1})}|
D^2u|^{\beta d}\,dx\right)^{\gamma/(\beta{d})}\mu^{
\gamma/(\beta'd)}
$$
$$
+N\kappa^{-\gamma\alpha}\left(
\dashint_{B^{+}_{\kappa r}(x_{0}^{1})}| D^2u|^d\,dx\right)^{\gamma/d},
$$
where $N=N(d,\delta ,\beta )$ and $\beta'=\beta/(\beta-1)$.
\end{lemma}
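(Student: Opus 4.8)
The strategy is to reduce the VMO estimate to the constant-coefficient oscillation estimate of Lemma~\ref{osc} by freezing the coefficients, i.e.\ replacing $F(u'',x)$ by the candidate $\bar F(u'')=(F)_{B^+_{\kappa r}(x_0^1)\cap B_{R_0}(z)}$ from Remark~\ref{remark 7.30.1b} (with the convention that if $B_{\kappa r}(x_0^1)$ stays far from the support $B^+_{R_0}(z)$ the estimate is trivial because $u\equiv 0$ there, and if $\kappa r\ge R_0$ one can still use $\bar F$ from Assumption~\ref{assump2} on a ball of radius $R_0$). We may assume $\kappa r=8$ by dilation, as in Lemma~\ref{osc}; but note the dilation rescales $R_0$, which is why the dependence of the final constant must not involve $R_0$ here — indeed it does not. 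As in the proof of Theorem~\ref{3.22.1}(iii) and Lemma~\ref{osc}, we may assume $u$ is smooth. Write $\bar f := \bar F(D^2u) - \lambda u = \bar F(D^2 u)$ (recall $\lambda=0$), so that $u$ solves $\bar F(D^2 u)=\bar f$ in $B^+_{\kappa r}(x_0^1)$, an equation with constant coefficients to which Lemma~\ref{osc} applies. This yields
$$
\dashint\dashint_{B^+_r(x_0^1)}|D^2u(x)-D^2u(y)|^\gamma\,dx\,dy
\le N\kappa^d\Big(\dashint_{B^+_{\kappa r}}|\bar f|^d\Big)^{\gamma/d}
+N\kappa^{-\gamma\alpha}\Big(\dashint_{B^+_{\kappa r}}|D^2u|^d\Big)^{\gamma/d}.
$$

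The remaining task is to control $\bar f = \bar F(D^2u)$ in terms of $f = F(D^2u,x)$ and the VMO error. Since $F(D^2u,x) = f$, we have pointwise $|\bar f(x)| \le |f(x)| + |\bar F(D^2u(x)) - F(D^2u(x),x)|$, and by positive homogeneity of degree one the second term is bounded by $|D^2u(x)|\cdot\sup_{|u''|=1}|\bar F(u'')-F(u'',x)| =: |D^2u(x)|\,\Phi(x)$. By Remark~\ref{remark 7.29.1}, choosing $\theta=\theta(\mu,d,\delta)$ we get $\dashint_{B^+_{\kappa r}}\Phi\,dx \le N\mu$ (absorbing the measure-ratio constant from passing between $B^+$ and $B$, and using $\kappa r=8$ so the radius is comparable to $R_0$ up to the dilation — one splits according to whether the frozen ball has radius $\le R_0$ or uses $\bar F$ on the $R_0$-ball and a crude $\sup$-bound $\Phi\le N(d,\delta)$ on the rest). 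Now estimate
$$
\Big(\dashint_{B^+_{\kappa r}}|\bar f|^d\Big)^{1/d}
\le \Big(\dashint_{B^+_{\kappa r}}|f|^d\Big)^{1/d}
+\Big(\dashint_{B^+_{\kappa r}}|D^2u|^d\,\Phi^d\Big)^{1/d}.
$$
For the last term apply H\"older with exponents $\beta$ and $\beta'$ to split $|D^2u|^d\Phi^d = |D^2u|^d \cdot \Phi^d$:
$$
\Big(\dashint|D^2u|^d\Phi^d\Big)^{1/d}
\le \Big(\dashint|D^2u|^{\beta d}\Big)^{1/(\beta d)}\Big(\dashint\Phi^{\beta' d}\Big)^{1/(\beta' d)}.
$$
Since $\Phi$ is bounded by $N(d,\delta)$ (Remark~\ref{remark 7.29.1}) one has $\Phi^{\beta' d}\le N\Phi$, hence $\big(\dashint\Phi^{\beta' d}\big)^{1/(\beta' d)} \le N\big(\dashint\Phi\big)^{1/(\beta' d)}\le N\mu^{1/(\beta' d)}$. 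Substituting back and raising to the power $\gamma$ produces exactly the three claimed terms, with the middle term carrying the factor $\mu^{\gamma/(\beta' d)}$ and $N=N(d,\delta,\beta)$.

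The main obstacle is bookkeeping around the frozen operator when $\kappa r > R_0$ or when $B_{\kappa r}(x_0^1)$ is not contained in a single ball where Assumption~\ref{assump2} supplies a good $\bar F$: one must either cover $B^+_{\kappa r}$ by $O(\kappa^d)$ balls of radius $\le R_0$ — which is consistent with the $\kappa^d$ factors already present — or, after the $\kappa r=8$ dilation, track that $R_0$ scales to $8/\kappa$, use $\bar F$ on the rescaled $R_0$-ball, and bound the oscillation of $F-\bar F$ on the complementary region by the uniform constant $N(d,\delta)$ times the measure, which is again $\le N\kappa^d$. A secondary point is justifying that $\bar F$ inherits $({\rm H}_1)$ so that Lemma~\ref{osc} genuinely applies; this is noted in Remark~\ref{remark 7.30.1b}. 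Apart from this, every step is a routine combination of the triangle inequality, homogeneity, H\"older, and the already-proved Lemma~\ref{osc} and Remark~\ref{remark 7.29.1}.
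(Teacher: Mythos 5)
Your overall skeleton (freeze the coefficients, apply Lemma \ref{osc} to $\bar F(D^2u)=\hat f$, then control the error $|\bar F(D^2u)-F(D^2u,x)|\le h(x)|D^2u|$ by homogeneity and H\"older with exponents $\beta,\beta'$, lowering the power of $h$ via its uniform bound) is exactly the paper's, but there is a genuine gap in how you make the error term small. You claim $\dashint_{B^+_{\kappa r}(x_0^1)}\Phi\,dx\le N\mu$ for a \emph{single} frozen $\bar F$, justified in the case $\kappa r>R_0$ either by ``$\bar F$ on the $R_0$-ball and a crude sup-bound $\Phi\le N(d,\delta)$ on the rest'' or by covering $B^+_{\kappa r}$ with $O((\kappa r/R_0)^d)$ balls of radius $R_0$. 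Neither works: the complement of the $R_0$-ball occupies most of $B^+_{\kappa r}$ when $\kappa r\gg R_0$, so the crude bound only yields $\dashint\Phi\le N(d,\delta)$, not $N\mu$; and the covering argument fails because Assumption \ref{assump2} produces a \emph{different} admissible $\bar F$ on each small ball, while VMO controls only local oscillation, so the averages of $F$ over distant balls (hence $\Phi$ relative to one fixed $\bar F$) can differ by order one. Accepting, as your last paragraph suggests, an extra contribution $N\kappa^{d}\bigl(\dashint_{B^+_{\kappa r}(x_0^1)}|D^2u|^d\,dx\bigr)^{\gamma/d}$ \emph{without} the factor $\mu^{\gamma/(\beta'd)}$ is not an option: the whole point of the lemma is that the $D^2u$ terms carry either $\kappa^{-\gamma\alpha}$ or $\mu^{\gamma/(\beta'd)}$, so that in the following corollary one can first choose $\kappa$ large and then $\mu$ (i.e.\ $\theta$) small to absorb $\|D^2u\|_{L_p}$; a bare $N\kappa^d$ coefficient cannot be absorbed.

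The missing idea is precisely the hypothesis you never use: $u$ vanishes outside $B^+_{R_0}(z)$, hence $D^2u=0$ a.e.\ off that ball, so the error integrand is $h^d|D^2u|^d I_{B^+_{R_0}(z)}$. Choose $\bar F=(F)_{B^+_{R_0}(z)}$ when $\kappa r\ge R_0$ and $\bar F=(F)_{B^+_{\kappa r}(x_0^1)}$ when $\kappa r<R_0$ (the average over the intersection $B^+_{\kappa r}(x_0^1)\cap B_{R_0}(z)$ that you propose is not covered by Remark \ref{remark 7.30.1b} and is unnecessary). After H\"older, the factor to be made small is $J_2=\dashint_{B^+_{\kappa r}(x_0^1)}h^{\beta'd}I_{B^+_{R_0}(z)}\,dx\le N\dashint_{B^+_{\kappa r}(x_0^1)}h\,I_{B^+_{R_0}(z)}\,dx$, and in the case $\kappa r\ge R_0$ one bounds this by $N(\kappa r)^{-d}R_0^{d}\dashint_{B^+_{R_0}(z)}h\,dx\le N\mu$ using Remark \ref{remark 7.29.1} on the single ball $B^+_{R_0}(z)$; in the case $\kappa r<R_0$ one bounds it directly by $N\dashint_{B^+_{\kappa r}(x_0^1)}h\,dx\le N\mu$. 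With this correction (and dropping the dilation to $\kappa r=8$, which is not needed here and muddles how $R_0$ rescales — it scales to $8R_0/(\kappa r)$, not $8/\kappa$), your argument becomes the paper's proof.
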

\begin{proof}
 Introduce
 $$
\bar{F}(u'')=\left\{
               \begin{array}{ll}
                 (F)_{B^{+}_{R_0}(z)}(u''), & \hbox{if $\kappa r\geq R_0$;} \\
                 (F)_{B^{+}_{\kappa r}(x_{0}^{1})}(u''), & \hbox{otherwise.}
               \end{array}
             \right.
$$
Observe that
$$
\bar F(D^2 u)=\hat f(x),
$$
where
$$
\hat f(x)= \bar F(D^2 u)-F(D^2 u,x) +f(x).
$$
 By Lemma \ref{osc},
\begin{equation*}
\dashint_{ B^{+}_r(x_{0}^{1})} \dashint_{ B^{+}_r(x_{0}^{1})}
| D^2u (x)- D^2u (y)|^\gamma\,dx\,dy
\end{equation*}
\begin{equation*}
\leq N \kappa^{d}\left(\dashint_{ B^{+}_{\kappa r}(x_{0}^{1})}
 |\hat{ f}|^d\,dx\right)^{\gamma/d}
+N\kappa^{-\gamma\alpha}\left(\dashint_{ B^{+}_{\kappa r}(x_{0}^{1})}
| D^2u|^d\,dx\right)^{\gamma/d}.
\end{equation*}

Notice that  by the triangle inequality,
$$
\dashint_{ B^{+}_{\kappa r}(x_{0}^{1})}
 |\hat{f}|^d\,dx
\leq N\dashint_{ B^{+}_{\kappa r}(x_{0}^{1})} |f|^d\,dx
+ N\dashint_{ B^{+}_{\kappa r}(x_{0}^{1})}|\bar F(D^2 u)-F(D^2 u,x)|^d\,dx.
$$
For any $x\in \bR^d_+$, denote
$$
h(x)=\sup_{u''\in \cS: |u''|=1}|F(u'',x)-\bar F(u'')|.
$$
By H\"older's inequality,
$$
\dashint_{ B^{+}_{\kappa r}(x_{0}^{1})}|\bar F(D^2 u)-F(D^2 u,x)|^d\,dx
\le \dashint_{ B^{+}_{\kappa r}(x_{0}^{1})}h^d(x)  |D^2 u|^d\,dx\leq
J_1^{1/\beta}J_2^{ 1/\beta'},
$$
where
$$
J_1= \dashint_{ B^{+}_{\kappa r}(x_{0}^{1})} |D^2u|^{\beta d}\,dx,
$$
$$
J_2= \dashint_{ B^{+}_{\kappa r}(x_{0}^{1})} h^{\beta'
d}(x) I_{B^{+}_{R_0}(z)}\,dx
\leq N \dashint_{ B^{+}_{\kappa r}(x_{0}^{1})}  h(x)I_{B^{+}_{R_0}(z)}\,dx.
$$
If $\kappa r\geq R_0$, then by Remark \ref{remark 7.29.1}
$$
J_2\leq N(\kappa r)^{-d}\int_{B^{+}_{R_0}(z)} h(x) \,dx
\leq N(\kappa r)^{-d} R_0^d\dashint_{B^{+}_{R_0}(z)}  h(x)\,dx \leq
N\mu.
$$
If $\kappa r<R_0$, then
$$
J_2\leq N\dashint_{B^{+}_{\kappa r}(x_{0}^{1})} h(x)\,dx \leq N
\mu.
$$
Therefore,
$$
\dashint_{B^{+}_{\kappa r}(x_{0}^{1})}|\bar F(D^2 u)-F(D^2 u,x)|^d\,dx\leq
 N\left(\dashint_{B^{+}_{\kappa r}(x_{0}^{1})}
| D^2u(x)|^{\beta d}\,dx\right)^{1/\beta}\mu^{ 1/\beta'}
$$
and this leads to the desired result.
The lemma is proved.
\end{proof}

\begin{corollary}
 Under the assumptions of Lemma \ref{osca}, let $p>\beta d$.
Then there is a constant $N_{0}$, depending only on $\delta, \beta, d$, and
 $p$, such that
$$
\|D^2 u\|_{L_p(\bR^d_+)}  \leq N _{0}
\kappa^{ d/\gamma} \|f\|_{L_p(\bR^d_+)}
+N_{0}(\kappa^{ d/\gamma}\mu^{ 1/(\beta' d)} +
\kappa^{-\alpha})\|D^2 u\|_{L_p(\bR^d_+)}.
$$
\end{corollary}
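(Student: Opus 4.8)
The corollary is just the integrated form of Lemma \ref{osca}: the plan is to pass from the double-average oscillation estimate of that lemma to a pointwise bound on the dyadic sharp function $(D^{2}u)^{\#}_{\gamma}$, and then apply the Fefferman--Stein inequality \eqref{fs} together with the Hardy--Littlewood maximal function theorem, exactly as Corollary \ref{corollary 7.13.1} was deduced from Lemma \ref{osc} and as in the proof of Theorem \ref{3.22.1}(i). First I would note that we may assume $\|D^{2}u\|_{L_{p}(\bR^{d}_{+})}<\infty$, since otherwise the asserted inequality is trivial.

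Next I would fix $x\in\bR^{d}_{+}$ and a dyadic cube $C\in\frC$ with $x\in C$, take the smallest $r>0$ with $C\subset B_{r}(x)$ (so $|B_{r}(x)|\le N(d)|C|$), and apply Lemma \ref{osca} on the half-ball $B^{+}_{r}(x)$, the reduction to a ball centered on the $x^{1}$-axis being handled by translation in the $x'$-variables, under which Assumption \ref{assump2} is preserved, just as in the passage from Lemma \ref{osc} to Corollary \ref{corollary 7.13.1}. Raising the resulting bound to the power $1/\gamma$, using $(a+b+c)^{1/\gamma}\le N_{\gamma}\,(a^{1/\gamma}+b^{1/\gamma}+c^{1/\gamma})$, replacing each average over $B^{+}_{\kappa r}(x)$ by the corresponding maximal function through \eqref{7.13.1}, and finally taking the supremum over all such $C$, I would arrive at the pointwise estimate
\begin{align*}
(D^{2}u)^{\#}_{\gamma}(x)
\le{}&N\kappa^{d/\gamma}\,\bM^{1/d}(|f|^{d}I_{\bR^{d}_{+}})(x)
+N\kappa^{d/\gamma}\mu^{1/(\beta'd)}\,\bM^{1/(\beta d)}(|D^{2}u|^{\beta d}I_{\bR^{d}_{+}})(x)\\
&{}+N\kappa^{-\alpha}\,\bM^{1/d}(|D^{2}u|^{d}I_{\bR^{d}_{+}})(x),
\end{align*}
with $N=N(d,\delta,\beta)$, valid for every $x\in\bR^{d}_{+}$.

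Finally I would take $L_{p}(\bR^{d}_{+})$ norms of both sides. On the left, \eqref{fs} gives $\|D^{2}u\|_{L_{p}(\bR^{d}_{+})}\le N\|(D^{2}u)^{\#}_{\gamma}\|_{L_{p}(\bR^{d}_{+})}$, legitimate because $D^{2}u\in L_{p}$ and $p>1\ge\gamma$. On the right, I would invoke the Hardy--Littlewood theorem in $L_{p/d}$ and in $L_{p/(\beta d)}$ --- both exponents being $>1$ precisely because $p>\beta d>d$ --- to get $\|\bM^{1/d}(|f|^{d}I_{\bR^{d}_{+}})\|_{L_{p}}\le N\|f\|_{L_{p}(\bR^{d}_{+})}$, $\|\bM^{1/d}(|D^{2}u|^{d}I_{\bR^{d}_{+}})\|_{L_{p}}\le N\|D^{2}u\|_{L_{p}(\bR^{d}_{+})}$, and $\|\bM^{1/(\beta d)}(|D^{2}u|^{\beta d}I_{\bR^{d}_{+}})\|_{L_{p}}\le N\|D^{2}u\|_{L_{p}(\bR^{d}_{+})}$; collecting the three terms yields the claimed inequality with $N_{0}=N_{0}(\delta,\beta,d,p)$.

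The only genuinely delicate point is the hypothesis $p>\beta d$: it is exactly what makes $p/(\beta d)>1$, so that the ``high-power'' maximal function $\bM^{1/(\beta d)}(|D^{2}u|^{\beta d}I_{\bR^{d}_{+}})$ coming from the $\mu$-term of Lemma \ref{osca} is still controlled by $\|D^{2}u\|_{L_{p}(\bR^{d}_{+})}$ with no loss of integrability. Everything else is bookkeeping with the exponents $\gamma$, $\beta d$, $\beta'd$ and a verbatim repetition of the dyadic sharp-function machinery already assembled before Corollary \ref{corollary 7.13.1}.
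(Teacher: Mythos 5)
Your proposal is correct and follows essentially the same route as the paper, which proves the corollary by saying "it suffices to proceed as in the derivation of \eqref{7.13.3}", i.e., pass from Lemma \ref{osca} to a pointwise bound on the dyadic sharp function via \eqref{7.13.1}, then apply \eqref{fs} and the Hardy--Littlewood theorem, with $p>\beta d$ guaranteeing the exponent $p/(\beta d)>1$ for the extra maximal-function term. Your handling of the off-axis centers by $x'$-translation and the finiteness remark are exactly the routine details the paper leaves implicit.
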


Indeed it suffices to proceed as in the derivation of \eqref{7.13.3}.

By taking $2\beta=1+p/d$ and choosing $\kappa$ and $\theta$ in such a way that
$$
N_{0}(\kappa^{ d/\gamma}\mu^{1/(\beta' d)} +
\kappa^{-\alpha})\leq \frac{1}{2},
$$
we arrive at the following corollary.
\begin{corollary}
                     \label{corollary 3.6.8}
Let $p>d$,  $u\in \WO^{2}_{d}(\bR^{d}_+)$ be a solution of  \eqref{goal}
with $\lambda=0$
 vanishing outside $B^{+}_{R_0}(z)$, where $z\in \bR^d_{+}$.
Then there exist $\theta=\theta(d,p, \delta)\in (0,1]$ and
$N=N(d,p, \delta)$ such that if
 Assumption \ref{assump2} is satisfied with this $\theta$,
then $\| D^2 u\|_{L_p(\bR^{d}_+)}\leq N\|f\|_{L_p(\bR^{d}_+)}$.
\end{corollary}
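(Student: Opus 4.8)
The plan is to read the result off from the unnumbered Corollary that immediately precedes it, by fixing the free parameters $\beta,\kappa,\mu,\theta$ in the correct order. First I would set $2\beta=1+p/d$; since $p>d$ this gives $\beta\in(1,\infty)$ and $\beta d=(d+p)/2<p$, so that $p>\beta d$ and that Corollary applies with this $\beta$, while $\beta'=\beta/(\beta-1)=(d+p)/(p-d)<\infty$. Writing $N_{0}$, $\alpha$, $\gamma$ for the constants occurring there — all now functions of $d,p,\delta$ alone, since $\beta$ is — I would then choose $\kappa\geq16$ so large that $N_{0}\kappa^{-\alpha}\leq 1/4$, and next, with $\kappa$ frozen, choose $\mu>0$ so small that $N_{0}\kappa^{d/\gamma}\mu^{1/(\beta'd)}\leq 1/4$; finally I would take $\theta=\theta(\mu,d,\delta)\in(0,1]$ to be the constant from Remark \ref{remark 7.29.1}, so that the hypothesis ``$\theta=\theta(\mu,d,\delta)$'' of Lemma \ref{osca} is in force. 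All of these quantities depend only on $d,p,\delta$.

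With these choices, the preceding Corollary would read
$$
\|D^{2}u\|_{L_{p}(\bR^{d}_{+})}\leq N_{0}\kappa^{d/\gamma}\|f\|_{L_{p}(\bR^{d}_{+})}+\tfrac{1}{2}\|D^{2}u\|_{L_{p}(\bR^{d}_{+})},
$$
and moving the last term to the left-hand side — legitimate provided $\|D^{2}u\|_{L_{p}(\bR^{d}_{+})}<\infty$ — would give the claim with $N=2N_{0}\kappa^{d/\gamma}=N(d,p,\delta)$.

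The one step that is not pure bookkeeping, and the part I expect to require care, is precisely the a priori finiteness of $\|D^{2}u\|_{L_{p}(\bR^{d}_{+})}$: the hypothesis furnishes only $D^{2}u\in L_{d}$, and the compact support of $u$ does not upgrade $L_{d}$-integrability to $L_{p}$ when $p>d$; this finiteness is also what makes the Fefferman--Stein inequality \eqref{fs} applicable to $g=D^{2}u$ in the derivation of the preceding Corollary, exactly as in \eqref{7.13.3}. To handle it I would invoke a localized $W^{2}_{p}$-estimate for \eqref{goal}: cutting $u$ off by a smooth bump and absorbing the resulting lower-order error (as in the passage leading to \eqref{7.5.6}) yields, uniformly in $x\in\bR^{d}_{+}$, a bound on $\|u\|_{W^{2}_{p}(B_{1}^{+}(x))}$ by $\|u\|_{L_{p}(B_{2}^{+}(x))}+\|f\|_{L_{p}(B_{2}^{+}(x))}$; since $u$ vanishes outside $B_{R_{0}}^{+}(z)$ and $u,f$ are locally in $L_{p}$, summing over a locally finite cover gives $D^{2}u\in L_{p}(\bR^{d}_{+})$. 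Alternatively, if in the applications this Corollary is only ever invoked for $u$ already known to lie in $\WO^{2}_{p}(\bR^{d}_{+})$, this preliminary step is vacuous and the proof reduces entirely to the parameter choices above.
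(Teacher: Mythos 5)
Your proof is correct and follows the paper's own route exactly: the paper deduces the corollary from the preceding unnumbered corollary by taking $2\beta=1+p/d$ and then choosing $\kappa$ and $\theta$ (through $\mu$) so that $N_{0}(\kappa^{d/\gamma}\mu^{1/(\beta'd)}+\kappa^{-\alpha})\le \tfrac12$, after which the $\|D^2u\|_{L_p}$ term is absorbed. The finiteness issue you raise is indeed passed over in silence in the paper, which in effect treats the inequality as an a priori estimate; in its actual use (the proof of Theorem \ref{thm3.4}) the corollary is applied to $u_z=u\,\zeta(z-\cdot)\in\WO^{2}_{p}(\bR^{d}_{+})$, so your ``alternative'' reading is the operative one and the parameter bookkeeping is the whole proof.
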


 The next theorem is the main result of this section.

\begin{theorem}
                            \label{thm3.4}
Let $p>d$. Then there exist constants $\theta\in (0,1]$
depending  only on $d,p, \delta$   and $\lambda_0$ depending
only on $d,p, \delta$, and $R_0$
such that if Assumption \ref{assump2} holds with this $\theta$, then

(i) For any
$\lambda\geq  \lambda_0 $ and any $u\in \WO^{2}_p(\bR^{d}_{+})$
 satisfying  \eqref{goal}, we have
\begin{equation}
                                               \label{7.13.7}
\lambda\|u\|_{L_p(\bR^{d}_{+})}+\|D^2u\|_{L_p(\bR^{d}_{+})}\leq
 N\| f \|_{L_p(\bR^{d}_{+})},
\end{equation}
where $N=N(d, p, \delta)$;

(ii)
For any $\lambda>0$, there exists a constant
 $N=N(d, p, \delta, R_0,\lambda)$
such that  if $u,v\in W^{2}_p(\bR^{d}_{+})$ and  $u-v\in
\WO^{2}_p(\bR^{d}_{+})$, then
\begin{equation}
                                \label{eq12.53}
\|u\|_{W^2_p(\bR^{d}_{+})} \leq N \|F(D^{2}u,
\cdot )-\lambda u
\|_{L_p(\bR^{d}_{+})}+ N\|v\|_{W^2_p(\bR^{d}_{+})},
\end{equation}
\end{theorem}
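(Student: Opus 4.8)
The plan is to bootstrap from Corollary \ref{corollary 3.6.8}, which already gives the desired $L_p$ bound on $D^2u$ for the case $\lambda = 0$ and $u$ compactly supported in a small half-ball $B^+_{R_0}(z)$, to the general statement by a partition-of-unity (localization) argument combined with an absorption of the lower-order term $\lambda u$. First I would prove part (i). Given $\lambda \geq \lambda_0$ and $u \in \WO^2_p(\bR^d_+)$ solving \eqref{goal}, fix a smooth partition of unity $\{\zeta_k\}$ subordinate to a covering of $\bR^d_+$ by half-balls of radius comparable to $R_0$, with $\sum \zeta_k^p \equiv 1$ (or $\sum\zeta_k\equiv 1$ and controlled derivatives), so that each $\zeta_k$ is supported in some $B^+_{R_0}(z_k)$. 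For each $k$, the function $u_k := \zeta_k u$ vanishes outside $B^+_{R_0}(z_k)$ and, using condition $({\rm H}_1)$ in the form of the commutator identity used already in the excerpt, satisfies an equation $\bar F_k(D^2 u_k) = f_k$ where $\bar F_k$ still obeys $({\rm H}_1)$ (one may replace $F(\cdot,x)$ by $F(\cdot,x) - \lambda u$ absorbed into $f$ first, or better: write $F(D^2 u_k, x) - \lambda u_k = \zeta_k f + g_k$ with $|g_k| \leq N(|Du| + |u|)$ coming from the homogeneity/Lipschitz continuity of $F$, exactly as in the proof of Theorem \ref{3.22.1}(iii)). Apply Corollary \ref{corollary 3.6.8} (which requires only $\lambda = 0$, so one temporarily moves $\lambda u_k$ to the right-hand side) to get
\[
\|D^2 u_k\|_{L_p(\bR^d_+)} \leq N \|\zeta_k f\|_{L_p(\bR^d_+)} + N\lambda\|u_k\|_{L_p} + N\|Du\|_{L_p(\text{supp}\,\zeta_k)} + N\|u\|_{L_p(\text{supp}\,\zeta_k)}.
\]
Summing the $p$-th powers over $k$ and using finite overlap of the supports yields $\|D^2 u\|_{L_p} \leq N\|f\|_{L_p} + N\lambda\|u\|_{L_p} + N\|Du\|_{L_p} + N\|u\|_{L_p}$.

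Next I would handle the genuinely $\lambda$-dependent lower-order terms. The term $N\|Du\|_{L_p}$ is absorbed by interpolation: $\|Du\|_{L_p} \leq \varepsilon\|D^2 u\|_{L_p} + N_\varepsilon\|u\|_{L_p}$, so after choosing $\varepsilon$ small we get $\|D^2 u\|_{L_p} \leq N\|f\|_{L_p} + N(\lambda+1)\|u\|_{L_p}$. It remains to bound $\lambda\|u\|_{L_p}$. For this I would invoke the constant-coefficient result: since $F(D^2 u, x) - F(D^2 u, x_0) $ and the commutator with $\zeta_k$ are controlled, the pointwise estimate \eqref{3.6.7} of Theorem \ref{3.22.1}(ii), applied after the same localization, gives $\lambda\|u_k\|_{L_p} \leq N\|F(D^2 u_k,\cdot) - \lambda u_k\|_{L_p} \leq N\|\zeta_k f\|_{L_p} + N(\text{lower order}) + N\|(F(\cdot,x)-\bar F_k(\cdot))D^2 u_k\|_{L_p}$; the last term is estimated by the VMO smallness exactly as in Lemma \ref{osca}, contributing a factor $\mu^{1/(\beta' d)}\|D^2 u\|_{L_p}$ that, for $\theta$ small enough, is absorbed. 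Summing over $k$ and combining with the $D^2$ estimate above, we obtain $\lambda\|u\|_{L_p} + \|D^2 u\|_{L_p} \leq N\|f\|_{L_p} + N_1\|u\|_{L_p}$ with $N_1$ independent of $\lambda$; choosing $\lambda_0 = 2N_1$ lets the $N_1\|u\|_{L_p}$ term be absorbed into $\lambda\|u\|_{L_p}$ for all $\lambda \geq \lambda_0$, which is precisely \eqref{7.13.7}. For part (ii), with $\lambda > 0$ arbitrary, write $w = u - v \in \WO^2_p$ and note $F(D^2 u, x) - \lambda u = (F(D^2 u,x) - F(D^2 v,x)) + F(D^2 v,x) - \lambda u = L w - \lambda w + (F(D^2 v,x) - \lambda v)$ for some $L \in \bL_\delta$; the standard localization-plus-interpolation argument (using \eqref{7.13.7} at level $\lambda_0$ on a cutoff of $w$, plus lower-order absorption as above, now keeping track of the extra terms $N\|v\|_{W^2_p}$ that arise whenever derivatives land on $v$) produces \eqref{eq12.53}, with $N$ now allowed to depend on $\lambda$ (and $R_0$) because one no longer has the luxury of $\lambda \geq \lambda_0$.

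The main obstacle I anticipate is the bookkeeping in the localization: one must verify that after multiplying by $\zeta_k$ the resulting equation is still of the form to which Corollary \ref{corollary 3.6.8} and \eqref{3.6.7} apply — i.e., that the commutator $F(D^2(\zeta_k u), x) - \zeta_k F(D^2 u, x)$ is genuinely lower-order (bounded by $N(|Du|+|u|)$ pointwise via homogeneity and Lipschitz continuity of $F$ in $u''$), and that all the constants coming from the finitely many balls covering $\bR^d_+$ are uniform, which works because $R_0$ is fixed and the covering has bounded overlap depending only on $d$. The second delicate point is the order of operations in absorbing: the VMO term must be absorbed with $\theta$ (hence $\mu$) chosen first, then the interpolation $\varepsilon$, then $\lambda_0$; one should check these choices are not circular, but since each absorbed constant depends only on $d, p, \delta$ and the previously fixed quantities (and $R_0$ only enters through the size of the localization balls, affecting $\lambda_0$ but not $\theta$ or the constant $N$ in \eqref{7.13.7}), the dependencies claimed in the statement come out correctly.
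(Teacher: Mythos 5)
Your localization scheme for $\|D^2u\|_{L_p}$ is essentially the paper's own argument (the paper uses a continuous family of translated cutoffs $u_z(x)=u(x)\zeta(z-x)$ integrated over $z$ instead of a discrete partition of unity, but this is only a cosmetic difference), and the bookkeeping you describe for the commutator terms is correct. The genuine gap is in how you propose to estimate $\lambda\|u\|_{L_p(\bR^d_+)}$. You want to apply the frozen-coefficient estimate \eqref{3.6.7} to $u_k$ and treat $\bar F_k(D^2u_k)-F(D^2u_k,x)$ as a perturbation, claiming that $\|(F(\cdot,x)-\bar F_k(\cdot))D^2u_k\|_{L_p}$ is bounded by $\mu^{1/(\beta'd)}\|D^2u\|_{L_p}$ ``as in Lemma \ref{osca}''. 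This does not work: Assumption \ref{assump2} only gives smallness of the \emph{average} of $h(x)=\sup_{|u''|=1}|F(u'',x)-\bar F_k(u'')|$ over small balls, while pointwise $h$ can be of size $N(\delta)$, so the only direct bound is $\|h\,|D^2u_k|\|_{L_p}\le N(\delta)\|D^2u_k\|_{L_p}$, which cannot be absorbed. The H\"older trick in Lemma \ref{osca} trades the smallness of $\dashint h$ against a \emph{higher} integrability $\dashint|D^2u|^{\beta d}$ on the same small ball, and this is usable only inside the mean-oscillation/sharp-function scheme (via the maximal function of $|D^2u|^{\beta d}$, which needs $p>\beta d$); that scheme produces the bound on $\|D^2u\|_{L_p}$ you already have, and gives no information whatsoever about $\|u\|_{L_p}$. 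Without a bound on $\lambda\|u\|_{L_p}$ you cannot handle the term $N\lambda\|u_k\|_{L_p}$ created by moving $\lambda u_k$ to the right-hand side, so \eqref{7.13.7} is not established, and in part (ii), where $\lambda$ may be small and no absorption is available, you have no control of $\|u\|_{L_p}$ at all.

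The missing ingredient, and what the paper actually does, is a zeroth-order estimate that bypasses VMO entirely: since $F$ is positive homogeneous of degree one, $F(D^2u(x),x)=a^{ij}(x)D_{ij}u(x)$ for some measurable $a\in\cS_\delta$, so $u$ solves a \emph{linear} equation $Lu-\lambda u=f$ with merely measurable coefficients, and then Theorem 3.5.15 together with the proof of Lemma 3.5.5 of \cite{Kr87} gives $\lambda\|u\|_{L_p(\bR^d_+)}\le N(d,p,\delta)\|f\|_{L_p(\bR^d_+)}$ for \emph{every} $\lambda>0$ (this is exactly how \eqref{7.31.3} was obtained in Theorem \ref{3.22.1}(ii), and that argument never used constancy of the coefficients). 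With this in hand, the paper's proof of (i) concludes by interpolation and choosing $\lambda_0$ large, and (ii) follows by writing, for $w=u-v$, $F(D^2w,x)-\lambda w=f+[F(D^2w,x)-F(D^2w+D^2v,x)]+\lambda v$ with the bracket bounded by $N|D^2v|$, so that the whole part (i) machinery (including the all-$\lambda$ zeroth-order bound, which supplies $\|w\|_{L_p}\le N\lambda^{-1}\|\cdot\|_{L_p}$) applies to $w$; note that your reformulation of (ii) through a linear operator $Lw$ also loses the nonlinear structure needed to invoke the estimates of part (i), and your suggestion to use \eqref{7.13.7} ``at level $\lambda_0$'' fails for $\lambda<\lambda_0$ because the error term $N(\lambda_0-\lambda)\|w\|_{L_p}$ cannot be absorbed.
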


\begin{proof} We suppose that
Assumption \ref{assump2} holds with
$\theta$ from Corollary~\ref{corollary 3.6.8}.

(i) Take
a nonnegative $\zeta\in C^{\infty}_{0}$ which
 has support in $B^{+}_{R _{0}}$
and is such that $\zeta^{p}$ integrates to one.
 For the parameter $z\in\bR^{d}_{+}$
define
$$
u_{ z}(x)=u(x)\zeta(z-x)
$$
and observe that for any   $x\in\bR^{d}_{+}$ we have
\begin{equation}
                                             \label{7.13.4}
\int_{\bR^{d}_{+}}\zeta^{p}(z-x)\,dz=1.
\end{equation}
Then notice that,  by the homogeneity of $F$,
for any $z\in\bR^{d}_{+}$
$$
F(D^2 u_z(x),x) =
f_{z}(x) +\lambda u_z(x),
$$
where
$$
f_{z}(x)=f (x)\zeta(z-x)+
F(D^2 u_z(x),x)-F(\zeta(z-x)D^2 u,x)
$$

By Corollary \ref{corollary 3.6.8} and the
Lipschitz continuity of $F$ in $u''$,
$$
\|\zeta(z-\cdot)|D^{2}u|\|_{L_p(\bR^{d}_{+})}^{p}
\leq N\|\zeta(z-\cdot)f\|_{L_p(\bR^{d}_{+})}^{p}
$$
$$
+N\||D\zeta(z-\cdot)|\,|Du|\,\|_{L_p(\bR^{d}_{+})}^{p}
+N\|(|D^{2}\zeta(z-\cdot)|+\lambda \zeta(z-\cdot))u\|_{L_p(\bR^{d}_{+})}^{p}.
$$
Upon integrating through this estimate
with respect to $z\in \bR^d_+$
and using \eqref{7.13.4} we get
$$
\| D^{2}u \|_{L_p(\bR^{d}_{+})}^{p}
\leq N_{1}(\| f\|_{L_p(\bR^{d}_{+})}^{p}+\lambda^{p}
\| u\|_{L_p(\bR^{d}_{+})}^{p} )
$$
$$
+N_{2}(\|   Du \|_{L_p(\bR^{d}_{+})}^{p}
+\| u\|_{L_p(\bR^{d}_{+})}^{p}),
$$
where $N_{1}=N_{1}(d,\delta,p)$ and $N_{2}=N_{2}
(d,\delta,p, R_{0})$. Furthermore, as in the proof of Theorem
\ref{3.22.1}, by analyzing the proof of Lemma 3.5.5 of \cite{Kr87},
 we have for any $\lambda>0$
$$
 \lambda \|u\|_{L_p(\bR^{d}_{+})} \leq
N\|f\|_{L_p(\bR^{d}_{+})},
$$
where $N$ depends only on $d, p$, and  $\delta$. Hence
$$
\lambda^{p} \|u\|^{p}_{L_p(\bR^{d}_{+})}+\| D^{2}u
\|_{L_p(\bR^{d}_{+})}^{p}
\leq N_{1}\| f\|_{L_p(\bR^{d}_{+})}^{p}
+N_{2}(\|   Du \|_{L_p(\bR^{d}_{+})}^{p}
+\| u\|_{L_p(\bR^{d}_{+})}^{p}),
$$
and to finish proving \eqref{7.13.7} with $N=2N_{1}$ it only remains to
use the multiplicative inequalities and choose
$\lambda_{0}(d,\delta,p,R_{0})$ sufficiently large.

(ii) Set $w=u-v$ and $f=F(D^{2}u ,x )-\lambda u$. Observe that
$$
F(D^{2}w ,x )-\lambda w=f +[F(D^{2}w ,x )-
F(D^{2}w +D^{2}v ,x  )]+\lambda v
$$
and $| F(D^{2}w ,x )-F(D^{2}w +D^{2}v ,x  )|\leq N|D^{2}v|$.
Then we see that \eqref{eq12.53} follows from
the proof of Assertion (i).
The theorem is proved.
\end{proof}

 The following solvability theorem
is a standard result, which however will not be used
later in the paper. The main emphasis here is on the method
of proof.
\begin{theorem}
                            \label{theorem 8.5.1}
Let $p>d$. Then there exist constants $\theta\in (0,1]$
depending  only on $d,p, \delta$   and $\lambda_0$ depending
only on $d,p, \delta$, and $R_0$
such that if Assumption \ref{assump2} holds with this $\theta$, then
for any $v\in W^{2}_p(\bR^{d}_{+})$, $\lambda>0$,
and $f \in L_p(\bR^{d}_{+})$,
there exists a
unique
$u\in W^{2}_p(\bR^{d}_{+})$ satisfying   \eqref{goal}   and such that
$u-v\in
\WO^2_p(\bR^{d}_{+})$.

\end{theorem}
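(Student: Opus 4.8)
The plan is to use the method of continuity, combined with the a priori estimate \eqref{eq12.53} from Theorem \ref{thm3.4}, exactly as one does for linear equations, but keeping track of the fact that $F$ is genuinely nonlinear. Fix $\lambda\ge\lambda_0$ first (the case $0<\lambda<\lambda_0$ will follow at the end by a perturbation/absorption argument, moving $(\lambda_0-\lambda)u$ to the right-hand side and using the Lipschitz dependence of the estimate on lower-order data, together with the a priori bound for large $\lambda$). By replacing $u$ with $u-v$ it suffices to treat the case $v=0$, i.e.\ to solve \eqref{goal} in $\WO^2_p(\bR^d_+)$ for given $f\in L_p(\bR^d_+)$; the general inhomogeneous boundary data $v\in W^2_p$ is recovered by solving for $w=u-v$ with right-hand side $f-F(D^2v,\cdot)+\lambda v\in L_p$.

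Next I would set up the continuity path. For $s\in[0,1]$ define
\[
F_s(u'',x)=(1-s)\,\bar F_0(u'')+s\,F(u'',x),
\]
where $\bar F_0$ is any fixed $x$-independent function satisfying $({\rm H}_1)$ — for instance $\bar F_0(u'')=\delta^{-1}\,\mathrm{tr}\,u''^{+}-\delta^{-1}\,\mathrm{tr}\,u''^{-}$ adjusted to be homogeneous of degree one and to lie in $\cS_\delta$, or simply $(F)_{B_1^+}$ from Remark \ref{remark 7.30.1b}. Each $F_s$ is convex, positively homogeneous of degree one, and satisfies $({\rm H}_1)$ with the same $\delta$; moreover each $F_s$ satisfies Assumption \ref{assump2} with the same $\theta$ (with candidate $\bar F_{s}=(1-s)\bar F_0+s\bar F$), so the a priori estimate of Theorem \ref{thm3.4}(i) holds \emph{uniformly in $s$}: any $u\in\WO^2_p(\bR^d_+)$ with $F_s(D^2u,x)-\lambda u=f$ satisfies $\|u\|_{W^2_p}\le N\|f\|_{L_p}$ with $N$ independent of $s$. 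At $s=0$ the equation is $\bar F_0(D^2u)-\lambda u=f$, which is solvable in $\WO^2_p(\bR^d_+)$ by Theorem \ref{3.22.1}(iii). I then let $S$ be the set of $s\in[0,1]$ for which $F_s(D^2u,x)-\lambda u=f$ is solvable in $\WO^2_p$ for every $f\in L_p$; $S$ is nonempty and, by the uniform a priori estimate, closed. To see $S$ is open, fix $s_0\in S$ and, given $s$ near $s_0$, solve by iteration: having $u_k$, let $u_{k+1}$ solve $F_{s_0}(D^2u_{k+1},x)-\lambda u_{k+1}=f+F_{s_0}(D^2u_k,x)-F_s(D^2u_k,x)$; the difference $u_{k+1}-u_k$ solves an equation of the form $Lw-\lambda w=(s_0-s)[\text{bounded }\delta^{-1}|D^2(u_k-u_{k-1})|\text{-type term}]$ for some $L\in\bL_\delta$ depending on the point, and by the linear estimate corresponding to \eqref{7.13.7} this gives $\|u_{k+1}-u_k\|_{W^2_p}\le N|s_0-s|\,\|u_k-u_{k-1}\|_{W^2_p}$, a contraction for $|s_0-s|$ small. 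Hence $S=[0,1]$ and in particular $1\in S$, which is the desired existence at $\lambda\ge\lambda_0$; then one removes the restriction on $\lambda$ as indicated above (for $0<\lambda<\lambda_0$, rewrite $F(D^2u,x)-\lambda u = f$ as $F(D^2u,x)-\lambda_0 u = f-(\lambda_0-\lambda)u$ and run the same continuity/iteration argument in $u$, the contraction now coming from the $W^2_p\hookrightarrow$ lower-order terms controlled by the estimate with constant independent of $\lambda$ — here one uses multiplicative inequalities as in the proof of Theorem \ref{thm3.4}(i)).

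Uniqueness is immediate and I would dispatch it first: if $u_1,u_2\in W^2_p$ both solve \eqref{goal} with $u_1-u_2\in\WO^2_p$, then by the standard fact recalled repeatedly in the paper, $F(D^2u_1,x)-F(D^2u_2,x)=L(u_1-u_2)$ for some $L=a^{ij}D_{ij}\in\bL_\delta$ (with measurable, possibly $x$-dependent, $(a^{ij})$), so $w=u_1-u_2$ satisfies $Lw-\lambda w=0$, $w\in\WO^2_p(\bR^d_+)$, and the linear a priori estimate \eqref{7.13.7} (valid for this $L$ with the same constant) forces $w\equiv0$; for $0<\lambda<\lambda_0$ one instead invokes the maximum principle for $L-\lambda$ with $\lambda>0$. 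The step I expect to be the main obstacle is verifying the \emph{uniform} applicability of Theorem \ref{thm3.4}(i) along the path and, relatedly, that the perturbation terms $F_{s_0}(D^2u_k,x)-F_s(D^2u_k,x)$ in the iteration are genuinely $O(|s_0-s|)$ in $L_p$ with a constant controlled by $\|u_k\|_{W^2_p}$ — this uses only $F_{s_0}-F_s=(s-s_0)(\bar F_0-F)$ together with the uniform Lipschitz bound $|\bar F_0(u'')-F(u'',x)|\le N(\delta)|u''|$ from $({\rm H}_1)$, so in fact it is routine; the only genuine care needed is to check that each $F_s$ still satisfies Assumption \ref{assump2} with the \emph{same} $\theta$, which holds because the VMO oscillation of a convex combination is bounded by the convex combination of the oscillations.
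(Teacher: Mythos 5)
There is a genuine gap at the heart of your continuity argument, namely the openness step. Your contraction requires an estimate of the form $\|u_{k+1}-u_k\|_{W^{2}_p(\bR^{d}_{+})}\le N\|g_{k+1}-g_k\|_{L_p(\bR^{d}_{+})}$ for two solutions of the same equation $F_{s_0}(D^2\,\cdot\,,x)-\lambda\,\cdot$ with different right-hand sides. Writing the difference $w=u_{k+1}-u_k$ as a solution of $a^{ij}(x)D_{ij}w-\lambda w=g_{k+1}-g_k$ only produces coefficients $a^{ij}$ that are merely measurable in $\cS_{\delta}$ (they depend pointwise on $D^2u_{k+1}(x)$ and $D^2u_k(x)$), and the VMO structure of $F_{s_0}$ does not pass to this linearization. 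Estimate \eqref{7.13.7} is an estimate for solutions of the nonlinear VMO equation itself, not for arbitrary $L\in\bL_{\delta}$; for a general measurable $L\in\bL_{\delta}$ the $W^{2}_p$ estimate with $p>d$ is false (only the small-exponent estimate of Lemma \ref{lemma 8.11.1} survives), so your assertion that \eqref{7.13.7} is ``valid for this $L$ with the same constant'' is exactly the point that fails. The same objection applies to your reduction of $0<\lambda<\lambda_0$ to $\lambda=\lambda_0$. (Uniqueness itself is fine, but should be justified, as the paper does, by Alexandrov's estimate/maximum principle applied to $Lw-\lambda w=0$ with $w\in\WO^{2}_p(\bR^{d}_{+})$, which needs only an $L_\infty$ bound and not \eqref{7.13.7}.) Your closedness step is also incomplete: a uniform $W^{2}_p$ bound gives only weak convergence of $D^2u_j$, and passing to the limit in $F_{s_j}(D^2u_j,x)$ requires a separate stability argument.

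The paper avoids the method of continuity precisely because no $W^{2}_p$ theory is available for the linearized, measurable-coefficient operators. Its route is: first assume $F$ smooth in $x$ with smooth $f,v$, and obtain a classical solution from the classical theory (after mollification in $u''$, the function $-F(-u'',x)-\lambda u$ falls into the class $\bar{\cF}$ of \cite{Kr87}); check that this solution lies in $W^{2}_p$ so that the a priori estimate \eqref{eq12.53} applies; then, for general $F$, mollify in $x$ to get $F_n$ (which satisfy $({\rm H}_1)$ and Assumption \ref{assump2} with the same constants), solve the approximate problems, and pass to the limit using the envelopes $\bar F_{n_0}=\sup_{n\ge n_0}F_n$ and $\underline F_{n_0}=\inf_{n\ge n_0}F_n$ together with Theorems 3.5.15 and 3.5.6 of \cite{Kr87}. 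This envelope-and-stability device is exactly what replaces your contraction and also resolves the limit passage that your closedness step leaves open; to salvage a continuity-type scheme you would need a solvability and estimate theory for the linearized problems, which does not exist in this $W^{2}_p$, $p>d$, setting.
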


\begin{proof}

We take $\theta$  from Theorem \ref{thm3.4}
and first we assume that $F(u'',x)$ is
infinitely  differentiable with respect to $x\in\bR^{d}$ and each of its
derivatives is
continuous in $(u'',x)$ and Lipschitz continuous
in $u''$ uniformly with respect to $x$
(in particular, if in addition $F_{x^{k}}$ are
differentiable with respect to $u''$ for $u''\ne0$, then
$F_{x^{k}u_{ij}}$ are bounded for $u''\ne0$).
  By mollifying $F(u'',x)$ with respect to $u''$ and
using its  properties listed above we obtain a sequence $F^{n}(u'',x)$
of functions infinitely differentiable in $(u'',x)$,
converging to $F$ as $n\to\infty$,
convex in $u''$ and such that, for all $n$ and all values of the arguments
and $\xi\in\bR^{d}$ and $v''\in\cS$, we have
$$
\delta|\xi|^{2}\leq F^{n}_{u_{ij}}\xi^{i}\xi^{j}
\leq\delta^{-1}|\xi|^{2},\quad |F^{n}-F^{n}_{u_{ij}}u_{ij}|\leq 1,
$$
$$
-F^{n}_{u_{ij}x^{k}}v''_{ij}\xi^{k}-F^{n}_{ x^{k}x^{r}}\xi^{k}\xi^{r}
\leq N(|v''| +|\xi| )|\xi|,
$$
where $N$ is independent of $n$.
It follows that the function
$-F(-u'',x)-\lambda u$ is of class $\bar{\cF}$ introduced in \cite[\S 6.1]{Kr87}.
We also take
$f\in C^{\infty}_{0}(\bR^{d}_{+})$ and $v\in C^{\infty}(\bR^{d})$ with
compact support. Then by classical results
(see, for instance, \cite{Kr87} or  \cite{Tr83}) equation  \eqref{goal}
with boundary condition $u=v$ on $\partial\bR^{d}_{+}$ has
a unique solution $u$, which is twice continuously differentiable
in $\bar{\bR}^{d}_{+}$. As in the proof of Theorem
\ref{3.22.1} we have that $u\in\WO^{2}_{p}(\bR^{d}_{+})$
and estimate \eqref{eq12.53} holds (with $F(D^{2}u ,x )-\lambda u=f$).

Next, we consider the general situation.
Take the function $\zeta$ from the proof of assertion (i)
 of Theorem \ref{thm3.4}
but such that \eqref{7.13.4} holds with $p=1$. For $n=1,2,...$
define
$$
F_{n}(u'',x)=\int_{\bR^{d}_{+}}F (u'',x+y/n)\zeta(y)\,dy.
$$
Obviously, these infinitely differentiable functions of $x$
are positive homogeneous of degree one and satisfy
$(\rm H_{1})$ and Assumption \ref{assump2} with the same parameters
as $F$ does.

Then we approximate $f$ and $v$ in appropriate norms with functions
 $f_{n}$ and $v_{n}$ possessing the properties described
above. This yields a sequence of $u_{n}\in W^{2}_{p}(\bR^{d}_{+})$
with  uniformly  bounded $W^{2}_{p}(\bR^{d}_{+})$ norms and
such that $u_{n}-v_{n}\in  \WO^{2}_{p}(\bR^{d}_{+})$ and
$$
F_{n}(D^{2}u_{n},x)-\lambda u_{n}(x)=f_{n}(x).
$$
By embedding theorems there is a  $u \in W^{2}_{p}(\bR^{d}_{+})$
and a subsequence,which is still denoted by $\{u_{n}\}$,  such that
$u_{ n }\to u$ uniformly on compact subsets of
$\bar{\bR}^{d}_{+}$. In particular $u=v$ on $\partial\bR^{d}_{+}$,
so that $u -v \in  \WO^{2}_{p}(\bR^{d}_{+})$.

Since $f_{n}\to f$ in $L_{p}(\bR^{d}_{+})$, we may assume that
$f_{ n }\to f$ (a.e.). Therefore, if we define
$$
\bar{F}_{n_{0}}(u'',x)=\sup_{n\geq n_{0}}F_{n}(u'',x),\quad
\underline{F}_{n_{0}}(u'',x)=\inf_{n\geq n_{0}}F_{n}(u'',x),
$$
then, for any $n_{0}$, (a.e.)
$$
\nliminf_{n\to\infty}\bar{F}_{n_{0}}(D^{2}u_{n},x)
\geq
\nliminf_{n\to\infty}F_{n}(D^{2}u_{n},x)=f(x) + \lambda u(x),
$$
$$
\nlimsup_{n\to\infty}\underline{F}_{n_{0}}(D^{2}u_{n},x)
\leq
\nlimsup_{n\to\infty}F_{n}(D^{2}u_{n},x)=f(x)+\lambda u(x).
$$
It follows by Theorems 3.5.15 and 3.5.6
of \cite{Kr87} that for any $n_{0}$ (a.e.)
\begin{equation}
                                                     \label{8.5.1}
\bar{F}_{n_{0}}(D^{2}u ,x)\geq f(x)+\lambda u(x)
\geq \underline{F}_{n_{0}}(D^{2}u ,x).
\end{equation}

Now observe that, for each $u''\in\cS$,
$F_{n}(u'',x)\to F(u'',x)$  (a.e.). Since both parts are
positive homogeneous and
Lipschitz continuous in $u''$ with constant independent of $n$
we also have
 $$
 \bar{ F}_{n_{0}}(u'',x)- \underline{F}_{n_{0}}(u'',x)
\leq \varepsilon_{n_{0}}(x)|u''|,
 $$
where $\varepsilon_{n_{0}}(x)\to0$ (a.e.) as $n_{0}\to\infty$.
After that to finish proving the existence
it only remains to pass to the limit
in \eqref{8.5.1}.

Uniqueness is proved in the same way as in Theorem \ref{3.22.1}.
The theorem is proved.
\end{proof}

\mysection{Proof of Theorem \protect\ref{mainthm2}}
                    \label{SecproofThm2}

First we state a generalization of a result
of \cite{Kr10}. The point is that in that paper the counterpart
of our Assumption \ref{assump2} is formulated
as \eqref{8.5.2} (in its elliptic version).

\begin{theorem}
                                            \label{theorem 8.5.2}
Let $p>d$. Then there exist constants $\theta\in (0,1]$
depending  only on $d,p, \delta$   and $\lambda_0$ depending
only on $d,p, \delta$, and $R_0$
such that if Assumption \ref{assump2} holds with this $\theta$
and $\cD=\bR^{d}$, then

(i) For any $u\in W^{2}_p(\bR^{d} )$
 satisfying  \eqref{goal}, we have
\begin{equation}
                                               \label{8.5.3}
\lambda\|u\|_{L_p(\bR^{d} )}+\|D^2u\|_{L_p(\bR^{d} )}\leq
 N\| f \|_{L_p(\bR^{d} )},
\end{equation}
if
$\lambda\geq  \lambda_0 $,
where $N=N(d, p,\delta)$, and we have
\begin{equation}
                                \label{8.5.4}
\|u\|_{W^2_p(\bR^{d}_{+})} \leq N \|f\|_{W^2_p(\bR^{d}_{+})}
\end{equation}
if $\lambda>0$ with $N=N(d, p, \delta, R_0,\lambda)$.

(iii) For any   $\lambda>0$
and $f \in L_p(\bR^{d} )$,
there exists a
unique
$u\in W^{2}_p(\bR^{d} )$ satisfying   \eqref{goal}.
\end{theorem}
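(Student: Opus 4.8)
The plan is to treat the whole-space problem \eqref{goal} with $\cD=\bR^d$ as a direct analogue of the half-space theory already developed in Sections \ref{sec2} and \ref{sec3}, but with the boundary-related machinery stripped away. First I would redo the chain of interior estimates without the half-space: the role of Lemma \ref{3.6.2} and Lemma \ref{3.6.5} (the odd-extension tricks) disappears entirely, and one works directly with balls $B_r(x_0)$ instead of $B_r^+(x_0^1)$. The $C^{2+\alpha}$ estimate for the homogeneous constant-coefficient equation $F(D^2v)=0$ on a ball (interior Evans--Krylov, i.e. the interior version of what Lemma \ref{3.6.3} used from \cite{SA} or \cite{Sa94}) together with the $W^2_\gamma$ estimate of Lemma \ref{lemma 8.11.1} (which is already stated for balls in $\bR^d$, not half-balls) gives the analogue of Lemma \ref{osc}: for $\kappa\ge 16$ and $u\in W^2_d$ solving $F(D^2u)=f$ on $B_{\kappa r}(x_0)$,
\begin{equation*}
\dashint_{B_r(x_0)}\dashint_{B_r(x_0)}|D^2u(x)-D^2u(y)|^\gamma\,dx\,dy
\le N\kappa^d\Big(\dashint_{B_{\kappa r}(x_0)}|f|^d\Big)^{\gamma/d}
+N\kappa^{-\gamma\alpha}\Big(\dashint_{B_{\kappa r}(x_0)}|D^2u|^d\Big)^{\gamma/d}.
\end{equation*}

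Next I would run the VMO perturbation exactly as in Lemma \ref{osca}: replace $F(\cdot,x)$ by its average $\bar F=(F)_{B_{R_0}(z)}$ or $(F)_{B_{\kappa r}(x_0)}$, write $\bar F(D^2u)=\hat f=\bar F(D^2u)-F(D^2u,x)+f$, apply the frozen-coefficient estimate to $\hat f$, and control $\dashint|\bar F(D^2u)-F(D^2u,x)|^d$ by $h(x)=\sup_{|u''|=1}|F(u'',x)-\bar F(u'')|$ via Hölder with exponents $\beta,\beta'$, using Remark \ref{remark 7.29.1} to bound $\dashint h$ by $\mu$. Then the sharp-function $g^\#_\gamma$ with respect to dyadic cubes of $\bR^d$ (rather than $\bR^d_+$), the Fefferman--Stein inequality \eqref{fs} (whose $\bR^d$ version is classical and is the case underlying \cite{Kr10}), and the Hardy--Littlewood maximal function theorem give, for a solution vanishing outside $B_{R_0}(z)$ and $p>\beta d$,
\begin{equation*}
\|D^2u\|_{L_p(\bR^d)}\le N_0\kappa^{d/\gamma}\|f\|_{L_p(\bR^d)}
+N_0\big(\kappa^{d/\gamma}\mu^{1/(\beta'd)}+\kappa^{-\alpha}\big)\|D^2u\|_{L_p(\bR^d)}.
\end{equation*}
Taking $2\beta=1+p/d$ and absorbing, I obtain the $\bR^d$ analogue of Corollary \ref{corollary 3.6.8}: $\|D^2u\|_{L_p(\bR^d)}\le N\|f\|_{L_p(\bR^d)}$. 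The partition-of-unity/localization step is then verbatim the proof of Theorem \ref{thm3.4}(i): cut off $u$ by $\zeta(z-\cdot)$, use homogeneity of $F$ to see $F(D^2u_z,x)=f_z+\lambda u_z$ with a controlled error, apply the local estimate, integrate in $z$, use the $L_p$ bound $\lambda\|u\|\le N\|f\|$ coming from the linearization $F(D^2u,x)-F(D^2v,x)=L(u-v)$ with $L\in\bL_\delta$ and Lemma 3.5.5 of \cite{Kr87}, and absorb first-order terms by the multiplicative inequality for $\lambda$ large; this yields \eqref{8.5.3} for $\lambda\ge\lambda_0$, and \eqref{8.5.4} follows by the same perturbation argument used for \eqref{eq12.53}. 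For existence in part (iii) I would mimic Theorem \ref{theorem 8.5.1}: mollify $F$ in $x$ and in $u''$ to get smooth $F_n$ in the admissible class, solve the approximate equations classically in $C^{2+\alpha}(\bR^d)$ by the whole-space theory of \cite{Kr87,Tr83}, get uniform $W^2_p$ bounds from \eqref{8.5.4}, extract a locally uniformly convergent subsequence, and pass to the limit using the $\sup/\inf$ envelopes $\bar F_{n_0},\underline F_{n_0}$ and Theorems 3.5.15, 3.5.6 of \cite{Kr87}; uniqueness is again the linearization argument.

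The main obstacle is not any single estimate but making sure the half-space input results quoted from \cite{SA}, \cite{Sa94}, \cite{Kr10}, and \cite{Kr87} are invoked in their (easier) interior/whole-space form with all constants depending only on $d,\delta,p$ and not on any boundary data; in particular one must check that the Fefferman--Stein bound \eqref{fs} and the dyadic sharp-function comparison hold on $\bR^d$ with the same structure (they do, this being the classical case), and that Lemma \ref{lemma 8.11.1} is genuinely an $\bR^d$ statement (it is, as written). Since the excerpt explicitly says ``it is much easier and shorter to explain the main ideas in the elliptic case'' and that one could deduce things from parabolic counterparts, I expect the actual proof in the paper to simply say that everything in Sections \ref{sec2}--\ref{SecproofThm2} goes through, with simplifications, after replacing $\bR^d_+$ by $\bR^d$ and deleting the reflection lemmas, which is exactly the route sketched above.
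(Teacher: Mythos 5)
Your proposal is correct and follows essentially the same route as the paper, which in fact only sketches this proof: the constant-coefficient whole-space estimates are quoted from \cite{Kr10} (your interior analogue of Lemma \ref{osc} is Lemma 2.4 there), the passage to VMO $F$ and the estimates \eqref{8.5.3}--\eqref{8.5.4} are obtained exactly as in the proof of Theorem \ref{thm3.4}, and the solvability is derived by the mollification/envelope argument presented in the proof of Theorem \ref{theorem 8.5.1}. So your plan matches the paper's intended argument, merely spelling out steps the paper delegates to \cite{Kr10} and to earlier proofs.
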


We only give a few comments on the proof of this theorem.
In case $F$ is independent of $x$ the a priori estimate
\eqref{8.5.3} is obtained in \cite{Kr10} for all $\lambda>0$.
When $F$ depends on $x$ one obtains the estimate \eqref{8.5.3} for
$\lambda
\geq\lambda_{0}$ and \eqref{8.5.4} for $\lambda>0$
as in the proof of Theorem \ref{thm3.4}. After the necessary
a priori estimates are obtained, it is stated
in  \cite{Kr10} that the solvability theorems are
derived in a standard way without giving any specific
details. This standard way is
presented in the proof of Theorem \ref{theorem 8.5.1}.

\begin{proof}[Proof of Theorem \ref{mainthm2}]
As in the proof of Theorem \ref{thm3.4}, we first establish
\eqref{eq00.34} as an a priori estimate and
the case of general $g$ is reduced to the case $g\equiv 0$ by replacing
the unknown function $u$ with $u-g$.
We will see that to obtain the a priori estimate
we do not need    condition  (H$_3$).

Observe that Theorems \ref{thm3.4} and \ref{theorem 8.5.2}
with $\lambda=\lambda_{0}$ imply that
$$
\|D^2u\|_{L_p(\bR^{d}_{+})}\leq
 N(\|F(D^{2}u) \|_{L_p(\bR^{d}_{+})}+\| u \|_{L_p(\bR^{d}_{+})}),
\quad   \forall u\in \WO^{2}_p(\bR^{d}_{+}) ,
$$
\begin{equation}
                                                        \label{8.10.1}
\|D^2v\|_{L_p(\bR^{d}) }\leq
 N(\|F(D^{2}v) \|_{L_p(\bR^{d} )}+\| v\|_{L_p(\bR^{d} )}),
\quad   \forall v\in W^{2}_p(\bR^{d} ),
\end{equation}
where $N=N(d, p, \delta,R_{0})$ (provided that $\theta
=\theta(d,p,\delta)$ is chosen appropriately).

Now assume that $u\in\WO^{2}_{p}(\cD)$ satisfies
\begin{equation}
                                                \label{7.14.1}
 F(D^{2}u( x), x)+G(D^{2}u( x),D u( x),u( x), x) =0.
\end{equation}
We move the term $G$ to the right-hand side and
define
$$
f(x):=-G(D^{2}u( x),D u( x),u( x), x).
$$
After that
by using the technique  based on flattening the boundary,
partitions of unity, and interpolation inequalities
allowing one to estimate $Du$ through $D^{2}u$ and $u$  and
also using
\eqref{8.10.1} we obtain that
\begin{equation}
                        \label{eq16.07}
 \|D^2 u\|_{L_p(\cD)}
\le N_1(\| f \|_{L_p(\cD)}+\| u \|_{L_p(\cD)}),
\end{equation}
provided that $\theta$ is sufficiently small
 depending only on $d$, $p$,
$\delta$, and the $C^{1,1}$ norm of $\partial D$.
Here $N_1$ depends only on $d$,
$p$, $\delta$,  $R_0$, and the  $C^{1,1}$  norm of $\partial \cD$.
Below by $N_{i}$ we denote the same type of constants as $N_{1}$.
It follows from the definition of $f$ and $({\rm H}_2)$ that,
for any $s>0$,
$$
\|f\|_{L_p(\cD)}\le \|\chi(|D^2 u|)D^2 u\|_{L_p(\cD)}+
K(\|Du\|_{L_p(\cD)}+\|u\|_{L_p(\cD)})
$$
$$
+\|\bar{G}\|_{L_p(\cD)}\le \chi(s)\|D^2
u\|_{L_p(\cD)}+\|\chi\|_{L_\infty}s|\cD|^{1/p}
$$
\begin{equation}
                            \label{eq23.45}
+
K(\|Du\|_{L_p(\cD)}+\|u\|_{L_p(\cD)})+\|\bar{G}\|_{L_p(\cD)}.
\end{equation}
Upon taking $s$ large so that $N_1\chi(s)\le 1/2$,
we get from \eqref{eq16.07}, \eqref{eq23.45}, and the
interpolation inequality that
\begin{equation}
                        \label{eq00.31}
\|u\|_{W^2_p(\cD)}
\le  N_2(\|u\|_{L_p(\cD)}+\|\bar G\|_{L_p(\cD)}+
\|\chi\|_{L_\infty}s|\cD|^{1/p}).
\end{equation}

To estimate the term $ \|u\|_{L_p(\cD)}$ on the right-hand
side of \eqref{eq00.31},
we rewrite \eqref{7.14.1} as
$$
F(D^{2}u( x), x)+G(D^{2}u( x),D u( x),u( x), x)
$$
\begin{equation}
                            \label{eq09.30}
-G(D^{2}u( x),0,0, x) =-G(D^{2}u(x),0,0,x).
\end{equation}
Note that, by conditions $({\rm H}_1)$ and $({\rm H}_2)$, there
exist $L\in \bL_\delta$  and bounded measurable functions
$b=(b^1,...,b^d)$ and $c$ such that the left-hand side of \eqref{eq09.30}
can be represented as $Lu+b^iD_i u-cu $.  Since
 $G$ is nonincreasing in $u$, we have $c\ge 0$.
Therefore, by Alexandrov's estimate
$$
\sup_{\cD}|u|,\quad
 \|u\|_{L_p(\cD)}\le N \|G(D^{2}u( \cdot),0,0, \cdot)\|_{L_p(\cD)},
$$
where $N=N(d,p,\delta,{\rm diam} (\cD) )$.
Again by condition $({\rm H}_2)$, for any $s_1>0$,
$$
 \|u\|_{L_p(\cD)}\le N\|\chi(|D^2 u|)D^2 u\|_{L_p(\cD)}+
N\|\bar{G}\|_{L_p(\cD)}
$$
\begin{equation}
                            \label{eq09.43}
\le N_{3}(\chi(s_1)\|D^2 u\|_{L_p(\cD)}+ \|\chi\|_{L_\infty}
s_1|\cD|^{1/p}+ \|\bar{G}\|_{L_p(\cD)}).
\end{equation}
Combining \eqref{eq00.31} with \eqref{eq09.43} and taking
$s_1$ sufficiently large so that
$ N_2N_{3} \chi(s_1)\le 1/2$, we arrive at
$$
\|u\|_{W^2_p(\cD)}
\le N_4\|\bar G\|_{L_p(\cD)}+N_4(s+s_1) \|\chi\|_{L_\infty}|\cD|^{1/p},
$$
which is \eqref{eq00.34} in the case that $g=0$.

To prove the existence and uniqueness of solutions, we first assume
that $H:=F+G$ and $g$ are
smooth in $x$ and the domain is of class
$C^{2+\alpha}$. Then, under conditions $({\rm H}_2)$ and $({\rm H}_3)$,
it is known (cf. the proof of Theorem \ref{theorem 8.5.1}) that there
is a unique
$C^{2}(\cD)$ solution $u$   with a given smooth boundary data.
This solution is certainly in $W^{2}_{p}(\cD)$ and we have
an estimate of its $W^{2}_{p}(\cD)$ norm. After that
we mollify the original $F$ and $G$ in $x$, mollify $g$,
and approximate $\cD$  by a sequence of increasing  smooth
domains  $\cD_n$  with the $C^{1,1}$ norm under control. We take
these domains because  otherwise
after mollifications $F$ may fail
to satisfy \eqref{7.28.1} for all $x\in\cD$.
After that it suffices to repeat the last part of the proof
of Theorem \ref{theorem 8.5.1}.
To see that the limiting function $u$ satisfies $u-g\in \WO^2_p(\cD)$,
we use the fact that $u_n,g_n\in W^2_p(\cD_n)$ with uniformly bounded norms and the fact that $(u_n-g_n)I_{\cD_n}\in \WO^1_p(\cD)$ with uniformly bounded norms.
Of course, while passing to the limits
and proving uniqueness we use that
for any $u,v\in W^{2}_{p}(\cD)$ there is an operator
$L\in\bL_{\delta}$ and  bounded measurable functions $b=(b^1,...,b^d)$
and $c$ satisfying $|b|\le K,\quad 0\le c\le K$
 such that
 $$
H(D^{2}u,Du,u,x)-H(D^{2} v,Dv,v,x)
$$
$$
=H(D^{2}u,Du,u,x)-H(D^{2}v,Du,u,x)
$$
$$
+H(D^{2}v,Du,u,x)-H(D^{2}v,Dv,v,x)
$$
$$
=L(u-v)+b^{i}D_{i}(u-v)-c(u-v).
$$
The theorem is proved.
\end{proof}

\mysection{Parabolic Bellman's  equations in $\bR^{d+1}$
with constant coefficients}
                                \label{sec4}

In this section we consider the equation
\begin{equation}
                    \label{para0}
\partial_t u(t,x) + F(D^2u)= f(t, x),
\end{equation}
in the whole space. Due to the same reasons as in Section \ref{sec2},
equation
\eqref{para0} can be written as a parabolic Bellman's equation
$$
\partial_t u(t,x) + \sup_{\omega\in \Omega} [a^{ij}(\omega)
 D_{ij} u(t,x) ]= f(t, x).
$$

For $r>0$, introduce $Q_r:=Q_r(0,0)$.
The following is Lemma 4.2.2 in \cite{Kr08}.
\begin{lemma}  \label{3.7.1}
Let $p\in [1,\infty)$. Then there is a constant $N=N(d,p)$ such that for any $r\in(0,\infty)$ and $u\in C^{\infty}_{loc}(\bR^{d+1})$ we have
$$
\dashint_{Q_{r}} |Du -(Du)_{Q_{r}}|^p \,dx\,dt
\leq N r^p\dashint_{Q_{r}} (|D^{2}u|+|\partial_{t}u|)^p \,dx\,dt,
$$
$$
\dashint_{Q_{r}} |u(t,x)- (u)_{ Q_{r}}- x^{i}
 (D_{i}u)_{Q_{r}}|^p \,dx\,dt
$$
$$
\leq Nr^{2p} \dashint_{Q_{r}}(|D^{2}u|+|\partial_{t}u|)^p\,dx\,dt.
$$
\end{lemma}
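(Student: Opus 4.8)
The plan is to first remove $r$ by parabolic scaling, and then prove the two inequalities for $r=1$. If $\tilde u(t,x):=u(r^2t,rx)$, then $D\tilde u=r(Du)(r^2t,rx)$, $D^2\tilde u=r^2(D^2u)(r^2t,rx)$, $\partial_t\tilde u=r^2(\partial_tu)(r^2t,rx)$, and the change of variables $(\tau,y)=(r^2t,rx)$ turns $Q_1$ into $Q_r$; the powers $r^p$ and $r^{2p}$ on the right-hand sides are exactly what makes both sides of each inequality scale identically. So it suffices to treat $Q_1=(0,1)\times B_1$.

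For the first inequality, fix once and for all a function $\zeta\in C^\infty_0(B_1)$ with $\zeta\ge 0$ and $c_0:=\int_{B_1}\zeta>0$, and set $\psi^i(t):=c_0^{-1}\int_{B_1}D_iu(t,y)\zeta(y)\,dy$. Since $(Du)_{Q_1}=\dashint_{Q_1}Du(s,y)\,dy\,ds$, Jensen's inequality gives
$$
\dashint_{Q_1}|Du-(Du)_{Q_1}|^p\le\dashint_{Q_1}\dashint_{Q_1}|Du(t,x)-Du(s,y)|^p\,dx\,dt\,dy\,ds,
$$
and I would insert $\pm\psi(t)\mp\psi(s)$ and use convexity of $|\cdot|^p$ to split into three pieces. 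The two ``spatial'' pieces, of the form $\dashint_{Q_1}|Du(t,\cdot)-\psi(t)|^p$, are handled by noting that $\psi(t)$ differs from $(Du(t,\cdot))_{B_1}$ only by a weighted average of $Du(t,\cdot)-(Du(t,\cdot))_{B_1}$, so the classical Poincaré inequality on the ball $B_1$ bounds them by $N\dashint_{Q_1}|D^2u|^p$. For the remaining piece, integration by parts in $y$ (legitimate because $\zeta$ is compactly supported in $B_1$) gives $\psi^i(t)=-c_0^{-1}\int_{B_1}u(t,y)D_i\zeta(y)\,dy$, hence
$$
\psi^i(t)-\psi^i(s)=-c_0^{-1}\int_{B_1}\Big(\int_s^t\partial_\tau u(\tau,y)\,d\tau\Big)D_i\zeta(y)\,dy,
$$
so $|\psi(t)-\psi(s)|\le N\|\partial_tu\|_{L_1(Q_1)}\le N(\dashint_{Q_1}|\partial_tu|^p)^{1/p}$ uniformly in $s,t\in(0,1)$. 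Collecting the three bounds and using $|D^2u|^p+|\partial_tu|^p\le(|D^2u|+|\partial_tu|)^p$ gives the first inequality for $r=1$.

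For the second inequality, put $a^i:=(D_iu)_{Q_1}$ and $v(t,x):=u(t,x)-a^ix^i$. Then $D^2v=D^2u$, $\partial_tv=\partial_tu$, $(Dv)_{Q_1}=0$, and, because $B_1$ is centered at the origin so $(x^i)_{Q_1}=0$, we have $v-(v)_{Q_1}=u-(u)_{Q_1}-x^i(D_iu)_{Q_1}$; thus it suffices to estimate $\dashint_{Q_1}|v-(v)_{Q_1}|^p$. Again $\dashint_{Q_1}|v-(v)_{Q_1}|^p\le\dashint_{Q_1}\dashint_{Q_1}|v(t,x)-v(s,y)|^p$, and I would write $v(t,x)-v(s,y)=[v(t,x)-v(t,y)]+[v(t,y)-v(s,y)]$. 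The spatial term, averaged over $x,y\in B_1$ at fixed $t$, is controlled by Poincaré by $N\dashint_{B_1}|Dv(t,\cdot)|^p$; integrating in $t$ and then using $(Dv)_{Q_1}=0$ together with the first inequality \emph{applied to $v$} bounds it by $N\dashint_{Q_1}(|D^2u|+|\partial_tu|)^p$. The time term is $v(t,y)-v(s,y)=\int_s^t\partial_\tau v(\tau,y)\,d\tau$, directly bounded in $L_p$ by $N\dashint_{Q_1}|\partial_tu|^p$. Translating back to $u$ and undoing the scaling of the first paragraph completes the proof.

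\textbf{Main obstacle.} The only non-routine point is the time term in the first inequality: a naive fundamental-theorem-of-calculus estimate on $Du(t,\cdot)-Du(s,\cdot)$ produces $\partial_tDu=D(\partial_tu)$, which is \emph{not} controlled by $\partial_tu$ alone and would make the lemma false as stated. The remedy is to compare $Du$ at different times only after averaging against the fixed test function $\zeta$ and shifting the spatial derivative onto $\zeta$; this is precisely what trades $D(\partial_tu)$ for $\partial_tu$ at the cost of the harmless constant $c_0^{-1}\|D\zeta\|_{L_\infty}$. Everything else --- the scaling, the Poincaré inequality on the ball, and the Jensen/H\"older bookkeeping --- is standard.
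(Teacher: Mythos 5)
Your proof is correct. Note, however, that the paper does not prove this lemma at all: it simply quotes it as Lemma 4.2.2 of \cite{Kr08}, so your argument is a self-contained substitute for that citation rather than a variant of an argument given in the paper. Your route is the natural one: the parabolic scaling reduction to $Q_1$ is computed correctly (the factors $r^p$ and $r^{2p}$ come out exactly right), the Jensen step $\dashint|w-(w)_{Q_1}|^p\le\dashint\dashint|w(t,x)-w(s,y)|^p$ is valid for $p\ge1$, the spatial pieces are handled by the classical Poincar\'e inequality on $B_1$ at each fixed time, and the only genuinely delicate point --- that time increments of $Du$ cannot be estimated by the fundamental theorem of calculus, since that would require $D\partial_t u$ --- is correctly resolved by comparing $Du(t,\cdot)$ and $Du(s,\cdot)$ only through the averages $\psi^i(t)=c_0^{-1}\int D_iu(t,y)\zeta(y)\,dy$ and integrating by parts to put the spatial derivative on $\zeta$, which trades $D\partial_t u$ for $\partial_t u$. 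The second inequality then follows as you say, using $(x^i)_{Q_1}=0$ (the ball is centered at the origin), the first inequality applied to $v=u-x^i(D_iu)_{Q_1}$ for the spatial oscillation, and a direct time integration for $v(t,y)-v(s,y)$. All constants depend only on $d$ and $p$, as required; the only cosmetic quibble is the phrase that the naive estimate ``would make the lemma false'' --- it would merely fail to prove it.
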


The second lemma  is a  parabolic embedding theorem
proved as Lemma II.3.3 in \cite{LSU}.
\begin{lemma} \label{3.7.2}
Let $p> (d+2)/2$ and $u \in W^{1,2}_p(Q_{1} )$.  Then for any
$(t,x)\in  Q_{1} $,
$$
|u(t,x)|\leq N\|u\|_{W^{1,2}_p(Q_{1})},
$$
where $N=N(d, p)$.
\end{lemma}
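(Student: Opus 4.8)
The plan is to localize $u$, represent it as a parabolic heat potential, and estimate that potential by Hölder's inequality; this is precisely the mechanism behind Lemma II.3.3 of \cite{LSU}. First, using a bounded extension operator for the parabolic Sobolev space on the cylinder $Q_1$, extend $u$ to $\tilde u\in W^{1,2}_p(\bR^{d+1})$ with $\|\tilde u\|_{W^{1,2}_p(\bR^{d+1})}\le N\|u\|_{W^{1,2}_p(Q_1)}$, and multiply by a fixed cutoff $\zeta\in C^\infty_0(\bR^{d+1})$ equal to one on $\bar Q_1$. The function $w:=\zeta\tilde u$ then coincides with $u$ on $Q_1$, has compact support, and satisfies $\|w\|_{W^{1,2}_p(\bR^{d+1})}\le N\|u\|_{W^{1,2}_p(Q_1)}$; in particular $h:=\partial_t w-D_{ii}w\in L_p(\bR^{d+1})$ with $\|h\|_{L_p(\bR^{d+1})}\le N\|u\|_{W^{1,2}_p(Q_1)}$, where the support of $w$ is contained in a fixed compact set depending only on $d$.

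Since $w$ vanishes for $t$ sufficiently negative, uniqueness for the Cauchy problem for the heat equation yields
$$
w(t,x)=\int_{-\infty}^{t}\int_{\bR^{d}}\Gamma(t-s,x-y)\,h(s,y)\,dy\,ds,
$$
where $\Gamma(\tau,z)=(4\pi\tau)^{-d/2}\exp(-|z|^2/(4\tau))$, $\tau>0$, is the heat kernel. By Hölder's inequality, with $p'=p/(p-1)$ and $M$ a constant bounding the time-extent of $\operatorname{supp}w$,
$$
|w(t,x)|\le\|h\|_{L_p(\bR^{d+1})}\Big(\int_{0}^{M}\int_{\bR^{d}}\Gamma(\tau,z)^{p'}\,dz\,d\tau\Big)^{1/p'}.
$$
A direct evaluation of the Gaussian integral in $z$ gives $\int_{0}^{M}\int_{\bR^{d}}\Gamma(\tau,z)^{p'}\,dz\,d\tau=c(d,p)\int_{0}^{M}\tau^{-d(p'-1)/2}\,d\tau$, and the hypothesis $p>(d+2)/2$ is exactly what makes $d(p'-1)/2<1$, so this quantity is finite. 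Since $w=u$ on $Q_1$, we conclude $|u(t,x)|\le N(d,p)\|u\|_{W^{1,2}_p(Q_1)}$ for every $(t,x)\in Q_1$.

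The only genuinely technical ingredient is the bounded extension operator $W^{1,2}_p(Q_1)\to W^{1,2}_p(\bR^{d+1})$, which must respect the parabolic scaling so that $\partial_t$ counts with weight two; it is obtained in the standard way by flattening $\partial B_1$, reflecting in the spatial normal variable, and extending across the endpoints of the time interval. An alternative route that bypasses extension is a parabolic Campanato/Morrey iteration: combining the Poincaré-type inequality of Lemma \ref{3.7.1} with Hölder's inequality gives, for cylinders $Q_r\subset Q_1$, that $\dashint_{Q_r}|u-P_r|\le Nr^{2-(d+2)/p}\|u\|_{W^{1,2}_p(Q_1)}$, where $P_r$ is the polynomial built from $(u)_{Q_r}$ and $(Du)_{Q_r}$ and the exponent $2-(d+2)/p$ is positive; telescoping over dyadic radii shows that the averages of $u$ over shrinking cylinders converge, with limit equal to $u(t,x)$ at Lebesgue points and bounded by $N\|u\|_{W^{1,2}_p(Q_1)}$. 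In this second approach the main subtlety is handling points on the parabolic boundary of $Q_1$, where one must work with time-backward as well as time-forward cylinders (and a spatial corkscrew for the lateral part), so some amount of one-sided extension in $t$ reappears; for this reason I would present the heat-potential argument as the primary proof.
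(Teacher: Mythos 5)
Your proposal is correct. Note that the paper does not actually prove Lemma \ref{3.7.2}: it simply cites Lemma II.3.3 of \cite{LSU}, and your heat-potential argument is essentially the classical proof behind that citation (extension to $\bR^{d+1}$, cutoff, representation of $w$ as the volume heat potential of $h=\partial_t w-\Delta w$, and H\"older's inequality, with the exponent computation $d(p'-1)/2<1$ being exactly equivalent to $p>(d+2)/2$). So you have supplied a self-contained proof where the paper relies on a reference, but there is no methodological divergence. One small point worth tightening: since boundedness of $w$ is precisely what is being proved, justifying the Duhamel representation by ``uniqueness for the Cauchy problem'' requires specifying a uniqueness class containing $w$ (e.g.\ compactly supported $W^{1,2}_p$ solutions, or tempered distributions); the cleanest route is to establish the representation and the resulting bound first for mollifications of $w$, where it is an elementary Green's identity, and then pass to the limit almost everywhere, which yields the stated pointwise bound for the continuous representative of $u$ on $Q_1$. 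The extension operator you invoke for $W^{1,2}_p(Q_1)$ is indeed standard for a cylinder over a smooth ball, so that ingredient is unproblematic.
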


Let $v= u(t,x)- (u)_{ Q_{r} }- x^{i} (D_{i}u)_{  Q_{r} }$,
then $v$ belongs to $W^{1,2}_p (  Q_{r} )$
whenever $u$ does.
Noting that
$$
D_{ij}v= D_{ij}u, \quad \partial_{t}u =\partial_{t}v
, \quad D_{i}v=  D_{i}u -
( D_{i}u )_{  Q_{r}},
$$
we get the following corollary by dilations and combining Lemmas \ref{3.7.1} and
\ref{3.7.2}.
\begin{corollary}
                          \label{3.7.2b}
Let $p>(d+2)/2$ and $r\in(0,\infty)$. Then for any $u\in
W^{1,2}_p(Q_{r})$, we have
$$
\sup_{(t,x)\in  Q_{r}}| u(t,x)- (u)_{  Q_{r} }-
x^{i} (D_{i}u)_{  Q_{r} }|^{p}
$$
$$
\leq Nr^{2p} \dashint_{ Q_{r}}(|D^{2}u |+ |\partial_{t}u|)^p \,dx\,dt,
$$
where $N=N(d,p)$.
\end{corollary}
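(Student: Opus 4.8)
The plan is to reduce to the case $r=1$ by a parabolic dilation and then simply concatenate the two preceding lemmas applied to the function $v$ introduced just before the statement.

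First I would perform the change of variables $\tilde u(s,y):=u(r^{2}s,ry)$, which maps $Q_{1}$ onto $Q_{r}$ and satisfies $\partial_{s}\tilde u=r^{2}(\partial_{t}u)(r^{2}s,ry)$, $D_{y}\tilde u=r(Du)(r^{2}s,ry)$, and $D^{2}_{y}\tilde u=r^{2}(D^{2}u)(r^{2}s,ry)$. A direct computation with normalized averages gives $(u)_{Q_{r}}=(\tilde u)_{Q_{1}}$ and, since $(D_{i}u)_{Q_{r}}=r^{-1}(D_{y^{i}}\tilde u)_{Q_{1}}$ and $x=ry$, also $x^{i}(D_{i}u)_{Q_{r}}=y^{i}(D_{y^{i}}\tilde u)_{Q_{1}}$ when $(t,x)=(r^{2}s,ry)$. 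Hence the left-hand side of the asserted inequality is dilation-invariant, while on the right-hand side $(|D^{2}u|+|\partial_{t}u|)^{p}$ picks up a factor $r^{-2p}$ (which survives the normalized average $\dashint_{Q_{r}}=\dashint_{Q_{1}}$ after the Jacobian cancels), so the prefactor $r^{2p}$ exactly compensates. Therefore it suffices to prove the statement for $r=1$.

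For $r=1$ put $v=u-(u)_{Q_{1}}-x^{i}(D_{i}u)_{Q_{1}}$, so that $v\in W^{1,2}_{p}(Q_{1})$, $D^{2}v=D^{2}u$, $\partial_{t}v=\partial_{t}u$, and $D_{i}v=D_{i}u-(D_{i}u)_{Q_{1}}$. Since $p>(d+2)/2$, Lemma \ref{3.7.2} applied to $v$ gives $\sup_{Q_{1}}|v|\le N\|v\|_{W^{1,2}_{p}(Q_{1})}$, so it remains to bound the latter norm. The second-order and time-derivative terms satisfy $\|D^{2}v\|_{L_{p}(Q_{1})}=\|D^{2}u\|_{L_{p}(Q_{1})}$ and $\|\partial_{t}v\|_{L_{p}(Q_{1})}=\|\partial_{t}u\|_{L_{p}(Q_{1})}$, both trivially controlled by $\big(\int_{Q_{1}}(|D^{2}u|+|\partial_{t}u|)^{p}\,dx\,dt\big)^{1/p}$; the first-order term $\|Dv\|_{L_{p}(Q_{1})}=\|Du-(Du)_{Q_{1}}\|_{L_{p}(Q_{1})}$ is controlled by the first inequality of Lemma \ref{3.7.1} with $r=1$, and the zeroth-order term $\|v\|_{L_{p}(Q_{1})}$ by its second inequality. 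Summing these estimates yields $\|v\|_{W^{1,2}_{p}(Q_{1})}^{p}\le N\int_{Q_{1}}(|D^{2}u|+|\partial_{t}u|)^{p}\,dx\,dt=N\dashint_{Q_{1}}(|D^{2}u|+|\partial_{t}u|)^{p}\,dx\,dt$, since $|Q_{1}|$ depends only on $d$; combining with the embedding bound and undoing the dilation finishes the proof.

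The only point needing care is that Lemma \ref{3.7.1} is stated for $u\in C^{\infty}_{loc}(\bR^{d+1})$ while here $u$ is merely in $W^{1,2}_{p}(Q_{1})$; this is handled by a routine mollification, applying Lemma \ref{3.7.1} to $u^{(\varepsilon)}$ on a slightly smaller cylinder and passing to the limit, using continuity of averages and $L_{p}$-norms over $Q_{1}$ under $W^{1,2}_{p}$-convergence. I do not expect any genuine obstacle here: once the scaling bookkeeping is done, the corollary is simply Lemma \ref{3.7.1} fed into the Sobolev embedding of Lemma \ref{3.7.2}, and the hypothesis $p>(d+2)/2$ enters only through the latter.
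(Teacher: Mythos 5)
Your proof is correct and follows essentially the same route as the paper: define $v=u-(u)_{Q_r}-x^i(D_iu)_{Q_r}$, note that its second-order spatial and time derivatives coincide with those of $u$, and combine the Poincar\'e-type bounds of Lemma \ref{3.7.1} with the embedding of Lemma \ref{3.7.2} after reducing to $r=1$ by a parabolic dilation. The scaling bookkeeping and the density remark about extending Lemma \ref{3.7.1} from smooth functions to $W^{1,2}_p$ are exactly the details the paper leaves implicit.
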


\begin{lemma}
                        \label{3.7.3}
Let $r\in(0, \infty)$  and $\kappa\geq2$.
Let
$
v\in C^{1,2}_b ( Q_{\kappa r} )
$
be a solution of \eqref{para0} with $f\equiv0$.
Then there are constants $\alpha\in(0,1)$ and $N$ depending only
on $d$ and $\delta$ such that
$$
\dashint_{Q_r}\dashint_{Q_r} |D^2v(t, x)-D^2v(s, y)| \,dx\,dt\,dy\,ds
\leq N\kappa^{-2-\alpha} r^{-2}
\sup_{ \partial'Q_{\kappa r}} |v|.
$$
\end{lemma}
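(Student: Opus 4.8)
The plan is to follow the scheme of Lemma~\ref{3.6.3}: reduce to a unit cylinder by a parabolic dilation, apply the interior $C^{2+\alpha}$ (Evans--Krylov) estimate for \eqref{para0}, and then pass from the interior supremum of $v$ to its values on $\partial'Q_{\kappa r}$ by the maximum principle.

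First I would normalize. The parabolic dilation $v(t,x)\mapsto (\kappa r)^{-2}v((\kappa r)^{2}t,(\kappa r)x)$ sends a solution of \eqref{para0} with $f\equiv0$ on $Q_{\kappa r}$ to one on $Q_{1}$ and reduces the asserted inequality to its special case $\kappa r=1$. So it suffices to prove, for $v\in C^{1,2}_{b}(Q_{1})$ solving \eqref{para0} with $f\equiv0$ and for $r=1/\kappa\in(0,1/2]$ (note that then $\kappa^{-2-\alpha}r^{-2}=r^{\alpha}$),
\[
\dashint_{Q_{r}}\dashint_{Q_{r}}|D^{2}v(t,x)-D^{2}v(s,y)|\,dx\,dt\,dy\,ds\le Nr^{\alpha}\sup_{\partial'Q_{1}}|v|.
\]

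By $({\rm H}_{1})$ the operator $F$ is uniformly elliptic, and it is convex; hence the interior $C^{2+\alpha}$ estimate for constant-coefficient parabolic Bellman equations (the parabolic Evans--Krylov theorem; see \cite{Wa92} and the $C^{2+\alpha}$ theory referred to in the Introduction, recalling that concavity may be assumed in place of convexity) gives constants $\alpha=\alpha(d,\delta)\in(0,1)$ and $N=N(d,\delta)$ with
\[
[D^{2}v]_{C^{\alpha/2,\alpha}(Q_{3/4})}\le N\sup_{Q_{1}}|v|.
\]
Since $r\le 1/2$ we have $Q_{r}\subset Q_{3/4}$ and any two points of $Q_{r}$ lie within parabolic distance at most $3r$, so the integrand above is pointwise bounded by $Nr^{\alpha}\sup_{Q_{1}}|v|$, and averaging yields the displayed bound with $\sup_{Q_{1}}|v|$ in place of $\sup_{\partial'Q_{1}}|v|$. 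To close the remaining gap, I would use that $F(0)=0$ (positive homogeneity plus continuity), so that, as in the proof of Lemma~\ref{osc}, $({\rm H}_{1})$ provides $a=(a^{ij}(t,x))\in\cS_{\delta}$ (measurable) with $F(D^{2}v)=a^{ij}D_{ij}v$; thus $v$ solves the linear equation $\partial_{t}v+a^{ij}D_{ij}v=0$ in $Q_{1}$, whose parabolic boundary is $\partial'Q_{1}$ (it contains the top face $\{1\}\times B_{1}$), and the classical maximum principle gives $\sup_{Q_{1}}|v|\le\sup_{\partial'Q_{1}}|v|$. This completes the argument.

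The only substantial ingredient is the interior $C^{2+\alpha}$ estimate; the dilation, the linearization that produces the maximum principle, and the elementary averaging are all routine. A minor point to verify is that $Q_{r}$ stays at positive parabolic distance from the parabolic boundary of the larger cylinder on which Evans--Krylov is applied (this is why $Q_{3/4}$ rather than $Q_{1/2}$ is convenient when $\kappa=2$), but this presents no genuine difficulty.
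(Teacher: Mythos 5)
Your proof is correct and follows essentially the same route as the paper: a parabolic dilation to reduce to $\kappa r=1$, the interior parabolic Evans--Krylov $C^{2+\alpha}$ estimate to bound the oscillation of $D^{2}v$ on $Q_{r}$ by $Nr^{\alpha}\sup_{Q_{1}}|v|$, and the maximum principle to replace $\sup_{Q_{1}}|v|$ by $\sup_{\partial'Q_{1}}|v|$. The only difference is that the paper (Remark \ref{remark 7.14.1}) spells out the approximation argument needed to apply Theorem 5.5.2 of \cite{Kr87} to a solution that is merely in $C^{1,2}_{b}$, a point you handle by directly citing the interior estimate for such solutions, which is legitimate.
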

\begin{proof}
Dilations show that we may concentrate on the case when $r=1/\kappa$.
In this case one routinely derives from Theorem 5.5.2 in  \cite{Kr87}  that
there exist $\alpha, N$
depending only on $\delta$, $d$  such that  for any $(t,x), (s,y)\in Q_{1/2}$,
we have
\begin{equation}
                                                        \label{7.14.5}
|D^2 v(t, x)- D^2 v(s, y)| \leq N(|x-y|^{\alpha}+|t-s|^{\alpha/2})
\sup_{Q_{1}} |v|
\end{equation}
 Thanks to the maximum principle, the lemma is proved.
\end{proof}

\begin{remark}
                                            \label{remark 7.14.1}
By ``routinely derives" we mean the following. First
observe that we may assume that
$$
\sup_{Q_{1}}|v|=1.
$$
Indeed, if the sup is zero,   we have nothing to prove.
However if the sup is different
from zero we can replace $v$ with the ratio  of $v$ and the sup.

Then
we approximate  $F(u'')$
by smooth convex functions $F^{n}(u'')$ so that
 $F^{n}\to F$ as $n\to\infty$
uniformly on compact sets
and for all values of variables
$$
\delta|\xi|^{2}\leq F^{n}_{u_{ij}}\xi^{i}\xi^{j}\leq\delta^{-1}|\xi|^{2},
\quad
|F^{n}-F^{n}_{u_{ij}}u_{ij}|\leq 1.
$$
To do that it suffices to mollify $F(u'')$ with respect to
$u''$.
Then we approximate $ v$ on $\partial'Q_{1}$
uniformly by infinitely differentiable functions $\phi^{n}$
such that $|\phi^{n}|\leq1$. Next,
we apply Theorem 6.2.5  of  \cite{Kr87}  to find a unique $u^{n}\in
C^{1,2}(Q_{1})\cap C(\bar Q_{1})$
such that
$$
\partial_{t}u^{n}+F^{n}(D^{2} u^{n})-\tfrac{1}{n}u^{n}=0
\quad\text{in}\quad
Q_{1}
$$
and $u^{n}=\phi^{n}$ on $\partial'Q_{1}$.

This theorem also guarantees that
$$
u^{n},Du^{n},D^{2}u^{n},\partial_{t}u^{n}
\in C^{1,2} ([ \varepsilon, 1-\varepsilon ]\times \bar{B}_{\varepsilon})
$$
for any $\varepsilon\in(0,1/2)$.
By the maximum principle $u^{n}$ are uniformly bounded in
$Q_{1}$. Since
$$
\partial_{t}v+F^{n}( D^{2}v)-\tfrac{1}{n}v
= F^{n}(D^{2}v)-F(D^{2}v)-\tfrac{1}{n}v
$$
and the latter tends to zero uniformly in $Q_{1}$, by the maximum principle
$u^{n}\to v$ uniformly in $Q_{1}$.

Now we can formally apply Theorem
 5.5.2 of  \cite{Kr87}  and get that the
$C^{1+\alpha/2,2+\alpha}(Q_{1/2})$
norms of $u^{n}$ are uniformly bounded and, in particular,
for any $(t,x), (s,y)\in Q_{1/2}$,
we have
$$
|D^2 u^{n}(t, x)- D^2 u^{n}(s, y)| \leq N(|x-y|^{\alpha}+|t-s|^{\alpha/2}),
$$
where $N$ depends only on $d$ and $\delta$. Since $u^{n}\to v$
uniformly and $D^{2}u^{n}$ are uniformly  equicontinuous in
$Q_{1/2}$, we have that $D^{2}u^{n}\to  D^{ 2}v$
in $Q_{1/2}$, which
yields
$$
|D^2 v(t, x)- D^2 v(s, y)| \leq N(|x-y|^{\alpha}+|t-s|^{\alpha/2})
$$
and this coincides with \eqref{7.14.5}.

\end{remark}

Introduce $\bL_{\delta}$ as before Lemma \ref{3.6.5} but allow the dependence
of the coefficients
on $(t,x)$ rather than on $x$ only.

\begin{lemma}
                       \label{plin}
Let $r\in(0,\infty)$ and let
$u\in   C(\bar Q_{r})\cap  W^{1,2}_{d+1}(Q_{\rho})$
for any $\rho\in (0,r)$. Then there are constants $\gamma\in(0, 1]$ and $N$,
depending only on
$\delta, d$, such that
for any $L\in  \bL_\delta $ we have
$$
\dashint_{Q_{r}} |D^2u|^\gamma \, dx \,  dt
 \leq  Nr^{-2\gamma}\sup_{\partial'Q_{r}}
 |u|^\gamma
$$
\begin{equation}
                                                                  \label{8.11.1}
+ N
\left(\dashint_{Q_{r}}|\partial_{t}u +Lu|^{d+1} \,
dx\, dt\right)^{\gamma/{(d+1)}}.
\end{equation}
\end{lemma}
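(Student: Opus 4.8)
The plan is to deduce Lemma~\ref{plin} from the parabolic $W^{1,2}_{\gamma}$ estimate for linear equations with merely measurable coefficients established in \cite{Kr10} (the parabolic counterpart of F.H.~Lin's estimate \cite{Li86}), in exactly the way the elliptic Lemma~\ref{lemma 8.11.1} is obtained from \cite{Li86}. All the analytic substance --- the existence of $\gamma\in(0,1]$ and $N$ depending only on $d,\delta$, and the bound itself --- is borrowed from there; what needs to be supplied is (a) a rescaling to a fixed cylinder and (b) a relaxation of the hypothesis from ``$u\in W^{1,2}_{d+1}(Q_r)$ up to the parabolic boundary'' to ``$u\in C(\bar Q_r)\cap W^{1,2}_{d+1}(Q_\rho)$ for every $\rho<r$'', which is handled by a routine exhaustion argument.

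First I would rescale to $r=1$. Setting $v(t,x)=u(r^{2}t,rx)$ on $Q_{1}$, one has $D^{2}v(t,x)=r^{2}(D^{2}u)(r^{2}t,rx)$ and $\partial_{t}v(t,x)=r^{2}(\partial_{t}u)(r^{2}t,rx)$, while $\tilde L v:=\tilde a^{ij}D_{ij}v$ with $\tilde a^{ij}(t,x):=a^{ij}(r^{2}t,rx)$ again lies in $\bL_{\delta}$ and satisfies $(\partial_{t}v+\tilde L v)(t,x)=r^{2}(\partial_{t}u+Lu)(r^{2}t,rx)$. Changing variables in every integral, inequality \eqref{8.11.1} for $u$ on $Q_{r}$ becomes, after multiplying through by $r^{-2\gamma}$, the same inequality for $v$ on $Q_{1}$; and $v\in C(\bar Q_{1})\cap W^{1,2}_{d+1}(Q_{\rho})$ for all $\rho<1$. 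So it suffices to prove the case $r=1$.

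Fix $\rho\in(0,1)$. The restriction of $v$ to $Q_{\rho}$ lies in $W^{1,2}_{d+1}(Q_{\rho})\cap C(\bar Q_{\rho})$, so the estimate of \cite{Kr10}, applied on $Q_{\rho}$ with the operator $\tilde L$, gives
$$
\dashint_{Q_{\rho}}|D^{2}v|^{\gamma}\,dx\,dt\le N\rho^{-2\gamma}\sup_{\partial'Q_{\rho}}|v|^{\gamma}+N\Big(\dashint_{Q_{\rho}}|\partial_{t}v+\tilde L v|^{d+1}\,dx\,dt\Big)^{\gamma/(d+1)}.
$$
Now let $\rho\uparrow1$. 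If $\partial_{t}v+\tilde L v\notin L_{d+1}(Q_{1})$, the right-hand side of \eqref{8.11.1} is infinite and there is nothing to prove; otherwise $\dashint_{Q_{\rho}}|\partial_{t}v+\tilde L v|^{d+1}\to\dashint_{Q_{1}}|\partial_{t}v+\tilde L v|^{d+1}$ because $Q_{\rho}\uparrow Q_{1}$ and $|Q_{\rho}|\to|Q_{1}|$. Also $\rho^{-2\gamma}\to1$; and, since $v$ is uniformly continuous on the compact set $\bar Q_{1}$ while every point of $\partial'Q_{\rho}$ lies within distance $1-\rho^{2}$ of $\overline{\partial'Q_{1}}$, one gets $\limsup_{\rho\uparrow1}\sup_{\partial'Q_{\rho}}|v|\le\sup_{\overline{\partial'Q_{1}}}|v|=\sup_{\partial'Q_{1}}|v|$, the last equality holding because $|v|$ is continuous and $\partial'Q_{1}$ is dense in its closure. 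On the left-hand side, $\dashint_{Q_{\rho}}|D^{2}v|^{\gamma}\to\dashint_{Q_{1}}|D^{2}v|^{\gamma}$ by monotone convergence. Taking $\limsup$ of both sides of the displayed inequality as $\rho\uparrow1$ yields \eqref{8.11.1} for $v$ on $Q_{1}$, hence the lemma.

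I do not expect a genuine obstacle: the only care needed is in passing the two averaging integrals and the boundary supremum to the limit. If the parabolic estimate in \cite{Kr10} is stated only for functions smooth up to the boundary, one inserts before the limit an elementary mollification step on each $Q_{\rho}$, using the parabolic embedding (Lemma~\ref{3.7.2}, noting $d+1>(d+2)/2$) to upgrade $W^{1,2}_{d+1}$-convergence to uniform convergence and thereby control the boundary supremum of the approximants.
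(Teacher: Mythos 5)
Your two preliminary reductions are correct and coincide with the opening lines of the paper's proof: the exhaustion over $\rho\uparrow r$ (the paper: ``if we prove \eqref{8.11.1} with $\rho$ in place of $r$ \dots by passing to the limit''), which lets one assume $u\in W^{1,2}_{d+1}(Q_r)\cap C(\bar Q_r)$, and the dilation to $r=1$. The gap is in the step you present as the whole point: you invoke ``the estimate of \cite{Kr10}'' on $Q_\rho$ in exactly the form of \eqref{8.11.1} --- for an arbitrary $W^{1,2}_{d+1}\cap C$ function on a parabolic cylinder, for an operator with merely measurable coefficients, with the supremum taken over the parabolic boundary of that same cylinder. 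But that is precisely the statement of Lemma \ref{plin} itself (after scaling); it is not what the cited Corollary 4.2 of \cite{Kr10} provides in ready-to-use form. If it were, the lemma would be a one-line restatement, whereas the authors explicitly present it as needing proof (and note that only a slightly weaker statement appears in \cite{Wa90}). So your argument, as written, essentially assumes the conclusion and reduces the lemma to scaling plus a limiting argument.

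The missing substance is the bridge the paper builds between what \cite{Kr10} supplies and \eqref{8.11.1}. After the same two reductions, the paper sets $f=\partial_tu+Lu$ on $Q_1$, extends $f$ by zero for $t\le 0$ and $u$ by the even reflection $u(t,x)=u(-t,x)$ to the time-enlarged cylinder $(-1,1)\times B_1$ (the extended $u$ is still of class $W^{1,2}_{d+1}$ there), mollifies the coefficients of $L$ so that the terminal--lateral value problem $\partial_tv+Lv=f$ in $(-1,1)\times B_1$ with data $u$ on its parabolic boundary is classically solvable (Theorem IV.9.1 of \cite{LSU} or Theorem 7.17 of \cite{GML}), notes that by uniqueness $v=u$ in $Q_1$, and only then applies Corollary 4.2 of \cite{Kr10} to the constructed solution $v$ on the enlarged cylinder; because $f$ vanishes below $t=0$ and $v=u$ on the terminal and lateral pieces, the right-hand side collapses to $\|f\|_{L_{d+1}(Q_1)}$ and $\sup_{\partial'Q_1}|u|$, and the mollification of the coefficients is removed afterwards since $L_nu\to Lu$ in $L_{d+1}$ for fixed $u$. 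The very structure of this argument (smoothing the coefficients solely to solve an auxiliary problem, and invoking uniqueness to identify $v$ with $u$) shows that a direct application of the cited estimate to $u$ on $Q_\rho$, as in your proposal, is not available. Your contingency plan at the end (mollifying $u$ to gain smoothness up to the boundary) does not address this: the obstruction is not the regularity of $u$ but the form in which the parabolic Lin-type estimate exists in \cite{Kr10}, and the reflection--extension--uniqueness construction is exactly what converts it into \eqref{8.11.1}.
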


\begin{proof} If we prove \eqref{8.11.1}
with $\rho$ in place of $r$ for any $\rho\in(0,r)$,
then by passing to the limit we will obtain \eqref{8.11.1} as is.
Hence, we may assume that $u\in W^{1,2}_{d+1}(Q_{r})$.
 As usual, we may also assume that $r=1$. Then
we may also assume that the coefficients $a^{ij}(t,x)$ of $L$
are infinitely differentiable in $\bR^{d+1}$. Now
set $f=\partial_{t}u+Lu$ in $Q_{1}$ and extend $f(t,x)$
for $t\leq 0$ as zero. Also set $u(t,x)=u(-t,x)$
for $t\leq0$. Observe that the new $u$ belongs to
$W^{1,2}_{d+1 }((-1,1)\times B_{1})$.
After that
define $v(t,x)$ as a unique
 $W^{1,2}_{d+1 }((-1,1)
\times B_{1})\cap C([-1,1]\times \bar B_1)$
solution of
 $\partial_{t}v+Lv=f$ with  terminal
and lateral conditions being $u$. The existence and uniqueness
of such a solution is a classical result
(see, for instance,
Theorem IV.9.1 of \cite{LSU} or
 Theorem 7.17 of \cite{GML}).
By uniqueness $v=u$ in $Q_{1}$, so that
owing to Corollary 4.2 of \cite{Kr10},
$$
\int_{ Q_1 } |D^2u|^\gamma \, dx\, dt
=\int_{ Q_1 } |D^2 v|^\gamma \, dx\, dt\leq N
\left(\int_{(-1,1)\times B_{1}}|f|^{d+1}\,dx\,dt
\right)^{\gamma/(d+1)}
$$
$$
+N\sup_{\partial'(-1,1)\times B_{1}} |v|^\gamma
=N
\left(\int_{Q_{1}}|f|^{d+1}\,dx\,dt
\right)^{\gamma/(d+1)}
+N\sup_{\partial'Q_{1}} |u|^\gamma .
$$
The lemma is proved.
\end{proof}

 We note that a slightly weaker statement than Lemma \ref{plin}
can be found in \cite{Wa90}, where
for the proof the reader  is referred to \cite{Wa92}.

Everywhere below in this section
$\alpha$ is the constant from Lemma \ref{3.7.3}
and $\gamma$ is the one from Lemma \ref{plin}.
\begin{lemma}
                        \label{3.8.1}
Let $ r\in(0,\infty)$ and  $\kappa \geq 2$. Let  $u\in
W^{1,2}_{d+1}(Q_{\kappa r})$ be a solution to  \eqref{para0}. Then
$$
\dashint_{Q_r}\dashint_{Q_r} |D^2u(t,x)-D^2u(s,y)|^{\gamma}\,
 dx\, dt\, dy\,ds
$$
$$
\leq N\kappa^{d+2}
( |f|^{d+1} )^{\gamma/(d+1)}_{Q_{\kappa r}}
+N\kappa^{-\alpha \gamma} ( |D^2u|^{d+1})^{\gamma/(d+1)}_{Q_{\kappa r}},
$$
where $N$ depends only on $d$ and $\delta$.
\end{lemma}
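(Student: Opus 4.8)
The plan is to run the interior argument behind Lemma~\ref{osc} (its Case~1): decompose $u$ as a solution of the homogeneous constant-coefficient equation plus an error controlled by a linear equation with measurable coefficients, estimate the first piece by the interior $C^{\alpha}$ bound of Lemma~\ref{3.7.3}, and the second by the parabolic $W^{1,2}_{\gamma}$ bound of Lemma~\ref{plin}. Under the parabolic rescaling $u(t,x)\mapsto r^{-2}u(r^{2}t,rx)$, equation \eqref{para0} is preserved, $D^{2}u$ is merely composed with the change of variables, and all three averages in the asserted inequality are unchanged; so it suffices to treat $r=1$, i.e. $u\in W^{1,2}_{d+1}(Q_{\kappa})$ solves $\partial_{t}u+F(D^{2}u)=f$ in $Q_{\kappa}$. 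By density and by mollifying $F$ in $u''$ as in Remark~\ref{remark 7.14.1}, I would further reduce to smooth $u$ (so $f$ is continuous), recovering the general case by passing to the limit, since every term in the inequality is continuous under $W^{1,2}_{d+1}(Q_{\kappa})$-convergence of $u$ and the attendant $L_{d+1}$-convergence of $f$. Put
$$
\hat u(t,x)=u(t,x)-(u)_{Q_{\kappa}}-x^{i}(D_{i}u)_{Q_{\kappa}},
$$
so that $D^{2}\hat u=D^{2}u$, $\partial_{t}\hat u=\partial_{t}u$, and $\partial_{t}\hat u+F(D^{2}\hat u)=f$ in $Q_{\kappa}$. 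Let $v$ be the classical solution of $\partial_{t}v+F(D^{2}v)=0$ in $Q_{\kappa}$ with $v=\hat u$ on $\partial'Q_{\kappa}$ (its existence, smoothness in the interior and continuity up to $\bar Q_{\kappa}$ being the Evans--Krylov-type facts invoked in Remark~\ref{remark 7.14.1}), and set $w=\hat u-v$. Then $w=0$ on $\partial'Q_{\kappa}$, and since $({\rm H}_{1})$ lets us write $F(D^{2}\hat u)-F(D^{2}v)=a^{ij}(D_{ij}\hat u-D_{ij}v)$ with an $\cS_{\delta}$-valued $a=(a^{ij})$, the function $w$ solves $\partial_{t}w+Lw=f$ in $Q_{\kappa}$ for some $L\in\bL_{\delta}$.

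For the $v$-part, Lemma~\ref{3.7.3} (with its radius taken to be $1$ and its constant $\kappa$ equal to ours, which is $\ge 2$), H\"older's inequality, and $\gamma\le 1$ give
$$
\dashint_{Q_{1}}\dashint_{Q_{1}}|D^{2}v(t,x)-D^{2}v(s,y)|^{\gamma}\,dx\,dt\,dy\,ds\le N\kappa^{-\gamma(2+\alpha)}\sup_{\partial'Q_{\kappa}}|\hat u|^{\gamma}.
$$
To bound $\sup_{\partial'Q_{\kappa}}|\hat u|$ I would use Corollary~\ref{3.7.2b} with radius $\kappa$ (applicable since $d+1>(d+2)/2$) together with the pointwise estimate $|\partial_{t}u|\le |f|+N|D^{2}u|$, which follows from $\partial_{t}u=f-F(D^{2}u)$ and the Lipschitz bound $|F(u'')|\le N|u''|$ implied by $({\rm H}_{1})$; this yields
$$
\sup_{\partial'Q_{\kappa}}|\hat u|\le N\kappa^{2}\Big[\big(\dashint_{Q_{\kappa}}|D^{2}u|^{d+1}\big)^{1/(d+1)}+\big(\dashint_{Q_{\kappa}}|f|^{d+1}\big)^{1/(d+1)}\Big],
$$
so that, using subadditivity of $t\mapsto t^{\gamma}$, the $v$-contribution is at most $N\kappa^{-\gamma\alpha}\big[(|D^{2}u|^{d+1})^{\gamma/(d+1)}_{Q_{\kappa}}+(|f|^{d+1})^{\gamma/(d+1)}_{Q_{\kappa}}\big]$.

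For the $w$-part, Lemma~\ref{plin} applied to $w$ on $Q_{\kappa}$ --- the boundary term vanishing because $w=0$ on $\partial'Q_{\kappa}$ --- gives $\dashint_{Q_{\kappa}}|D^{2}w|^{\gamma}\le N\big(\dashint_{Q_{\kappa}}|f|^{d+1}\big)^{\gamma/(d+1)}$, whence, enlarging the domain of integration, using $|Q_{\kappa}|/|Q_{1}|=\kappa^{d+2}$, and again the subadditivity of $t\mapsto t^{\gamma}$,
$$
\dashint_{Q_{1}}\dashint_{Q_{1}}|D^{2}w(t,x)-D^{2}w(s,y)|^{\gamma}\,dx\,dt\,dy\,ds\le N\dashint_{Q_{1}}|D^{2}w|^{\gamma}\le N\kappa^{d+2}\big(\dashint_{Q_{\kappa}}|f|^{d+1}\big)^{\gamma/(d+1)}.
$$
Combining the two pieces via $D^{2}u=D^{2}v+D^{2}w$ (once more with $|a-b|^{\gamma}\le|a|^{\gamma}+|b|^{\gamma}$), and absorbing the harmless factor $\kappa^{-\gamma\alpha}\le N\kappa^{d+2}$ in front of the $f$-term, yields the claim with $N=N(d,\delta)$. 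The only point requiring care is the reduction to a genuinely classical $v$ with (smoothed) boundary data $\hat u$ and the ensuing limiting procedure; this is handled exactly as in Remark~\ref{remark 7.14.1} and the proof of Theorem~\ref{3.22.1}, using the $C^{2+\alpha}$ theory for uniformly parabolic Bellman equations and the existence of barriers at all parabolic boundary points of the cylinder $Q_{\kappa}$. Everything else is routine rescaling and H\"older bookkeeping.
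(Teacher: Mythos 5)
Your proposal is correct and follows essentially the same route as the paper's proof: rescaling to $r=1$, reduction to smooth $u$, the decomposition $\hat u=v+w$ with $v$ solving the homogeneous equation with boundary data $\hat u$ and $w$ solving $\partial_t w+Lw=f$ for some $L\in\bL_{\delta}$, then Lemma \ref{3.7.3} with Corollary \ref{3.7.2b} and the bound $|\partial_t u|\le|f|+N|D^2u|$ for the $v$-part and Lemma \ref{plin} for the $w$-part. The bookkeeping with $\kappa^{-\gamma\alpha}$, $\kappa^{d+2}$, and the subadditivity of $t\mapsto t^{\gamma}$ matches the paper's argument as well.
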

\begin{proof} As usual,
it suffices to prove the lemma for $r=1$.
We  follow  the proof  of Lemma 2.4 in \cite{Kr10}
and, as there, without trouble reduce the general case to the one that
$u\in C^{\infty}_{b}(\bar{Q}_{\kappa  })$.
Define $\hat{u}:= u-(u)_{Q_{\kappa  }}- x^i(D_{i}u)_{Q_{\kappa }}$ and
let $v\in C^{1,2}_b(Q_{\kappa  })\cap C (\bar Q_{\kappa  })$
be a solution of \eqref{para0} in $Q_{\kappa}$ with
$f\equiv0$
and
$v=\hat{u} $ on
$\partial' Q_{\kappa  }$. Such a solution $v$ exists by Theorem 6.4.1
of \cite{Kr87}.
By Lemma \ref{3.7.3}, H\"older's inequality, and Corollary
\ref{3.7.2b}, we have
$$
\dashint_{Q_1}\dashint_{Q_1}|D^2v(t,x)-D^2v(s,y)|^\gamma \,dx\, dt\,  dy\, ds
$$
\begin{equation}
                       \label{pv}
\leq N\kappa^{-\gamma(2+\alpha)}\sup_{\partial'Q_{\kappa}}
|v|^{\gamma}
\leq N \kappa^{-\alpha\gamma}
( |D^2u|^{d+1}+|\partial_{t}u| ^{d+1})^{\gamma/(d+1)}_{Q_{\kappa}}.
\end{equation}
Let $w:=\hat{u}-v$ in $\bar Q_{\kappa}$. Then by the  same argument as
in the proof of Lemma 2.4 in \cite{Kr10} or our Lemma \ref{osc},
  we obtain that there exists
an operator $L\in  \bL_\delta $,
such that $\partial_{t}w+Lw=f$.
Then by Lemma \ref{plin},
$$
\dashint_{Q_{1}} |D^2w|^\gamma \,dx\, dt\leq N\kappa^{d+2}
\dashint_{Q_{\kappa}} |D^2w|^{\gamma}\, dx \, dt
$$
$$
\leq N\kappa^{d+2} \left(\dashint_{Q_{\kappa}}
|f|^{d+1}\,dx\, dt\right)^{\gamma/ (d+1) }
$$
and
$$
\dashint_{Q_1}\dashint_{Q_1}|D^2w(t,x) -D^2 w(s,y)|^\gamma
\leq N\kappa^{d+2}\left(\dashint_{Q_{\kappa}}
|f|^{d+1}\,dx\, dt\right)^{\gamma/ (d+1) }.
$$
By combining this inequality and \eqref{pv} and observing that $D^{2}u=
D^{2}v+D^{2}w$  and
\begin{equation}
                            \label{eq16.29}
|\partial_t u|=| f-F(D^2u)| \leq |f|+ N |D^2u|,
\end{equation}
 we get the desired result. The lemma is proved.
\end{proof}

 The next theorem is the main result of this section.
For simplicity of notation set
\begin{equation}
                                                         \label{7.10.6}
L_{p}=L_p(\bR^{d+1}),\quad W^{1,2}_{p}=W^{1,2}_p(\bR^{d+1}).
\end{equation}

\begin{theorem}
                            \label{thm4.7}
Let $p>{d+1}$.
(i)
Let $u\in W^{1,2}_{p} $
be a solution to  \eqref{para0}. Then
\begin{equation}
                     \label{3.9.2}
\|D^2u\|_{L_p }
+\|\partial_{t}u\|_{L_p }\leq N \|f\|_{L_p },
\end{equation}
where $N$ depends only on $p$, $d$, and $\delta$.

(ii) For any $\lambda>0$  and
$f\in L_{p} $, there exists a unique solution
$u\in W^{1,2}_{p} $ of the equation
\begin{equation}
                        \label{eq12.34}
\partial_t u + F(D^2u)-\lambda u= f.
\end{equation}
Furthermore,
\begin{equation}
                                                       \label{7.6.1}
\lambda\|u\|_{L_p } +\|D^2u\|_{L_p }
+\|\partial_{t}u\|_{L_p }\leq N \|f\|_{L_p },
\end{equation}
 where $N$ depends only on $p$, $d$, $\delta$, and $\lambda$.
\end{theorem}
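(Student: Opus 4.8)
The plan is to mimic the proof of Theorem \ref{3.22.1}, replacing its elliptic ingredients by the parabolic lemmas established above; since we now work on the whole space $\bR^{d+1}$, no odd‑reflection trick is needed. Fix $u\in W^{1,2}_p$ solving \eqref{para0}; since $p>d+1$, we have $u\in W^{1,2}_{d+1}(Q_\rho(\tau,z))$ for every cylinder, so Lemma \ref{3.8.1} applies. On $\bR^{d+1}$ with Lebesgue measure, introduce the filtration $\frC$ of parabolic dyadic cubes (time‑edge $4^{-n}$, space‑edges $2^{-n}$), and, as before Corollary \ref{corollary 7.13.1}, the sharp function $g^\#_\gamma$ relative to $\frC$ together with the maximal function $\bM g=\sup_{Q_r\ni(t,x)}\dashint_{Q_r}|g|$ over parabolic cylinders. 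Each $C\in\frC$ containing $(t,x)$ lies in some cylinder $Q_r(\tau,z)\ni(t,x)$ of comparable measure (one chooses $\tau$ slightly below the bottom time of $C$), so averaging the conclusion of Lemma \ref{3.8.1} over such cubes gives, for every $\kappa\ge 2$,
$$ (D^2u)^\#_\gamma(t,x)\le N\kappa^{(d+2)/\gamma}\,\bM^{1/(d+1)}(|f|^{d+1})(t,x)+N\kappa^{-\alpha}\,\bM^{1/(d+1)}(|D^2u|^{d+1})(t,x), $$
the parabolic analogue of Corollary \ref{corollary 7.13.1}, with $N=N(d,\delta)$.

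Next I would apply the parabolic Fefferman--Stein theorem on $\bR^{d+1}$ (the counterpart of \eqref{fs}; cf. \cite{Kr08,Kr10}) and then the Hardy--Littlewood maximal function theorem, legitimate since $p/(d+1)>1$, to obtain
$$ \|D^2u\|_{L_p}\le N\kappa^{(d+2)/\gamma}\|f\|_{L_p}+N\kappa^{-\alpha}\|D^2u\|_{L_p}. $$
Choosing $\kappa$ so large that $N\kappa^{-\alpha}\le 1/2$ and absorbing the last term gives $\|D^2u\|_{L_p}\le N\|f\|_{L_p}$, and then $\|\partial_t u\|_{L_p}\le\|f\|_{L_p}+N\|D^2u\|_{L_p}\le N\|f\|_{L_p}$ by \eqref{eq16.29}, which is \eqref{3.9.2}.

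For the a priori bound \eqref{7.6.1} in part (ii), apply part (i) to \eqref{eq12.34} with right‑hand side $f+\lambda u$, obtaining $\|D^2u\|_{L_p}+\|\partial_t u\|_{L_p}\le N\|f\|_{L_p}+N\lambda\|u\|_{L_p}$; it remains to dominate $\lambda\|u\|_{L_p}$. By the elementary consequence of $({\rm H}_1)$ that $F(D^2u(t,x))=a^{ij}(t,x)D_{ij}u(t,x)$ for a measurable selection $a\in\cS_\delta$, the function $u$ solves the linear parabolic equation $\partial_t u+Lu-\lambda u=f$ with $L\in\bL_\delta$, so the standard linear parabolic $L_p$ estimate (as in the proof of Theorem \ref{3.22.1}; cf. \cite{Kr87}) yields $\lambda\|u\|_{L_p}\le N\|f\|_{L_p}$, and \eqref{7.6.1} follows.

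For existence and uniqueness I would repeat, in the parabolic setting, the scheme of the proof of Theorem \ref{3.22.1}(iii): for $f\in C^\infty_0(\bR^{d+1})$ and with $F$ replaced by the smooth convex approximations $F^n$ of Remark \ref{remark 7.14.1}, classical $C^{1+\alpha/2,2+\alpha}$ solvability of \eqref{eq12.34} (e.g.\ Theorem 6.4.1 of \cite{Kr87}) together with elementary barriers produces a solution decaying at infinity, which a cutoff argument based on $\partial_t(\zeta u)+F(D^2(\zeta u))-\lambda\zeta u=\zeta f+g$ with $|g|\le N(|Du|+|u|)$ promotes, via \eqref{7.6.1}, to an element of $W^{1,2}_p$; then one approximates a general $f\in L_p$ by $f_n\in C^\infty_0$, gets solutions $u_n$ with uniformly bounded $W^{1,2}_p$‑norms, extracts a subsequence converging locally uniformly on $\bR^{d+1}$, and passes to the limit using the stability theorems for fully nonlinear parabolic equations (parabolic analogues of Theorems 3.5.15 and 3.5.6 of \cite{Kr87}); uniqueness is immediate since the difference $w$ of two solutions satisfies $\partial_t w+Lw-\lambda w=0$ with $L\in\bL_\delta$. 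I expect the only genuinely delicate point to be this last solvability step — matching the correct classical solvability and stability references and verifying the limit solves the equation — whereas part (i) is essentially mechanical once Lemma \ref{3.8.1} is in hand.
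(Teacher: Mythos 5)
Your proposal is correct and follows essentially the same route as the paper: part (i) via Lemma \ref{3.8.1}, the filtration of parabolic dyadic cubes, the Fefferman--Stein-type Theorem 5.3 of \cite{Kr10} and the Hardy--Littlewood theorem, with $\partial_t u$ handled by \eqref{eq16.29}; part (ii) by moving $\lambda u$ to the right-hand side, bounding $\lambda\|u\|_{L_p}$ through a measurable selection $L\in\bL_\delta$ exactly as in the elliptic Theorem \ref{3.22.1}, and repeating that theorem's approximation/compactness scheme for solvability and its linearization argument for uniqueness. The paper gives only this outline, and your fleshed-out version fills it in consistently.
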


\begin{proof}
(i)  The estimate of the $D^2u$ term on the left-hand side of
\eqref{3.9.2} is derived from Theorem 5.3 of  \cite{Kr10} and Lemma
\ref{3.8.1} in the same way as Theorem 2.5 (i) of
 \cite{Kr10} or Theorem \ref{3.22.1} (i).
Of course this time we use the filtration
of parabolic dyadic cubes.
The estimate of $\partial_t u$ follows from that of $D^2 u$ and \eqref{eq16.29}.

(ii)   To prove the a priori
estimate \eqref{7.6.1} we replace $f$ with $-\lambda {u}+f$ in the above
estimates  and get
$$
\|\partial_{t}u\|_{L_p }+\|D^2u\|_{L_p }
 \leq \lambda \|u\|_{L_p }+\|f\|_{L_p }.
$$
Hence it suffices  to prove that
$$
\lambda\|u\|_{L_p }\leq N\|f\|_{L_p },
$$
which is done in the same way as in the elliptic case.
After that, the solvability of \eqref{eq12.34}
is proved in the same way as in Theorem \ref{3.22.1}.
The theorem is proved.
\end{proof}

\mysection{Parabolic equations in $\bR^{d+1}$
with VMO coefficients}
                                \label{sec5}

In this section, we consider the parabolic equation
\begin{equation}
                            \label{vpara}
\partial_t u(t,x)+F(D^2u(t,x),t,x) -\lambda u(t,x)=f(t,x).
\end{equation}

Everywhere below in this section, Assumption \ref{assump1} is supposed to
hold with $\cD=\bR^{d+1}$, $\alpha$ is the constant from Lemma
\ref{3.7.3}, and $\gamma$ is the one from Lemma \ref{plin}.
We use notation \eqref{7.10.6} and recall that $\theta(\mu,d ,\delta
)$ is  introduced in Remark~\ref{remark 7.29.1}.

\begin{lemma}
                    \label{pposc}
Let  $\beta\in(1,\infty)$, $ \lambda=0$, $\mu,r\in(0,
\infty)$,
$\kappa\geq 2$, and $ (t_0,x_0)\in
\bR^{d+1}$.  Suppose that $\theta=\theta(\mu,d ,\delta )$. Let $u\in
W^{1,2}_{d+1} $ be a solution
of \eqref{vpara} vanishing outside
$Q_{R_0}(t_0,x_0)$. Then,
$$
\dashint_{Q_r}\dashint_{Q_r} |D^2u(t,x)-D^2u(s,y)|^\gamma
 \,dx\, dt \,dy\,ds
\leq N\kappa^{d+2}\left(|f|^d\right)_{Q_{\kappa
r}}^{\gamma/{(d+1)}}
$$
$$
+N\kappa^{d+2}\left(|D^2u|^{\beta(d+1)}
\right)_{Q_{\kappa r}}^{\gamma/ (\beta d+\beta)}
\mu^{\gamma/ (\beta' d+\beta') }
$$
\begin{equation}
                         \label{ppposc}
+N\kappa^{-\alpha\gamma}\left( |D^2u|^{d+1} \right)^{\gamma/(d+1)}_{Q_{\kappa r}},
\end{equation}
where $N=N(d,\delta ,\beta )$ and $\beta'=\beta/(\beta-1)$.
\end{lemma}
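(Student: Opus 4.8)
The plan is to mimic the proof of Lemma~\ref{osca} line by line, with Lemma~\ref{3.8.1} taking the place of Lemma~\ref{osc} and parabolic cylinders $Q_\rho(t_0,x_0)$ replacing the half--balls. First I would freeze the operator: put $\bar F(u'')=(F)_{Q_{R_0}(t_0,x_0)}(u'')$ when $\kappa r\ge R_0$ and $\bar F(u'')=(F)_{Q_{\kappa r}(t_0,x_0)}(u'')$ otherwise, so that $\bar F$ still satisfies $({\rm H}_1)$ by Remark~\ref{remark 7.30.1b}. Since $\lambda=0$, the given $u$ solves $\partial_t u+\bar F(D^2u)=\hat f$ in $Q_{\kappa r}(t_0,x_0)$, where $\hat f:=\bar F(D^2u)-F(D^2u,t,x)+f$. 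Because $\bar F$ does not depend on $(t,x)$, a translation makes Lemma~\ref{3.8.1} (valid for $\kappa\ge2$) applicable to this equation, and it bounds the left side of \eqref{ppposc} by $N\kappa^{d+2}\bigl(|\hat f|^{d+1}\bigr)^{\gamma/(d+1)}_{Q_{\kappa r}(t_0,x_0)}+N\kappa^{-\alpha\gamma}\bigl(|D^2u|^{d+1}\bigr)^{\gamma/(d+1)}_{Q_{\kappa r}(t_0,x_0)}$.

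The next step is to estimate the average of $|\hat f|^{d+1}$. By the triangle inequality it is at most $N\bigl(|f|^{d+1}\bigr)_{Q_{\kappa r}(t_0,x_0)}+N\bigl(|\bar F(D^2u)-F(D^2u,t,x)|^{d+1}\bigr)_{Q_{\kappa r}(t_0,x_0)}$. Writing $h(t,x)=\sup_{u''\in\cS,\,|u''|=1}|F(u'',t,x)-\bar F(u'')|$, one has $|\bar F(D^2u)-F(D^2u,t,x)|^{d+1}\le h^{d+1}|D^2u|^{d+1}$, and since $u$, hence $D^2u$, vanishes outside $Q_{R_0}(t_0,x_0)$, H\"older's inequality with exponents $\beta,\beta'$ bounds the average of the right side over $Q_{\kappa r}(t_0,x_0)$ by $J_1^{1/\beta}J_2^{1/\beta'}$, where $J_1=\bigl(|D^2u|^{\beta(d+1)}\bigr)_{Q_{\kappa r}(t_0,x_0)}$ and $J_2=\bigl(h^{\beta'(d+1)}I_{Q_{R_0}(t_0,x_0)}\bigr)_{Q_{\kappa r}(t_0,x_0)}$; since $h$ is bounded by a constant depending only on $d,\delta$ (as in Remark~\ref{remark 7.29.1}), $J_2\le N\bigl(h\,I_{Q_{R_0}(t_0,x_0)}\bigr)_{Q_{\kappa r}(t_0,x_0)}$. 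Now the parabolic counterpart of the fact recorded in Remark~\ref{remark 7.29.1} — that Assumption~\ref{assump1} with $\theta=\theta(\mu,d,\delta)$ forces $\int_{Q_\rho(\tau,z)}h\,dx\,dt\le\mu\rho^{d+2}$ for every $\rho\le R_0$, the only change being that $r^{d+2}$ replaces $r^d$ — applied with $\rho=R_0$ when $\kappa r\ge R_0$ (using $Q_{R_0}(t_0,x_0)\subset Q_{\kappa r}(t_0,x_0)$) and with $\rho=\kappa r$ when $\kappa r<R_0$, yields $J_2\le N\mu$ in either case. Hence the average of $h^{d+1}|D^2u|^{d+1}$ over $Q_{\kappa r}(t_0,x_0)$ is at most $N\bigl(|D^2u|^{\beta(d+1)}\bigr)^{1/\beta}_{Q_{\kappa r}(t_0,x_0)}\mu^{1/\beta'}$.

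Finally, I would assemble these estimates, raise the resulting bound on $\bigl(|\hat f|^{d+1}\bigr)_{Q_{\kappa r}(t_0,x_0)}$ to the power $\gamma/(d+1)\le1$ using $(a+b)^{\gamma/(d+1)}\le a^{\gamma/(d+1)}+b^{\gamma/(d+1)}$, substitute into the inequality from the first step, and rewrite $\beta(d+1)=\beta d+\beta$, $\beta'(d+1)=\beta' d+\beta'$; this produces \eqref{ppposc} with $N=N(d,\delta,\beta)$. I expect the only points needing genuine care — everything else being exactly the H\"older bookkeeping of Lemma~\ref{osca} — to be two transcription checks: that translating the cylinders from the origin to $(t_0,x_0)$ costs nothing once the operator has been frozen (so that it no longer depends on $(t,x)$ and Lemma~\ref{3.8.1} applies to a genuine constant--coefficient equation), and that the quantitative VMO reduction in Remark~\ref{remark 7.29.1} carries over verbatim to parabolic cylinders, which it does because its proof uses only the uniform Lipschitz continuity of $F$ and $\bar F$ in $u''$.
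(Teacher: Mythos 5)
Your overall scheme -- freeze the operator at a mean value, apply the constant-coefficient oscillation estimate of Lemma \ref{3.8.1} to the frozen equation, and absorb the error $\bar F(D^2u)-F(D^2u,t,x)$ via H\"older with exponents $\beta,\beta'$ together with the parabolic version of Remark \ref{remark 7.29.1} (split into the cases $\kappa r<R_0$ and $\kappa r\ge R_0$) -- is exactly the paper's proof, and all of that bookkeeping (the bound $|\bar F(D^2u)-F(D^2u,t,x)|\le h\,|D^2u|$, the boundedness of $h$ by $N(d,\delta)$, the final exponent arithmetic) is correct. However, there is a genuine mismatch between what you prove and what the lemma asserts: you have identified two centers that the statement keeps independent. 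In Lemma \ref{pposc}, $Q_r$ and $Q_{\kappa r}$ are $Q_r(0,0)$ and $Q_{\kappa r}(0,0)$ (the convention fixed in Section \ref{sec4}), while $(t_0,x_0)$ is only the center of the support cylinder $Q_{R_0}(t_0,x_0)$ and bears no relation to the origin. You freeze at $(F)_{Q_{\kappa r}(t_0,x_0)}$ and apply Lemma \ref{3.8.1} ``after a translation'' to $(t_0,x_0)$, so what you actually bound is the mean oscillation of $D^2u$ over $Q_r(t_0,x_0)$, not over $Q_r$ as in \eqref{ppposc}. This coincident-center special case is strictly weaker and would not suffice for the sharp-function argument behind Corollary \ref{3.16.1}, where one needs the oscillation estimate on every cylinder irrespective of where the support $Q_{R_0}(t_0,x_0)$ sits; your reliance on the inclusion $Q_{R_0}(t_0,x_0)\subset Q_{\kappa r}(t_0,x_0)$ in the case $\kappa r\ge R_0$ is a symptom of the same conflation, since for the true statement no such inclusion is available.

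The repair is immediate and is what the paper does: perform no translation at all. Set $\bar F=(F)_{Q_{\kappa r}}$ (cylinder at the origin) when $\kappa r<R_0$ and $\bar F=(F)_{Q_{R_0}(t_0,x_0)}$ when $\kappa r\ge R_0$; the frozen equation $\partial_t u+\bar F(D^2u)=\tilde f$ holds in all of $\bR^{d+1}$, so Lemma \ref{3.8.1} applies directly on $Q_{\kappa r}$ and bounds the left-hand side of \eqref{ppposc} over $Q_r$ as required. In the case $\kappa r<R_0$ the smallness $\dashint_{Q_{\kappa r}}h\,dx\,dt\le N\mu$ comes from Assumption \ref{assump1} applied to the cylinder $Q_{\kappa r}$ itself (allowed, since the assumption holds for all $\tau\in\bR$, $z\in\bR^d$), and in the case $\kappa r\ge R_0$ one does not need any inclusion: simply estimate $\int_{Q_{\kappa r}}h\,I_{Q_{R_0}(t_0,x_0)}\,dx\,dt\le\int_{Q_{R_0}(t_0,x_0)}h\,dx\,dt$ and divide by $|Q_{\kappa r}|$ to get $J_2\le N(\kappa r)^{-d-2}R_0^{d+2}\dashint_{Q_{R_0}(t_0,x_0)}h\,dx\,dt\le N\mu$. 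With these corrections your argument coincides with the paper's proof.
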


\begin{proof}
We will basically repeat the proof of
Lemma \ref{osca} adapting it to the parabolic case
and the whole space.
Introduce
$$
\bar{F}(u'')=\left\{
               \begin{array}{ll}
                 (F)_{Q_{R_0}(t_0,x_0)}(u''), & \hbox{if $\kappa r\geq R_0$;} \\
                 (F)_{Q_{\kappa r}}(u''), & \hbox{otherwise,}
               \end{array}
             \right.
$$
and
$$
h(t,x)=\sup_{u''\in \cS: |u''|=1}|F(u'',t,x)-\bar F(u'')|.
$$
Note that
$$
\partial_{t}u +\bar F(D^2 u) =
\tilde f,
$$
where
$$
\tilde{f}(t,x)= f(t,x)+\bar F(D^2 u)-F(D^2 u,t,x).
$$
By Lemma \ref{3.8.1} and the triangle inequality,
$$
\dashint_{Q_r}\dashint_{Q_r} |D^2u(t,x)-D^2u(s,y)|^\gamma \,dx\, dt \,dy\,ds
$$
$$
\le N\kappa^{d+2}\left((
|f|^{d+1})^{\gamma/(d+1)}_{Q_{\kappa r}}
+J^{\gamma/(d+1)}\right)
$$
\begin{equation}
               \label{3.10.1}
+ N\kappa^{-\alpha\gamma}
\left( |D^2u|^{d+1} \right)^{\gamma/(d+1)}_{Q_{\kappa r}},
\end{equation}
where $N=N(d,\delta)$ and
$$
J=\dashint_{Q_{\kappa r}}  | \bar F(D^2 u)-F(D^2 u,t,x)|^{d+1}I_{Q_{R_0}}(t_0,x_0) \, dx\, dt
\leq J_1^{1/\beta}J_2^{1/\beta'},
$$
with
$$
 J_1 =\dashint_{Q_{\kappa r}}  |D^2u|^{\beta(d+1)} \,dx\,dt,
$$
$$
J_2=\dashint_{Q_{\kappa r}}  h^{\beta'(d+1)} I_{Q_{R_0}(t_0,x_0)}\,dx\,dt
\leq N \dashint_{Q_{\kappa r}} h I_{Q_{R_0}(t_0,x_0)} \,dx \,dt.
$$
If $\kappa r < R_0$, we have
$$
J_2\leq N \dashint_{Q_{\kappa r}} h \,dx\,dt \leq N\theta.
$$
If $\kappa r\geq R_0$, we have
$$
J_2 \le N(\kappa r)^{-d-2}\int_{Q_{R_0}(t_0,x_0)}h\,dx\,dt
$$
$$
\le N(\kappa r)^{-d-2}R_0^{d+2}\dashint_{Q_{R_0}(t_0,x_0)}h\,dx\,dt
\le N\mu.
$$
Therefore, in any case,
$$
J\leq
 N\left(\dashint_{Q_{\kappa r}}
| D^2u(x)|^{\beta(d+1)}\,dx\,dt\right)^{1/\beta}\mu^{
1/\beta'}.
$$
Substituting the above inequality back into \eqref{3.10.1}, we
get \eqref{ppposc}. The lemma is proved.
\end{proof}

From Lemma \ref{pposc}, by a standard argument using
Theorem 5.3 of \cite{Kr10}
and the Hardy--Littlewood theorem, we arrive at the following corollary.

\begin{corollary}
                         \label{3.16.1}
Let $p>d+1$, and  $u\in W^{1,2}_{d+1} $ be a
solution of \eqref{vpara} with $\lambda=0$ vanishing outside $Q_{R_0}$.
Then there exist constants N and $\theta$ depending only on $p$, $d$, and
$\delta$, such that if Assumption \ref{assump1} is satisfied with this
$\theta$, then
$$
\|D^2 u\|_{L_p }+ \|\partial_{t}u
\|_{L_p } \leq N\|f\|_{L_p }.
$$
\end{corollary}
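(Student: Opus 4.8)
The plan is to mimic the derivation of Theorem~\ref{thm4.7}(i) and of its elliptic model Corollary~\ref{corollary 3.6.8}, replacing the constant‑coefficient oscillation bound by Lemma~\ref{pposc}. Since $\partial_t u=f-F(D^2u,t,x)$ and $|F(D^2u,t,x)|\le N|D^2u|$ by positive homogeneity and ellipticity of $F$ (this is exactly \eqref{eq16.29}), it suffices to prove $\|D^2u\|_{L_p}\le N\|f\|_{L_p}$. We may also assume $D^2u\in L_p$: this is automatic from interior $W^{1,2}_p$ regularity together with the compact support of $u$ once $f\in L_p$, or can be arranged by a routine mollification/approximation, and it is what makes the absorption step below legitimate. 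Fix $\beta:=\tfrac12\bigl(1+\tfrac{p}{d+1}\bigr)$, so that $1<\beta<p/(d+1)$, and keep $\alpha,\gamma$ as the constants from Lemmas~\ref{3.7.3} and \ref{plin}.

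First I would pass from the cylinder oscillation estimate of Lemma~\ref{pposc} to a pointwise bound on the parabolic sharp function. Working with the filtration of parabolic dyadic cubes in $\bR^{d+1}$ and the associated sharp function $(D^2u)^\#_\gamma$, one uses that every parabolic dyadic cube containing a point $(t,x)$ sits inside a parabolic cylinder of comparable size centered near $(t,x)$; taking $\gamma$‑th roots in \eqref{ppposc} (using $\beta d+\beta=\beta(d+1)$, $\beta'd+\beta'=\beta'(d+1)$), translating, and bounding averages by the parabolic maximal function $\mathbb{M}$, this yields for every $\kappa\ge 2$ and every $(t,x)$
\[
(D^2u)^\#_\gamma(t,x)\le N\kappa^{(d+2)/\gamma}\,\mathbb{M}(|f|^{d+1})^{1/(d+1)}
+N\kappa^{(d+2)/\gamma}\mu^{1/(\beta'(d+1))}\,\mathbb{M}(|D^2u|^{\beta(d+1)})^{1/(\beta(d+1))}
+N\kappa^{-\alpha}\,\mathbb{M}(|D^2u|^{d+1})^{1/(d+1)},
\]
with $N=N(d,\delta,\beta)$ and $\beta'=\beta/(\beta-1)$.

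Next I would take $L_p$ norms, apply the Fefferman--Stein inequality (Theorem~5.3 of \cite{Kr10}) with the pair $(p,\gamma)$ --- valid since $p>d+1>1$ --- and then the Hardy--Littlewood maximal function theorem. For the $f$‑term this uses $p/(d+1)>1$; for the two $D^2u$‑terms it uses $p/(d+1)>1$ and $p/(\beta(d+1))>1$, which hold by the choice of $\beta$, giving $\|\mathbb{M}(|D^2u|^{d+1})^{1/(d+1)}\|_{L_p}\le N\|D^2u\|_{L_p}$ and likewise with exponent $\beta(d+1)$. Hence
\[
\|D^2u\|_{L_p}\le N_0\kappa^{(d+2)/\gamma}\|f\|_{L_p}
+N_0\bigl(\kappa^{(d+2)/\gamma}\mu^{1/(\beta'(d+1))}+\kappa^{-\alpha}\bigr)\|D^2u\|_{L_p},
\]
with $N_0=N_0(d,\delta,p)$.

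Finally I would choose the free parameters in order: first pick $\kappa$ so large that $N_0\kappa^{-\alpha}\le 1/4$, then pick $\mu$ so small that $N_0\kappa^{(d+2)/\gamma}\mu^{1/(\beta'(d+1))}\le 1/4$, and set $\theta:=\theta(\mu,d,\delta)$ from Remark~\ref{remark 7.29.1}; then under Assumption~\ref{assump1} with this $\theta$ the coefficient of $\|D^2u\|_{L_p}$ on the right is $\le 1/2$ and, since $\|D^2u\|_{L_p}<\infty$, it can be absorbed, leaving $\|D^2u\|_{L_p}\le N\|f\|_{L_p}$; the bound for $\partial_t u$ then follows from \eqref{eq16.29}. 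The main obstacle is the middle term: one needs $\beta(d+1)<p$ both so that Hardy--Littlewood controls $\mathbb{M}(|D^2u|^{\beta(d+1)})^{1/(\beta(d+1))}$ by $\|D^2u\|_{L_p}$ and so that the gain $\mu^{1/(\beta'(d+1))}$ can be driven below any prescribed threshold \emph{uniformly in $u$ and $f$} --- this is precisely where $p>d+1$ enters --- while also keeping track that the resulting $\theta$ depends only on $d,p,\delta$ (through $\mu$ and $\kappa$).
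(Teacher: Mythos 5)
Your argument is correct and is essentially the paper's own proof: the paper dispatches this corollary as ``a standard argument using Theorem 5.3 of \cite{Kr10} and the Hardy--Littlewood theorem,'' and your writeup simply spells out that argument (sharp-function bound from Lemma \ref{pposc}, Fefferman--Stein with the pair $(p,\gamma)$, Hardy--Littlewood with $\beta(d+1)<p$, then choosing $\kappa$ large and $\mu$ small to absorb), exactly mirroring the elliptic template of Corollary \ref{corollary 3.6.8}. Your extra remark justifying $\|D^2u\|_{L_p}<\infty$ before absorption is a sensible precaution that the paper leaves implicit.
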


For any $T\in [-\infty,\infty)$, we denote
$$
\bR^{d+1}_T= \{(t,x)\in \bR^{d+1}: t>T\}.
$$
The main result of this section is the following theorem.

\begin{theorem}
                           \label{7.13.2}
Let $p>d+1$ and $T\in [-\infty,\infty)$. Then there exist
$\theta\in (0,1]$,
 depending only on $d,\delta,p$
 and a constant $\lambda_0$, depending only on $d,\delta,p$,
 and $R_0$,
such that if Assumption \ref{assump1} is satisfied with this
$\theta$, then

(i) For any $\lambda\ge \lambda_0$ and any $u\in W^{1,2}_{d+1}(\bR^{d+1}_T) $ satisfying
\eqref{vpara}, we have
\begin{equation}
                                   \label{3.17.1}
\lambda\|u\|_{L_p(\bR^{d+1}_T)}
+\|\partial_{t}u\|_{L_p(\bR^{d+1}_T)}+
\|D^2 u\|_{L_p(\bR^{d+1}_T)}
\leq N\|f\|_{L_p(\bR^{d+1}_T)},
\end{equation}
where $N=N(d,\delta,p)$

(ii) For any $\lambda>0$, there exists a constant
$N=N(d,p, \delta, R_0,\lambda)$ such that for any $u\in
W^{1,2}_p(\bR^{d+1}_T)$ satisfying \eqref{vpara} we have
\begin{equation}
                                   \label{eq18.26}
\|u\|_{W^{1,2}_p(\bR^{d+1}_T)} \leq N\|f\|_{L_p(\bR^{d+1}_T)}.
\end{equation}

(iii) For any $\lambda>0$ and $f\in L_p(\bR^{d+1}_{T})$, there exists a unique solution of \eqref{vpara} in $W^{1,2}_p(\bR^{d+1}_T)$.
\end{theorem}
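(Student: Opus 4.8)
The plan is to follow the proofs of Theorems~\ref{thm3.4} and~\ref{theorem 8.5.1} from the elliptic half-space case, replacing each elliptic ingredient by its parabolic analogue: Corollary~\ref{3.16.1} takes the place of Corollary~\ref{corollary 3.6.8}; the a priori bound $\lambda\|u\|_{L_p}\le N\|f\|_{L_p}$ with $N=N(d,p,\delta)$ for linear equations $\partial_t u+Lu-\lambda u=f$ with $L\in\bL_\delta$, obtained inside the proof of Theorem~\ref{thm4.7}, takes the place of Lemma 3.5.5 of~\cite{Kr87}; and the classical $C^{1+\alpha/2,2+\alpha}$ solvability and stability theorems of~\cite{Kr87} are used exactly where the elliptic proof uses their elliptic versions. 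Throughout, the geometric fact that every parabolic cylinder $Q_r(\tau,z)$ with $\tau\ge T$ lies inside $\bR^{d+1}_T$ makes the initial slice $\{t=T\}$ essentially invisible to the localization arguments; in particular no reflection across $\{t=T\}$ is needed in the estimates, unlike the reflections used in the elliptic half-space.

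For the a priori estimates (i) and (ii), fix a nonnegative $\zeta\in C^\infty_0(\bR^{d+1})$ supported in a translate of $Q_{R_0}$ with $\int\zeta^p=1$, and for $z=(\tau,\xi)$ with $\tau\ge T$ set $u_z(t,x)=u(t,x)\zeta(z-(t,x))$, which is supported in a translate of $Q_{R_0}$ contained in $\bR^{d+1}_T$. By positive homogeneity of $F$ in $u''$, the function $u_z$ (extended by zero) solves~\eqref{vpara} with right-hand side
\[
f_z = f\,\zeta(z-\cdot) + u\,\partial_t\zeta(z-\cdot) + \bigl[F(D^2(\zeta u),\cdot)-\zeta\,F(D^2u,\cdot)\bigr],
\]
the last bracket being bounded in absolute value by $N\bigl(|D\zeta(z-\cdot)||Du|+|D^2\zeta(z-\cdot)||u|\bigr)$ by the Lipschitz continuity of $F$ in $u''$. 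Applying Corollary~\ref{3.16.1} to $u_z$, raising to the power $p$, integrating in $z$ over $\{\tau\ge T\}$ and using $\int\zeta^p\,dz=1$ (the only delicate point being the slab $\{T<t<T+R_0^2\}$, where one first restricts $u$ to that bounded-in-time slab, localizes in $x$ so as to land inside a fixed cylinder contained in $\bR^{d+1}_T$, and applies Corollary~\ref{3.16.1} there, picking up $R_0$-dependent constants), one obtains
\[
\|D^2u\|_{L_p}^p+\|\partial_t u\|_{L_p}^p\le N_1\bigl(\|f\|_{L_p}^p+\lambda^p\|u\|_{L_p}^p\bigr)+N_2\bigl(\|Du\|_{L_p}^p+\|u\|_{L_p}^p\bigr)
\]
with norms over $\bR^{d+1}_T$, $N_1=N_1(d,p,\delta)$ and $N_2=N_2(d,p,\delta,R_0)$. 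Combining this with $\lambda\|u\|_{L_p}\le N\|f\|_{L_p}$, absorbing $\|Du\|_{L_p}$ via $\|Du\|_{L_p}\le\varepsilon\|D^2u\|_{L_p}+N_\varepsilon\|u\|_{L_p}$, and choosing $\lambda_0=\lambda_0(d,p,\delta,R_0)$ so large that the residual $\|u\|_{L_p}^p$ term is absorbed into $\lambda^p\|u\|_{L_p}^p$ when $\lambda\ge\lambda_0$, yields~\eqref{3.17.1}; for a general $\lambda>0$ one keeps $N_2$, uses $\|u\|_{L_p}\le N\lambda^{-1}\|f\|_{L_p}$, and reads off~\eqref{eq18.26} with $N=N(d,p,\delta,R_0,\lambda)$.

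For the solvability (iii) I would repeat the scheme of Theorem~\ref{theorem 8.5.1}, first in the whole space $\bR^{d+1}$ (the case $T=-\infty$). Assuming $F$ smooth in $(t,x)$ and, after a mollification in $u''$, smooth in all arguments and in the suitable concavity class, classical parabolic theory from~\cite{Kr87} (plus simple barriers for exponential decay as $t\to\infty$) provides a $C^{1+\alpha/2,2+\alpha}$ solution of~\eqref{vpara} for $f\in C^\infty_0$, which belongs to $W^{1,2}_p$ by~\eqref{eq18.26}. For a general $F$, mollify via $F_n(u'',t,x)=\int F(u'',(t,x)+y/n)\zeta(y)\,dy$ (which inherits $({\rm H}_1)$ and Assumption~\ref{assump1} with the same constants), approximate $f$ by $f_n\in C^\infty_0$, obtain $u_n\in W^{1,2}_p$ with uniformly bounded norms, extract a subsequence $u_n\to u$ uniformly on compacta, and pass to the limit via $\overline F_{n_0}=\sup_{n\ge n_0}F_n$, $\underline F_{n_0}=\inf_{n\ge n_0}F_n$, the estimate $\overline F_{n_0}(u'',t,x)-\underline F_{n_0}(u'',t,x)\le\varepsilon_{n_0}(t,x)|u''|$ with $\varepsilon_{n_0}\to0$ a.e., and the parabolic analogues of Theorems 3.5.15 and 3.5.6 of~\cite{Kr87}, exactly as around~\eqref{8.5.1}. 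Uniqueness is immediate from (ii): the difference $w$ of two solutions satisfies~\eqref{vpara} with $f=0$, hence $\|w\|_{W^{1,2}_p}=0$. Finally, for finite $T$ one extends $f$ by zero and $F$ by reflection across $\{t=T\}$ (which keeps Assumption~\ref{assump1} valid, up to a dimensional factor in $\theta$), solves on $\bR^{d+1}$, and restricts; uniqueness again follows from (ii) applied to the difference of two $W^{1,2}_p(\bR^{d+1}_T)$ solutions.

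I expect the limit passage in (iii) to be the main obstacle. With only VMO coefficients one has neither $F_n\to F$ nor $F_n(D^2u_n,\cdot)\to F(D^2u,\cdot)$ in any sense involving the derivatives, so identifying the limit $u$ as a solution rests entirely on the sandwiching by $\overline F_{n_0}$ and $\underline F_{n_0}$ together with Krylov's measurable-selection and stability machinery; verifying the hypotheses of that machinery -- in particular the a.e.\ convergence $F_n(u'',\cdot)\to F(u'',\cdot)$ for each fixed $u''$ and the smallness of $\overline F_{n_0}-\underline F_{n_0}$, both of which rely on positive homogeneity and uniform-in-$(t,x)$ Lipschitz continuity of $F$ in $u''$ -- is where the real work lies. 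A secondary, more bookkeeping-type point is the handling of the slab near $\{t=T\}$ in the a priori estimate and the verification that the reflected extension of $F$ satisfies Assumption~\ref{assump1}; as noted, the backward parabolic geometry makes both of these strictly easier than in the elliptic half-space.
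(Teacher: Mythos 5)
Your treatment of the core case $T=-\infty$ coincides with the paper's proof: localization by $\zeta$ supported in a shifted $Q_{R_0}$, Corollary \ref{3.16.1}, the bound $\lambda\|u\|_{L_p}\le N\|f\|_{L_p}$ from the linear theory (Lemma 3.5.5 of \cite{Kr87}), interpolation and a large $\lambda_0$; and your solvability argument (mollify $F$ in $(t,x)$, solve classically, pass to the limit via $\overline{F}_{n_0}$, $\underline{F}_{n_0}$ and the stability theorems of \cite{Kr87}) is exactly the scheme of Theorem \ref{theorem 8.5.1} that the paper invokes for assertion (iii).

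The genuine gap is in the case $T>-\infty$ of assertions (i) and (ii). Your normalization $\int_{\{\tau\ge T\}}\zeta^p(z-(t,x))\,dz=1$ fails on the slab $\{T<t<T+R_0^2\}$: since the cylinders $Q_{R_0}(\tau,\xi)$ open forward in time, a point with $t$ close to $T$ is met only by cylinders whose bottom $\tau$ lies in $[T,t)$, so the weight obtained by integrating over admissible $z$ tends to zero as $t\downarrow T$, and the integrated inequality gives no control of $D^2u$ and $\partial_t u$ on that slab. Your proposed repair does not work as stated: the restriction of $u$ to the slab, cut off in $x$ (and in forward time), does not vanish near $\{t=T\}$, so its extension by zero is not in $W^{1,2}_{d+1}(\bR^{d+1})$ and is not an admissible input for Corollary \ref{3.16.1}; what is needed near the initial slice, where no condition on $u$ is imposed, is a boundary-type estimate that the interior corollary cannot deliver. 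The paper avoids localizing on $\bR^{d+1}_T$ altogether: it extends $f$ by zero for $t\le T$ (no reflection of $F$ is needed, since in this section Assumption \ref{assump1} is imposed with $\cD=\bR^{d+1}$, so $F$ is already defined for all $t$), solves \eqref{vpara} in the whole space by the $T=-\infty$ results, and identifies the given $u\in W^{1,2}_p(\bR^{d+1}_T)$ with the restriction of the whole-space solution by a uniqueness argument resting on linear equations with measurable coefficients and the parabolic Aleksandrov estimate; \eqref{3.17.1} and \eqref{eq18.26} then follow from the whole-space estimates. Note that this uniqueness must be proved independently of (ii); in your write-up uniqueness for finite $T$ is deduced from (ii), which is precisely the assertion whose proof has the gap, so your argument is circular at that point. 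Once uniqueness is established directly, the slab difficulty disappears and the time-reflection of $F$ you propose becomes unnecessary.
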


\begin{proof}
First we assume $T=-\infty$.
The proof of Theorem  \ref{theorem 8.5.1} shows that
assertion (iii) follows from (i) and (ii).
We suppose that Assumption \ref{assump1} holds with $\theta$ from
Corollary \ref{3.16.1}.

Take a nonnegative function $\zeta\in C^{\infty}$ which
 has support in $-Q_{R_0}$ and is such that $\zeta^p$ integrates to one.
Fix $(s,y)\in\bR^{d+1}$, and define
$$
 u_{(s,y)} (t,x)=u(t,x)\zeta(s-t,y-x),
$$
Then $u_{(s,y)}(t,x)$ is supported in $Q_{R_0}(s,y)$, and
$$
\partial_{t}
u_{(s,y)}+F(u_{(s,y)},t,x)=f_{(s,y)},
$$
where
$$
f_{(s,y)}(t,x)=f(t,x)\zeta(s-t,y-x)+F(u_{(s,y)},t,x)
$$
$$
-F(\zeta(s-t,y-x)D^2u,t,x)
- (\partial_{t}\zeta)(s-t,y-x)u+\lambda u_{(s,y)}.
$$
By Corollary \ref{3.16.1} and condition $({\rm H}_1)$,
$$
\|\zeta(s-\cdot,y-\cdot)\partial_{t}u \|_{L_p}^p +
\|\zeta(s-\cdot,y-\cdot)D^2 u\|_{L_p}^p
$$
$$
\leq N \|\zeta(s-\cdot,y-\cdot)f\|_{L_p}^p+N\|
|D\zeta(s-\cdot,y-\cdot)|Du\|_{L_p}^p
$$
$$
+\left\|\left(|\partial_{t}\zeta |
+|D^2\zeta|
+\lambda |\zeta| \right)
(s-\cdot,y-\cdot)
u\right\|_{L_p}^p.
$$
Integrating the above inequality over $(s,y)\in\bR^{d+1}$ we get
$$
\|\partial_{t}u \|^p_{L_p}+\|D^2u\|^p_{L_p}
\leq N_1(\|f\|_{L_p}^p+\lambda^p \|u\|_{L_p}^p)
$$
$$
+N_2(\|Du\|^p_{L_p}+\|u\|^p_{L_p}),
$$
where $N_1=N_1(d,\delta,p)$ and $N_2=N_2(d,\delta,p,R_0)$.
Now to conclude \eqref{3.17.1} and \eqref{eq18.26},
it suffices to use again the proof of Lemma 3.5.5 of \cite{Kr87} as in
Theorem \ref{thm3.4}. This completes the proof of the
 theorem in the
special case when $T=-\infty$.

For $T>-\infty$, we extend $f$ to be zero for $t\le T$,
and then find a unique solution $\tilde u\in W^{1,2}_p(\bR^{d+1})$ of
\eqref{vpara} in $\bR^{d+1}$, the existence of which is guaranteed by the
argument above. This in turn also yields the existence of a solution of
\eqref{vpara} in $\bR^{d+1}_T$
satisfying \eqref{3.17.1} or \eqref{eq18.26} as appropriate.
Its uniqueness in $ W^{1,2}_p(\bR^{d+1}_T)$ follows
as usual from the uniqueness for linear equations
(with measurable coefficients) and parabolic Alexandrov's estimates.
 The theorem is proved.
\end{proof}

We finish the section by proving the following result
about the Cauchy problem. Denote
by $\WO^{1,2}_p((0,T)\times
\bR^{d})$ the set of functions of class $W^{1,2}_p((0,T)\times
\bR^{d})$ having zero trace on the plane $\{(T,x):x\in\bR^{d}\}$.
\begin{theorem}
Let $p>d+1$ and $T>0$. Then there exists $\theta\in (0,1]$
depending only on $d,\delta,p$, such that if Assumption \ref{assump1} is
satisfied with this $\theta$, the following assertions hold:

(i) For any $v\in W^{1,2}_p((0,T)\times\bR^{d})$ and
$f\in L_p((0,T)\times\bR^{d})$, there exists a unique solution $u\in
W^{1,2}_{p}((0,T)\times \bR^{d})$ of \eqref{vpara} in $(0,T)\times
\bR^{d}$ with $\lambda=0$ satisfying $u-v\in \WO^{1,2}_p((0,T)\times
\bR^{d})$.

(ii) Moreover,
\begin{equation}
                        \label{eq17.49}
\|u\|_{W^{1,2}_p((0,T)\times \bR^{d})}\le N\|v\|_{W^{1,2}_p((0,T)
\times \bR^{d})}+N\|f\|_{L_p((0,T)\times \bR^{d})},
\end{equation}
where $N=N(d,\delta,p,T,R_{0})$.
\end{theorem}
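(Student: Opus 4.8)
The plan is to fix $\theta$ and $\lambda_0$ as in Theorem \ref{7.13.2} and to deduce everything from that theorem by two elementary reductions: an exponential substitution in the time variable, which on a finite strip converts the $\lambda=0$ equation into \eqref{vpara} with $\lambda=\lambda_0$; and the passage to $w=u-v$, which disposes of the terminal data as far as a priori estimates go. Existence for general $v$ will then be obtained by a continuity argument along $F_\sigma=(1-\sigma)\tr+\sigma F$ (equivalently, by the approximation scheme in the proof of Theorem \ref{theorem 8.5.1}).

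\textbf{The case $v\equiv 0$.} Let $g\in L_p((0,T)\times\bR^d)$ and let $w\in\WO^{1,2}_p((0,T)\times\bR^d)$ solve $\partial_t w+F(D^2w,t,x)=g$. Put $\tilde w=e^{\lambda_0(t-T)}w$; since $F$ is positive homogeneous of degree one in $u''$ and $e^{\lambda_0(t-T)}>0$, $\tilde w$ solves $\partial_t\tilde w+F(D^2\tilde w,t,x)-\lambda_0\tilde w=e^{\lambda_0(t-T)}g$. As $\tilde w$ has zero trace on $\{t=T\}$, its extension by zero to $\{t\ge T\}$ lies in $W^{1,2}_p(\bR^{d+1}_0)$ and solves \eqref{vpara} on $\bR^{d+1}_0$ with $\lambda=\lambda_0$ and right-hand side $e^{\lambda_0(t-T)}g\,I_{(0,T)}$; since $e^{\lambda_0(t-T)}\le 1$ there, \eqref{eq18.26} gives $\|\tilde w\|_{W^{1,2}_p(\bR^{d+1}_0)}\le N\|g\|_{L_p((0,T)\times\bR^d)}$, hence $\|w\|_{W^{1,2}_p((0,T)\times\bR^d)}\le N\|g\|_{L_p((0,T)\times\bR^d)}$ with $N=N(d,\delta,p,T,R_0)$. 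For \emph{existence}, run this backwards: set $\hat g=e^{\lambda_0(t-T)}g$ on $(0,T)\times\bR^d$, extend $\hat g$ by zero to $\{t\ge T\}$, and use Theorem \ref{7.13.2}(iii) on $\bR^{d+1}_0$ with $\lambda=\lambda_0$ to obtain $\tilde w\in W^{1,2}_p(\bR^{d+1}_0)$ solving $\partial_t\tilde w+F(D^2\tilde w,t,x)-\lambda_0\tilde w=\hat g$. Because $\hat g\equiv 0$ on $\bR^{d+1}_T$, the a priori bound \eqref{3.17.1}, available for $\lambda=\lambda_0$, forces $\tilde w\equiv 0$ on $\bR^{d+1}_T$; thus the restriction of $\tilde w$ to $(0,T)\times\bR^d$ has zero trace on $\{t=T\}$, and $w:=e^{-\lambda_0(t-T)}\tilde w$ lies in $\WO^{1,2}_p((0,T)\times\bR^d)$ and solves $\partial_t w+F(D^2w,t,x)=g$, with the estimate just proved.

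\textbf{General $v$, uniqueness, conclusion.} If $u$ is a solution as in the theorem and $w=u-v$, then $w\in\WO^{1,2}_p((0,T)\times\bR^d)$ and $\partial_t w+F(D^2w,t,x)=g$ with $g=f-\partial_t v+[F(D^2w,t,x)-F(D^2w+D^2v,t,x)]$; by $({\rm H}_{1})$ the bracket is bounded by $N|D^2v|$, so $\|g\|_{L_p((0,T)\times\bR^d)}\le\|f\|_{L_p((0,T)\times\bR^d)}+N\|v\|_{W^{1,2}_p((0,T)\times\bR^d)}$, and the $v\equiv0$ estimate applied to $w$ gives \eqref{eq17.49}. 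For existence with general $v$, I would join $F$ to $\tr$ by $F_\sigma=(1-\sigma)\tr+\sigma F$; each $F_\sigma$ satisfies $({\rm H}_{1})$ and Assumption \ref{assump1} with the \emph{same} $\theta$ and with ellipticity controlled by $\delta$, so the $v\equiv0$ a priori estimate holds with one constant $N$ for all $\sigma$. At $\sigma=0$ the Cauchy problem is classical $L_p$-theory for the heat equation (e.g.\ \cite{LSU}); the set of solvable $\sigma$ is open, since for $\sigma$ near $\sigma_0$ one solves by a fixed-point iteration based on solvability at $\sigma_0$ and the $v\equiv0$ estimate, as $F_\sigma-F_{\sigma_0}$ has Lipschitz constant $O(|\sigma-\sigma_0|)$ in $u''$; and it is closed by the uniform a priori bound, extracting a locally uniform limit of solutions and identifying it by the measurable-selection argument from the proof of Theorem \ref{theorem 8.5.1}, which also gives $u-v\in\WO^{1,2}_p$ because the approximating differences have uniformly bounded $\WO^{1,2}_p$ norms. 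Uniqueness is routine: two solutions differ by a $w\in\WO^{1,2}_p((0,T)\times\bR^d)$ solving $\partial_t w+a^{ij}D_{ij}w=0$ with $a\in\cS_\delta$ measurable and zero terminal data, so $w\equiv0$ by the parabolic Alexandrov estimate.

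\textbf{Main difficulty.} The genuinely new point is the first step, and there the only thing to watch is the bookkeeping in the time variable: one must substitute precisely with $\lambda=\lambda_0$ (not an arbitrary positive $\lambda$) so that \eqref{3.17.1} is available on the half-space $\bR^{d+1}_T$, and it is that bound, combined with the zero-extension of the right-hand side past $t=T$, that yields the vanishing terminal trace and hence membership in $\WO^{1,2}_p((0,T)\times\bR^d)$. The existence step for general $v$ is standard; its only cost is verifying that the continuity (or approximation) scheme already used in the paper applies here and that the boundary condition survives the limit.
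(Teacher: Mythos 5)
Your a priori estimate is essentially the paper's own argument: reduce to $v\equiv 0$ via $w=u-v$ (using the Lipschitz continuity of $F$ in $u''$ to absorb $F(D^2w+D^2v,\cdot)-F(D^2w,\cdot)$ into $N\|v\|_{W^{1,2}_p}$), make an exponential substitution in $t$ to pass from $\lambda=0$ to a positive $\lambda$, extend by zero for $t>T$ using the vanishing terminal trace, and invoke Theorem \ref{7.13.2}. The paper uses $\hat u=e^{t}u$ and estimate \eqref{eq18.26} with $\lambda=1$; your insistence on $\lambda=\lambda_0$ is harmless but unnecessary for the estimate, since \eqref{eq18.26} is available for every $\lambda>0$ (you only need \eqref{3.17.1}, or equivalently uniqueness on $\bR^{d+1}_T$, for your auxiliary claim that the solution with right-hand side vanishing for $t\ge T$ itself vanishes there, which is fine). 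So parts (ii) and the $v\equiv0$ solvability are correct and match the paper.

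The genuine problem is your primary existence route for general $v$, the continuity method along $F_\sigma=(1-\sigma)\,\mathrm{tr}+\sigma F$. The openness step as you describe it does not go through: in the fixed-point iteration the difference of two iterates $u_1-u_2$ satisfies only a \emph{linear} equation $\partial_t(u_1-u_2)+a^{ij}D_{ij}(u_1-u_2)=h$ with some merely measurable $a\in\cS_\delta$ produced by the convexity/Lipschitz representation of $F_{\sigma_0}(D^2u_1)-F_{\sigma_0}(D^2u_2)$, and there is no $W^{1,2}_p$ estimate for such equations (this is exactly what the VMO hypothesis is for; the a priori estimate \eqref{eq17.49} applies to solutions of the nonlinear equation, not to differences of solutions). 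Hence ``solvability at $\sigma_0$ plus the $v\equiv0$ estimate'' does not yield a contraction, and openness is unproved. The correct route — which you mention parenthetically and should make the actual argument — is the paper's: as in the proof of Theorem \ref{theorem 8.5.1}, mollify $F$ in $x$ and smooth the data, solve the resulting problems classically (convexity of $F$ in $u''$ from $({\rm H}_1)$ suffices, no continuity method needed), use the uniform a priori bound \eqref{eq17.49}, and pass to the limit with the $\bar F_{n_0}$, $\underline F_{n_0}$ sandwich and the stability theorems of \cite{Kr87}; the zero terminal/boundary condition survives exactly as in that proof. Your uniqueness argument (linear equation with measurable coefficients plus the parabolic Alexandrov estimate) is the same as the paper's and is fine.
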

\begin{proof}
 As in the proof of Theorem  \ref{theorem 8.5.1},
it suffices to prove \eqref{eq17.49} as an a priori estimate.
By considering $u-v$ instead of the unknown function $u$, without loss of
generality we may assume that $v\equiv 0$.
Furthermore, having in mind the possibility of substitution
$\hat u=e^tu $, we see that it suffices to consider equation
\eqref{assump1} with
$\lambda=1$.  We extend $u$ to be zero for $t>T$. It is easily seen that
the extended $u\in W^{1,2}_p(\bR^{d+1}_0)$
satisfies \eqref{vpara} in $\bR^{d+1}_0$ with $f(t,x)=0$ for
$t \ge T$. Estimate \eqref{eq17.49} then follows from Theorem \ref{7.13.2} (ii).
\end{proof}

\mysection{Parabolic Bellman's equations in
$\bR^{d+1}_+$ with constant coefficients}
                                        \label{sec6}

In this section, we consider equation \eqref{para0} in the half space
$$
\bR^{d+1}_+:=\bR\times\bR^{d}_+.
$$
 For $r>0$, $t\in \bR$ and $x=(x^1,x')\in \bR^d_+$, denote
$$
Q_r^+(t,x)=Q_r(t,x)\cap \bR^{d+1}_+,\quad Q_r^+= Q_r^+(0,0),
\quad Q_r^+(x^1)=Q^+_r(0,x^1,0).
$$

The following lemma can be deduced from Corollary \ref{3.7.2b} in the same way as Lemma \ref{3.6.2} is proved.
\begin{lemma}
                          \label{3.21.1}
Let $p>(d+2)/2$ and $r\in(0,\infty)$.  Then for any $u\in
 W^{1,2}_p(Q_r^+) $ vanishing on $x^1=0$, we have
$$
\sup_{(t,x)\in {Q_{r}^+}} |u- x^{1}(D_1u)_{Q^+_{r}
}|^{p}\leq Nr^{2p} \dashint_{Q_{r}^+}(|D^2u|+ |\partial_{t}u |)^p \,dx\,dt.
$$
where $N$ depends only on $d$ and $p$.
\end{lemma}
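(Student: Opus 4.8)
The statement is a half-space, parabolic analogue of Lemma \ref{3.6.2}, and the comment in the paper already tells us the intended route: deduce it from the whole-space estimate in Corollary \ref{3.7.2b} by an odd-reflection trick. So the plan is as follows. First I would let $\tilde u$ be the odd extension of $u$ across the hyperplane $\{x^1=0\}$, i.e. $\tilde u(t,x^1,x')=u(t,|x^1|,x')\,\operatorname{sgn}(x^1)$. Since $u$ vanishes on $x^1=0$ and is in $W^{1,2}_p(Q_r^+)$, the standard reflection lemma (the parabolic version of Lemma 8.2.1 of \cite{Kr08}, exactly as invoked in the proof of Lemma \ref{3.6.2}) gives $\tilde u\in W^{1,2}_p(Q_r)$, with $D^2\tilde u$ and $\partial_t\tilde u$ being the corresponding odd/even reflections of $D^2u$ and $\partial_t u$; in particular their $L_p$ norms over $Q_r$ are comparable (up to a dimensional constant) to those of $D^2u$, $\partial_t u$ over $Q_r^+$.

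Next I would record the mean-value identities for the reflected function analogous to those in Lemma \ref{3.6.2}: by oddness in $x^1$ we have $(\tilde u)_{Q_r}=0$ and $(D_i\tilde u)_{Q_r}=0$ for $i\ge 2$, while $(D_1\tilde u)_{Q_r}=(D_1 u)_{Q_r^+}$ because $D_1\tilde u$ is the even reflection of $D_1 u$. Consequently the affine correction $(\tilde u)_{Q_r}+x^i(D_i\tilde u)_{Q_r}$ in Corollary \ref{3.7.2b} collapses to exactly $x^1(D_1 u)_{Q_r^+}$, which is precisely the quantity subtracted off in the statement of Lemma \ref{3.21.1}.

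Then I would simply apply Corollary \ref{3.7.2b} to $\tilde u$ on $Q_r$:
$$
\sup_{Q_r}\big|\tilde u(t,x)-(\tilde u)_{Q_r}-x^i(D_i\tilde u)_{Q_r}\big|^p
\le N r^{2p}\dashint_{Q_r}\big(|D^2\tilde u|+|\partial_t\tilde u|\big)^p\,dx\,dt.
$$
Restricting the supremum on the left to $Q_r^+$ and using the two observations above, the left side is bounded below by $\sup_{Q_r^+}|u-x^1(D_1u)_{Q_r^+}|^p$. On the right, the integrand over $Q_r$ is, by the reflection structure, controlled by the corresponding integrand over $Q_r^+$ after adjusting the averaging constant by a factor depending only on $d$ (the two half-cylinders that make up $Q_r$ each contribute the same integral). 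Absorbing all such dimensional factors into $N$ yields the claimed inequality with $N=N(d,p)$.

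The only step that requires a little care — and the natural candidate for the "main obstacle" — is the justification that the odd extension $\tilde u$ genuinely lies in $W^{1,2}_p(Q_r)$ with the expected (weak) second spatial derivatives and time derivative, i.e. that no singular distribution is created on $\{x^1=0\}$. This is exactly where the vanishing trace hypothesis $u|_{x^1=0}=0$ is used, and it is handled by the same reflection lemma already cited in the proof of Lemma \ref{3.6.2}, applied slicewise in $t$ (the time variable plays no role in the reflection, so the parabolic case reduces to the elliptic one for a.e. $t$, together with the obvious fact that $\partial_t\tilde u$ is the odd reflection of $\partial_t u$). Everything else is a routine bookkeeping of averaging constants.
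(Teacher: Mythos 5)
Your proposal is correct and follows exactly the route the paper intends: odd reflection across $\{x^1=0\}$ (justified by the vanishing trace and the reflection lemma, as in the proof of Lemma \ref{3.6.2}), the observation that the affine correction in Corollary \ref{3.7.2b} reduces to $x^1(D_1u)_{Q_r^+}$, and then a comparison of averages over $Q_r$ and $Q_r^+$. No gaps; this matches the paper's one-line derivation of Lemma \ref{3.21.1} from Corollary \ref{3.7.2b}.
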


\begin{lemma}
                        \label{3.21.2}
Let $r\in(0, \infty)$, $\kappa\geq2$, and
$v\in C (\bar{Q}_{\kappa r}^+)
\cap C^{1,2}_{b}(Q^{+}_{\kappa\rho})$ for any $\rho\in(0,r)$. Assume that
$v$ is a solution of \eqref{para0} with
$f\equiv0$ and $v=0$ on $x^1=0$. Then there are constants
$\alpha\in(0,1)$ and $N$, depending only
on $d$ and $\delta$, such that
$$
[D^2 v]_{ C^\alpha(Q_r^+)}\leq N   (\kappa r)^{-2-\alpha}
 \sup_{\partial' Q^+_{\kappa r}}|v|.
$$
\end{lemma}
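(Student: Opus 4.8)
The plan is to imitate the proof of Lemma~\ref{3.6.3} (the elliptic boundary analogue) together with that of Lemma~\ref{3.7.3} (the interior parabolic version). First I would rescale: with $\rho=\kappa r$ and $v_{\rho}(t,x)=\rho^{-2}v(\rho^{2}t,\rho x)$, the function $v_{\rho}$ again solves \eqref{para0} with $f\equiv0$, now in $Q^{+}_{1}$, vanishes on $\{x^{1}=0\}$, is continuous up to $\partial'Q^{+}_{1}$, and a direct computation shows that the asserted inequality for $v$ on $Q^{+}_{r}$ is equivalent to the same inequality for $v_{\rho}$ on $Q^{+}_{1/\kappa}$ with $\kappa r$ replaced by $1$. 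Hence it suffices to treat the case $\kappa r=1$. Since then $r=1/\kappa\le1/2$, so that $Q^{+}_{r}\subset Q^{+}_{1/2}$, the lemma reduces to the a priori estimate
\[
[D^{2}v]_{C^{\alpha}(Q^{+}_{1/2})}\le N\sup_{Q^{+}_{1}}|v|,
\]
after which the maximum principle for the uniformly parabolic equation satisfied by $v$ bounds $\sup_{Q^{+}_{1}}|v|$ by $\sup_{\partial'Q^{+}_{1}}|v|$ and completes the argument.

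What remains is a boundary $C^{1+\alpha/2,2+\alpha}$ estimate, up to the flat portion $\{x^{1}=0\}$ of the parabolic boundary, for classical solutions of the convex homogeneous constant-coefficient equation $\partial_{t}v+F(D^{2}v)=0$ that vanish on that flat portion; this is the parabolic counterpart of the estimate quoted from \cite{SA,Sa94} in the proof of Lemma~\ref{3.6.3}. I would derive it from the boundary version of the Evans--Krylov theory for convex uniformly parabolic equations, via the approximation procedure already used in Remark~\ref{remark 7.14.1}: mollify $F$ in $u''$ to obtain smooth, uniformly parabolic, convex $F^{n}\to F$ with $|F^{n}-F^{n}_{u_{ij}}u_{ij}|\le1$; approximate the continuous boundary data $v|_{\partial'Q^{+}_{1}}$ uniformly by smooth functions; solve the resulting terminal--lateral value problems in $Q^{+}_{1}$ (solvability in $C^{1,2}(Q^{+}_{1})\cap C(\bar{Q}^{+}_{1})$ being classical, cf.\ \cite{Kr87}); apply the classical Schauder estimate near the flat boundary for these smooth convex operators to obtain uniform $C^{1+\alpha/2,2+\alpha}(\bar{Q}^{+}_{1/2})$ bounds with constant depending only on $d$ and $\delta$; and pass to the limit, using the maximum principle and the equicontinuity of the second derivatives exactly as in Remark~\ref{remark 7.14.1}. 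Alternatively one may invoke the parabolic boundary estimate of \cite{Wa92}.

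The main obstacle is precisely this boundary-regularity input. In the interior parabolic case one simply cites Theorem~5.5.2 of \cite{Kr87}, and in the elliptic boundary case one cites \cite{SA,Sa94}; the parabolic boundary estimate is less standard --- indeed, as recalled in the introduction, the boundary statement in \cite{Wa92} is given without proof --- so the care lies in making the approximation of Remark~\ref{remark 7.14.1} go through at the boundary. The two ingredients that must be checked are the solvability of each mollified terminal--lateral value problem in $\bar{Q}^{+}_{1}$ with the prescribed continuous data (this uses the convexity of $F^{n}$ and needs no corner compatibility, since one argues entirely through the maximum principle) and the classical boundary $C^{2+\alpha}$ estimate for a smooth convex uniformly parabolic operator on $Q^{+}_{1/2}$ with vanishing data on $\{x^{1}=0\}$, with constant controlled by $d$ and $\delta$ alone. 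Everything else --- the scaling reduction and the final application of the maximum principle --- is routine.
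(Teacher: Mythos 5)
Your proposal is correct and is essentially the paper's argument: after rescaling to $\kappa r=1$, the paper likewise obtains the boundary $C^{2+\alpha}$ estimate from Theorem 5.5.2 of \cite{Kr87} (through the approximation scheme of Remark \ref{remark 7.14.1}, ``as in Lemma \ref{3.7.3}'') and then concludes with the maximum principle. The only technical difference is that the paper sidesteps the corner/solvability issue you flag by applying that theorem in a smooth spatial domain $\cD_1$ with $B^+_{3/4}\subset\cD_1\subset B^+_1$ rather than working in $Q^+_1$ itself.
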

\begin{proof}
Dilations show that it suffices to prove the inequality for $\kappa r=1$.
 We take a smooth domain $\cD_1$ such
that $B_{3/4}^+\subset \cD_1\subset B_{1}^+$.
As in Lemma \ref{3.7.3}, it then follows
from Theorem 5.5.2 in \cite{Kr87} that
$$
[D^2 v]_{ C^\alpha (Q^+_{1/2})} \leq
N\sup_{(0,3/4)\times \cD_1} |v|.
$$
Owing to  the maximum principle, the lemma is proved.
\end{proof}

The next lemma is a consequence of Lemma \ref{plin} and
can be proved in the same way as Lemma \ref{3.6.5} is proved.
\begin{lemma}
                       \label{plin2}
Let $r\in(0,\infty)$ and let a function
$u\in C(\bar{Q}^{+}_{r})\cap
W^{1,2}_{d+1}(Q^+_{\rho})$ for any $\rho\in(0,r)$ and satisfy
$u=0$ on $\partial' Q^+_{r}$.
 Then there are constants
$\gamma\in(0, 1]$ and $N$, depending only on $\delta$ and $d$,  such that  for
any $L\in \bL_\delta$ we have
$$
\dashint_{Q^+_{r}} |D^2u|^\gamma \, dx \, dt
\leq N \left(\dashint_{Q^+_{r}}|\partial_t u+ Lu|^{d+1} \, dx \, dt\right)^{\gamma/{(d+1)}}.
$$
\end{lemma}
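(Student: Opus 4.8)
The plan is to reduce Lemma \ref{plin2} to Lemma \ref{plin} by odd reflection across the hyperplane $\{x^{1}=0\}$, following verbatim the scheme of the proof of Lemma \ref{3.6.5} (with Lemma \ref{plin} now playing the role of Lemma \ref{lemma 8.11.1}). As in that proof, by passing to the limit $\rho\uparrow r$ at the end I may assume $u\in W^{1,2}_{d+1}(Q^{+}_{r})$. Put $f=\partial_{t}u+Lu$, write $L=a^{ij}D_{ij}$, and let $\tilde u$ and $\tilde f$ denote the odd extensions in $x^{1}$ of $u$ and $f$ to the full cylinder $Q_{r}$. Define the reflected operator $\tilde L=\tilde a^{ij}D_{ij}$ by
$$
\tilde a^{ij}(t,x)=\text{sgn}(x^{1})\,a^{ij}(t,|x^{1}|,x')\quad\text{for }i=1,\ j\ge2\ \text{or}\ j=1,\ i\ge2,
$$
$$
\tilde a^{ij}(t,x)=a^{ij}(t,|x^{1}|,x')\quad\text{otherwise}.
$$
The sign flips on the mixed entries are exactly what is needed so that $\tilde L\in\bL_{\delta}$ (the eigenvalue bounds are invariant under $\xi^{1}\mapsto-\xi^{1}$) and so that $\partial_{t}\tilde u+\tilde L\tilde u=\tilde f$ holds in all of $Q_{r}$: this is the original equation for $x^{1}>0$, its reflection for $x^{1}<0$, and the two pieces agree across $\{x^{1}=0\}$ precisely because $u$ vanishes there.

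Next I would record the two facts that make Lemma \ref{plin} applicable with no boundary term. First, $\tilde u\in C(\bar Q_{r})\cap W^{1,2}_{d+1}(Q_{\rho})$ for every $\rho<r$: continuity is clear from $u\in C(\bar Q^{+}_{r})$ and $u|_{\{x^{1}=0\}}=0$, while the $W^{1,2}_{d+1}$-regularity of the odd extension is the parabolic analogue of the reflection argument used for Lemma \ref{3.6.2} (cf. Lemma 8.2.1 of \cite{Kr08}); it uses again that $u$ --- hence all of its purely tangential first and second spatial derivatives as well as $\partial_{t}u$ --- vanishes on $\{x^{1}=0\}$. Second, and this is the point where the hypothesis is crucial, the parabolic boundary $\partial'Q^{+}_{r}$ contains the flat face $\{x^{1}=0\}$, so $u=0$ on $\partial'Q^{+}_{r}$ forces $\tilde u=0$ on the full parabolic boundary $\partial'Q_{r}$. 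Hence Lemma \ref{plin} applied to $\tilde u$ and $\tilde L$ gives
$$
\dashint_{Q_{r}}|D^{2}\tilde u|^{\gamma}\,dx\,dt\le N\Big(\dashint_{Q_{r}}|\tilde f|^{d+1}\,dx\,dt\Big)^{\gamma/(d+1)}.
$$
Since $|D^{2}\tilde u|$ and $|\tilde f|$ are the even extensions in $x^{1}$ of $|D^{2}u|$ and $|f|$ and $|Q_{r}|=2|Q^{+}_{r}|$, each average over $Q_{r}$ equals the corresponding average over $Q^{+}_{r}$, and recalling $f=\partial_{t}u+Lu$ yields the claimed estimate.

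The only step that is not pure bookkeeping is the $W^{1,2}_{d+1}$-regularity of the odd extension together with the validity of the reflected equation across $\{x^{1}=0\}$; this is where the hypothesis that $u$ vanishes on the flat part of $\partial'Q^{+}_{r}$ is indispensable, and where one invokes the parabolic reflection lemma. Everything else --- checking $\tilde L\in\bL_{\delta}$, tracking the parities of the $\tilde a^{ij}$ and of the second derivatives of $\tilde u$, and converting averages over $Q_{r}$ into averages over $Q^{+}_{r}$ --- is routine.
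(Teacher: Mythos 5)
Your proposal is correct and coincides with the paper's intended argument: the paper proves Lemma \ref{plin2} precisely by remarking that it follows from Lemma \ref{plin} "in the same way as Lemma \ref{3.6.5} is proved," i.e.\ by the odd reflection of $u$, $f$ and the coefficients across $\{x^{1}=0\}$ that you carry out, with the boundary term in Lemma \ref{plin} vanishing because $\tilde u=0$ on $\partial'Q_{r}$. Nothing essential is missing.
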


Everywhere below in this section
$\alpha$ is the smallest of the constants
called $\alpha$ in Lemmas \ref{3.7.3} and \ref{3.21.2}
and $\gamma$ is the smallest of the ones from Lemmas \ref{plin}
and \ref{plin2}.
\begin{lemma}
                        \label{lemma 3.21.3}
Let $ r\in(0,\infty)$, $\kappa \geq 16$, and
$x_0^1\ge 0$. Let
$u\in W^{1,2}_{d+1}(Q^+_{\kappa r}(x^1_0))$ be a solution to
\eqref{para0} in $ Q^+_{\kappa r}(x^1_0)$
vanishing on $Q^+_{\kappa r}(x^1_0)\cap \partial \bR^{d+1}$. Then
$$
\dashint_{Q^+_r(x^1_{0})}\dashint_{Q^+_r(x_0^{1})}
|D^2u(t,x)-D^2u(s,y)|^{\gamma}\, dx \,dt \,dy \, ds
$$
$$
 \leq N\kappa^{d+2} \left(\dashint_{Q^+_{\kappa r}(x_0^1)}
|f|^{d+1} \,dx\,dt\right)^{\gamma/(d+1)}
$$
\begin{equation}
                                \label{equation 7.14.1}
+N\kappa^{-\alpha \gamma}
\left(\dashint_{Q^+_{\kappa r}(x_0^1)}
 |D^2 u|^{d+1} \,dx\,dt\right)^{\gamma/(d+1)},
\end{equation}
where the constant $N$ depends only on $d$ and  $\delta$.
\end{lemma}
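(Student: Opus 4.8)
The plan is to follow the now-familiar two-case scheme from the proof of Lemma \ref{osc}, transplanted to the parabolic half-space setting and using the parabolic boundary tools already established in this section. By dilations it suffices to treat the case $\kappa r = 8$. When $x_0^1 > 1$ we have $Q^+_{\kappa r/8}(x_0^1) = Q_{r\kappa/8}(x_0^1) \subset \bR^{d+1}_+$, so the interior result Lemma \ref{3.8.1} applies directly (note $\kappa/8 \ge 2$), giving \eqref{equation 7.14.1} at once after controlling $|\partial_t u|$ via $|\partial_t u| \le |f| + N|D^2u|$ as in \eqref{eq16.29}. The genuine work is in the boundary case $x_0^1 \in [0,1]$, where $r = 8/\kappa \le 1/2$ forces the chain of inclusions $Q^+_r(x_0^1) \subset Q^+_2 \subset Q^+_4 \subset Q^+_{\kappa r}(x_0^1)$.

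In the boundary case I would first reduce by density to $u \in C^{1,2}_b(\bar Q^+_{\kappa r}(x_0^1))$, then set $\hat u(t,x) := u(t,x) - x^1 (D_1 u)_{Q^+_4}$ and let $v$ be the classical solution of \eqref{para0} in $Q^+_4$ with $f \equiv 0$ and boundary data $v = \hat u$ on $\partial' Q^+_4$; existence follows from the cited $C^{2+\alpha}$ theory (Theorem 6.4.1 of \cite{Kr87}), and $v = 0$ on $x^1 = 0$ since $\hat u$ does. Applying Lemma \ref{3.21.2} together with Lemma \ref{3.21.1} (the analogue of how Lemmas \ref{3.6.3} and \ref{3.6.2} were combined), then H\"older to pass from exponent $1$ to exponent $\gamma \in (0,1]$, yields
$$
\dashint_{Q^+_r(x^1_0)}\dashint_{Q^+_r(x^1_0)}|D^2 v(t,x) - D^2 v(s,y)|^\gamma \, dx\,dt\,dy\,ds \le N\kappa^{-\gamma\alpha}\left(\dashint_{Q^+_{\kappa r}(x_0^1)}\bigl(|D^2u|^{d+1}+|\partial_t u|^{d+1}\bigr)\,dx\,dt\right)^{\gamma/(d+1)},
$$
and then \eqref{eq16.29} absorbs the $|\partial_t u|$ term into $|f|$ and $|D^2u|$.

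Next set $w := \hat u - v$ in $Q^+_4$. Since condition $({\rm H}_1)$ lets us write $F(D^2\hat u) - F(D^2 v) = a^{ij}[D_{ij}\hat u - D_{ij} v]$ for some $L = a^{ij}D_{ij} \in \bL_\delta$, and $\partial_t \hat u + F(D^2 \hat u) = f$ (the term $x^1(D_1u)_{Q^+_4}$ is affine, so it is annihilated by $D^2$ and $\partial_t$), we get $\partial_t w + Lw = f$ in $Q^+_4$ with $w = 0$ on $\partial' Q^+_4$ and $w \in W^{1,2}_{d+1}(Q^+_\rho) \cap C(\bar Q^+_4)$ for $\rho < 4$. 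Lemma \ref{plin2} then gives $\dashint_{Q^+_r(x_0^1)}|D^2 w|^\gamma \le N\kappa^{d+2}\dashint_{Q^+_4}|D^2 w|^\gamma \le N\kappa^{d+2}(\dashint_{Q^+_{\kappa r}(x_0^1)}|f|^{d+1})^{\gamma/(d+1)}$, and the same bound for the double oscillation integral of $D^2 w$. Combining the estimates for $v$ and $w$ via $D^2 u = D^2 v + D^2 w$ and the triangle inequality for the $\gamma$-th power (with $\gamma \le 1$) produces \eqref{equation 7.14.1}.

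The main obstacle is the bookkeeping at the boundary: one must check that the odd-extension machinery underlying Lemmas \ref{3.21.1} and \ref{plin2} is genuinely available here (i.e.\ that $w$ vanishes on the full parabolic boundary $\partial' Q^+_4$, not merely on the flat part $x^1 = 0$), and that the classical solvability result for $v$ delivers enough regularity up to the corner where the flat boundary meets the spherical cap. Both points are handled exactly as in the elliptic Lemma \ref{osc} and in Lemma \ref{3.8.1}, so no new difficulty arises beyond careful transcription; the estimate \eqref{eq16.29} is what keeps the $\partial_t u$ contributions from proliferating.
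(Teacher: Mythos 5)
Your plan coincides with the paper's proof: the same dilation to $\kappa r=8$, the same two cases, the same decomposition $\hat u=u-x^1(D_1u)_{Q_4^+}$, $v$ solving the homogeneous equation in $Q_4^+$ and $w=\hat u-v$ satisfying $\partial_t w+Lw=f$ for some $L\in\bL_\delta$, estimated via Lemmas \ref{3.21.1}, \ref{3.21.2}, \ref{plin2} and \eqref{eq16.29}. The only spot where the paper does more than your appeal to Theorem 6.4.1 of \cite{Kr87} is the construction of $v$ in the non-smooth half-cylinder $Q_4^+$ (the edge where $\{x^1=0\}$ meets the lateral sphere): there the paper exhausts $B_4^+$ by smooth domains $\cD_n$, solves in $(0,16)\times\cD_n$, uses uniform local $C^{1+\beta/2,2+\beta}$ bounds from Theorem 5.5.2 of \cite{Kr87} and passes to the limit, obtaining boundary continuity via Theorem 6.3.1 of \cite{Kr87} -- exactly the technical point you flagged, which needs this short approximation argument rather than a direct citation.
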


\begin{proof} As in the proof of Lemma \ref{osc}
due to
dilations, we
only need to consider the case $\kappa r=8$. Again, we consider the following two cases.

{\em Case 1: $x_0^1>1$.}  In this case, we have
$Q_{r\kappa /8}(x^1_0)\subset\bR^{d+1}_{+}$ and inequality
\eqref{equation 7.14.1} is an immediate consequence of
 Lemma \ref{3.8.1} since $\kappa/8\geq 2$.

{\em Case 2: $x_0^1\in[0,1].$} Since $r=8/\kappa\le 1/2$,
we have
$$
Q_r^+(x_0^1)\subset  Q_{ 3/2 }^+  \subset Q_{4}^+
\subset Q^+_{\kappa r}(x_0^1).
$$
By using a standard
approximating argument, we may
assume that $u\in C^\infty_b(\bar Q^{+}_{\kappa r}(x^1_0))$.
 Define $\hat{u}:= u-x^1(D_1u)_{Q_{4}^+}$.
 We claim that there exists
a function $v $ such that

(i) $v\in C(\bar{Q}_{4})$, $v=\hat{u}$ on $\partial' Q^+_{4}$;

(ii) $v\in C^{1,2}_{b}(\bar{Q}^{+}_{\rho})$ for any $\rho<4$;

(iii) $v$ satisfies \eqref{para0} in $Q^+_{4}$  with $f\equiv0$.

The proof of this claim is obtained as follows.
First we take smooth
domains $\cD_n$ such that
 $B_{4-1/n}^+\subset \cD_n\subset B_{4}^+$ set
$\cQ_{n}=(0,16)\times\cD_{n}$ and
by applying Theorem 6.4.1 of \cite{Kr87} find
unique $v_{n}\in C^{1,2}_{b}(\cQ_{n})\cap C(\bar{\cQ}_{n})$
satisfying  \eqref{para0} with $f\equiv0$, and boundary condition
$v_{n}=\hat{u}$ on $\partial'\cQ_{n}$. Then
one routinely derives from Theorem 5.5.2 in  \cite{Kr87}
(cf. Remark \ref{remark 7.14.1}) that
there exists $\beta\in(0,1)$ such that for any
$\rho<4$ the $C^{1+\beta/2,2+\beta}(Q^{+}_{\rho})$
norms of $v_{n}$ are bounded for all large $n$.
After that one takes a subsequence of $v_{n}$,
if necessary, and finds a function $v$ possessing
the above properties (ii) and (iii). That $v$ also
satisfies (i) is proved in the same way as a similar
statement is proved in Theorem 6.3.1 of \cite{Kr87}.

Now Lemmas \ref{3.21.2} and \ref{3.21.1}  and the maximum principle
easily yield that
$$
\dashint_{Q^+_r(x^1_0)}
\dashint_{Q^+_r(x^1_0)}|D^2v(t,x)-D^2v(s,y)|
\,dx\,dt \,dy \,ds
$$
 $$
\leq Nr^\alpha [D^2 v]_{ C^\alpha(Q_{ 3/2 }^+)}
\le N r^\alpha \sup_{\partial' Q^+_{4}} |v|
$$
$$
\leq N \kappa^{-\alpha}
\left(\dashint_{Q_4^+}(|D^2u|+|\partial_{t}u |)^{d+1}
\,dx\,dt\right)^{1/(d+1)}.
$$
Recall that $\gamma\in (0,1]$. By H\"older's inequality,
$$
\dashint_{Q^+_r(x^1_0)}\dashint_{Q^+_r(x^1_0)}
|D^2v(t,x)-D^2v(s,y)|^\gamma\,dx\,dt \,dy \,ds
$$
\begin{equation}
                             \label{equation 7.14.2}
\leq N \kappa^{-\alpha\gamma}\left(
\dashint_{Q_{\kappa r}^+(x_0^1)}(|D^2u|+
|\partial_{t}u |)^{d+1}\,dx\,dt\right)^{ \gamma /(d+1)}.
\end{equation}

Next for $w:=\hat{u}-v$ in $Q_{4}^+$, we have
 $w\in
W^{1,2}_{d+1}(Q^+_{\rho})
 $ for any $\rho<4$. By the
 same argument as in the proof of Lemma \ref{osc},  we know
that there exists  an operator $L\in
\bL_\delta$ such that $\partial_{t}w
+Lw=f$ in $Q_{4}^+$. By Lemma \ref{plin2} and the fact
 that $\kappa r=8$, we get
$$
\dashint_{Q_r^+(x_0^1)} |D^2w|^\gamma \,dx\, dt\leq
N\kappa^{d+2} \dashint_{Q_{4}^+}  |D^2w|^{\gamma}\, dx \, dt
$$
$$
\leq N\kappa^{d+2} \left(\dashint_{Q_{4}^+ }
|f|^{d+1}\,dx\, dt\right)^{\gamma/{(d+1)}}
$$
$$
\leq N\kappa^{d+2} \left(\dashint_{Q_{\kappa r}^+(x_0^1)}
|f|^{d+1}\,dx\, dt\right)^{\gamma/{(d+1)}}
$$
and
$$
\dashint_{Q_r^+(x_0^1)}\dashint_{Q_r^+(x_0^1)}
|D^2w(t,x) -D^2 w(s,y)|^\gamma \, dx\, dt \, dy\, ds
$$
$$
 \leq N\kappa^{d+2}\left(\dashint_{Q^+_{\kappa r}(x_0^1)}
 |f|^{d+1}\,dx\, dt\right)^{\gamma/{d+1}}.
$$
Upon  combining this inequality with \eqref{equation 7.14.2},
  observing that $D^2 u=D^2 v+D^2 w$, and
using \eqref{eq16.29} we get
 \eqref{equation 7.14.1}. The
lemma is proved.
\end{proof}

As in the proof  of Theorem \ref{3.22.1},
one derives the following theorem
from Lemma \ref{lemma 3.21.3}, the
Hardy--Littlewood theorem, and
Theorem 5.3 of \cite{Kr10}, which we apply to the
filtration of dyadic parabolic cubes belonging to $\bR^{d+1}_{+}$.
Denote by $\WO^{1,2}_{p}(\bR^{d+1}_+)$ the set of functions
from $W^{1,2}_{p}(\bR^{d+1}_+)$ with zero trace at $x^{1}=0$.
\begin{theorem}
                               \label{3.21.5}
Let $p>{d+1}$.
(i)
If $u\in \WO^{1,2}_{p}(\bR^{d+1}_+)$
satisfies \eqref{para0} in $\bR^{d+1}_+$,
then
$$
\|D^2u\|_{L_p(\bR_+^{d+1})}
+\|\partial_{t}u \|_{L_p(\bR_+^{d+1})} \leq N \|f\|_{L_p(\bR_+^{d+1})},
$$
where $N$ depends only on $d$, $\delta$, and $p$.

(ii) For any $f\in L_p(\bR_+^{d+1})$ and $\lambda>0$,
there exists a unique solution $u\in \WO^{1,2}_p(\bR^{d+1}_+)$ of
the equation
$$
\partial_t u(t,x) + F(D^2u(t,x))-\lambda u (t,x) = f(t, x).
$$
Furthermore,
\begin{equation*}
\lambda\|u\|_{L_p(\bR_+^{d+1})} +\|D^2u\|_{L_p(\bR_+^{d+1})}
+\|\partial_{t}u \|_{L_p(\bR_+^{d+1})}\leq N \|f\|_{L_p(\bR_+^{d+1})},
\end{equation*}
 where $N$ depends only on $d$, $\delta$, and $p$.
\end{theorem}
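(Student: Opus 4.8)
The plan is to follow the proof of Theorem \ref{3.22.1} step by step, replacing the elliptic half-space oscillation estimate (Lemma \ref{osc}) by its parabolic half-space counterpart, Lemma \ref{lemma 3.21.3}, and the spatial dyadic filtration by the natural filtration of parabolic dyadic boxes sitting inside $\bR^{d+1}_+$ (boxes of the form $(j2^{-2n},(j+1)2^{-2n}]\times C'$ with $C'$ a spatial dyadic cube of side $2^{-n}$ whose first-coordinate interval lies in $[0,\infty)$). For a function $g$ on $\bR^{d+1}_+$ one introduces $g^{\#}_{\gamma}(t,x)$ as the supremum over such boxes $C\ni(t,x)$ of $\big(\dashint_{C}\dashint_{C}|g(\cdot)-g(\cdot)|^{\gamma}\big)^{1/\gamma}$, together with the parabolic maximal function $\bM$. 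Exactly as in the paragraph preceding Corollary \ref{corollary 7.13.1}, every box $C\ni(t,x)$ is contained in some $Q^{+}_{r}(t,x)$ of comparable volume and $\dashint_{Q^{+}_{r}(t,x)}|g|\le 2\bM(gI_{\bR^{d+1}_+})(t,x)$; combining this with Lemma \ref{lemma 3.21.3} (which applies on every subcylinder because $u\in\WO^{1,2}_p(\bR^{d+1}_+)$ vanishes on all of $\{x^{1}=0\}$) and translating in $(t,x')$, one obtains for any $\kappa\ge 16$
$$
(D^{2}u)^{\#}_{\gamma}(t,x)\le N\kappa^{(d+2)/\gamma}\bM^{1/(d+1)}\big(|f|^{d+1}I_{\bR^{d+1}_+}\big)(t,x)+N\kappa^{-\alpha}\bM^{1/(d+1)}\big(|D^{2}u|^{d+1}I_{\bR^{d+1}_+}\big)(t,x).
$$

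Next I would apply the parabolic Fefferman--Stein inequality (Theorem 5.3 of \cite{Kr10}, used here for the parabolic dyadic filtration of $\bR^{d+1}_+$) and the Hardy--Littlewood maximal function theorem, valid since $p>d+1$ makes $p/(d+1)>1$, to get
$$
\|D^{2}u\|_{L_p(\bR^{d+1}_+)}\le N\kappa^{(d+2)/\gamma}\|f\|_{L_p(\bR^{d+1}_+)}+N\kappa^{-\alpha}\|D^{2}u\|_{L_p(\bR^{d+1}_+)}
$$
with $N=N(d,\delta,p)$. Since $u\in\WO^{1,2}_p(\bR^{d+1}_+)$ the left-hand side is finite, so choosing $\kappa\ge 16$ with $N\kappa^{-\alpha}\le 1/2$ and absorbing yields $\|D^{2}u\|_{L_p}\le N\|f\|_{L_p}$; then $\|\partial_{t}u\|_{L_p}\le N\|f\|_{L_p}$ by \eqref{eq16.29}. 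This proves (i). For the a priori estimate in (ii), replacing $f$ by $f+\lambda u$ in (i) gives $\|D^{2}u\|_{L_p}+\|\partial_{t}u\|_{L_p}\le N(\|f\|_{L_p}+\lambda\|u\|_{L_p})$, so it suffices to show $\lambda\|u\|_{L_p}\le N\|f\|_{L_p}$. Since $F$ is convex, positive homogeneous of degree one with $F(0)=0$, condition $({\rm H}_{1})$ furnishes an operator $L\in\bL_{\delta}$ with $F(D^{2}u)=Lu$; hence $u\in\WO^{1,2}_p(\bR^{d+1}_+)$ solves the linear equation $\partial_{t}u+Lu-\lambda u=f$, and the bound follows from linear parabolic theory exactly as in the proof of Theorem \ref{3.22.1}(ii) and of Theorem \ref{thm4.7}(ii), via the sup-estimates of \cite{Kr87}.

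Finally, solvability is obtained as in Theorem \ref{3.22.1}(iii): for $f\in C^{\infty}_{0}(\bR^{d+1}_+)$ one uses the classical $C^{1+\alpha/2,2+\alpha}$ solvability of $\partial_{t}u+F(D^{2}u)-\lambda u=f$ with zero lateral data (e.g.\ Theorem 6.4.1 of \cite{Kr87}, cf.\ Remark \ref{remark 7.14.1}), observes via barriers that $u$ decays exponentially, and runs the standard localization argument (the analogue of \eqref{7.5.6}, using Lemma \ref{plin2} and interpolation of $Du$ through $D^{2}u$ and $u$) to place $u$ in $\WO^{1,2}_p(\bR^{d+1}_+)$ so that the a priori estimate applies; then a general $f\in L_p(\bR^{d+1}_+)$ is approximated by such $f_{n}$, a subsequence of the uniformly $\WO^{1,2}_p$-bounded solutions $u_{n}$ converges uniformly on compact subsets of $\bar\bR^{d+1}_+$ to some $u\in\WO^{1,2}_p(\bR^{d+1}_+)$, and one passes to the limit in the equation using the stability theorems of \cite{Kr87}; uniqueness follows from $F(D^{2}u)-F(D^{2}v)=L(u-v)$ with $L\in\bL_{\delta}$ and the parabolic maximum principle. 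The only point requiring care is the first paragraph: one must verify that the parabolic dyadic boxes of $\bR^{d+1}_+$ form an admissible filtration for Theorem 5.3 of \cite{Kr10} and are comparable (in the doubling sense) to the cylinders $Q^{+}_{r}$ on which Lemma \ref{lemma 3.21.3} is stated — everything after that is a faithful transcription of the elliptic argument.
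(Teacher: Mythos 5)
Your proposal is correct and follows essentially the same route as the paper, which derives Theorem \ref{3.21.5} exactly as Theorem \ref{3.22.1}, from Lemma \ref{lemma 3.21.3}, the Hardy--Littlewood theorem, and Theorem 5.3 of \cite{Kr10} applied to the filtration of dyadic parabolic cubes in $\bR^{d+1}_+$, with the lower-order and solvability steps handled as in Theorems \ref{3.22.1} and \ref{thm4.7}. Your write-up is simply a fleshed-out version of the paper's one-sentence indication, with no substantive deviation.
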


\mysection{Parabolic equations in $\bR^{d+1}_+$ with VMO coefficients}
                                        \label{sec7}

In this section, we consider parabolic equations
in $\bR^{d+1}_+$
 with variable coefficients
\begin{equation}
                            \label{vpara2}
\partial_t u(t,x)+F(D^2 u(t,x),t,x) -\lambda u(t,x)=f(t,x).
\end{equation}
In the sequel, Assumption \ref{assump1} is supposed to hold
with $\cD=\bR^{d+1}_+$,
and the constants $\alpha$ and $\gamma$ in Lemma \ref{pposc2}
are taken from Section~\ref{sec6}.
Recall that $\theta(\mu,d ,\delta )$
is introduced in Remark \ref{remark 7.29.1}.

\begin{lemma}
                    \label{pposc2}
Let $\beta\in(1,\infty)$, $\lambda=0$, $\mu,r>0$, $\kappa\ge
16$,
 $x_0^1\ge 0$, and $ (\tau,z)\in
\bR^{d+1}_+$. Suppose that $\theta=
\theta(\mu,d ,\delta )$. Let $u\in
\WO^{1,2}_{d+1}(\bR_+^{d+1})$ be a solution of \eqref{vpara2}
vanishing
outside $Q^+_{R_0}(\tau,z)$. Then
\begin{equation*}
\dashint_{Q_r^+(x_0^1)}\dashint_{Q_r^+(x_0^1)} |D^2u(t,x)
-D^2u(s,y)|^\gamma \,dx\, dt \,dy\,ds
\end{equation*}
$$
\leq N\kappa^{d+2}\left(\dashint_{Q^+_{\kappa r}(x_0^1)}
|f| ^d\,dx\,dt\right)^{\gamma/{(d+1)}}
$$
$$
+N\kappa^{d+2}\left(\dashint_{Q^+_{\kappa r}(x_0^1)}
| D^2u|^{\beta(d+1)}\,dx\,dt
\right)^ {\gamma/(\beta d+\beta)}
\mu^ {\gamma/(\beta' d+\beta')}
$$
\begin{equation}
                         \label{ppposc2}
+N\kappa^{-\alpha\gamma}\left(\dashint_{Q^+_{\kappa r}(x_0^1)}
 |D^2 u|^{d+1} \,dx\,dt\right)^{\gamma/(d+1)},
\end{equation}
where $N=N(\delta,d ,\beta )$ and $\beta'=\beta/(\beta-1)$.
\end{lemma}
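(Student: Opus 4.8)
The plan is to transcribe the proof of Lemma \ref{pposc} almost verbatim, replacing the interior parabolic oscillation estimate (Lemma \ref{3.8.1}) by its flat-boundary counterpart (Lemma \ref{lemma 3.21.3}) and working throughout with half-cylinders $Q_r^+(x_0^1)$. First I would fix the comparison function: set $\bar F(u'')=(F)_{Q^+_{R_0}(\tau,z)}(u'')$ when $\kappa r\ge R_0$ and $\bar F(u'')=(F)_{Q^+_{\kappa r}(x_0^1)}(u'')$ otherwise; by Remark \ref{remark 7.30.1b} this $\bar F$ still satisfies $({\rm H}_1)$. Since $F$ is positive homogeneous of degree one, $u$ solves the constant-coefficient equation $\partial_t u+\bar F(D^2u)=\tilde f$ in $Q^+_{\kappa r}(x_0^1)$, where $\tilde f=f+\bar F(D^2u)-F(D^2u,t,x)$, and $u$ continues to vanish on $Q^+_{\kappa r}(x_0^1)\cap\partial\bR^{d+1}$. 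Applying Lemma \ref{lemma 3.21.3} to $u$ and $\tilde f$ bounds the double oscillation integral of $D^2u$ over $Q^+_r(x_0^1)$ by $N\kappa^{d+2}\big(\dashint_{Q^+_{\kappa r}(x_0^1)}|\tilde f|^{d+1}\big)^{\gamma/(d+1)}+N\kappa^{-\alpha\gamma}\big(\dashint_{Q^+_{\kappa r}(x_0^1)}|D^2u|^{d+1}\big)^{\gamma/(d+1)}$.

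Next I would estimate the VMO error. By the triangle inequality $\dashint_{Q^+_{\kappa r}(x_0^1)}|\tilde f|^{d+1}\le N\dashint_{Q^+_{\kappa r}(x_0^1)}|f|^{d+1}+N\dashint_{Q^+_{\kappa r}(x_0^1)}|\bar F(D^2u)-F(D^2u,t,x)|^{d+1}$. Writing $h(t,x)=\sup_{|u''|=1}|F(u'',t,x)-\bar F(u'')|$, positive homogeneity gives the pointwise bound $|\bar F(D^2u)-F(D^2u,t,x)|\le h(t,x)|D^2u|$, and since $u$ vanishes outside $Q^+_{R_0}(\tau,z)$ I may insert the factor $I_{Q^+_{R_0}(\tau,z)}$ before applying H\"older's inequality with exponents $\beta,\beta'$:
$$
\dashint_{Q^+_{\kappa r}(x_0^1)}h^{d+1}|D^2u|^{d+1}I_{Q^+_{R_0}(\tau,z)}\,dx\,dt\le J_1^{1/\beta}J_2^{1/\beta'},
$$
with $J_1=\dashint_{Q^+_{\kappa r}(x_0^1)}|D^2u|^{\beta(d+1)}\,dx\,dt$ and $J_2=\dashint_{Q^+_{\kappa r}(x_0^1)}h^{\beta'(d+1)}I_{Q^+_{R_0}(\tau,z)}\,dx\,dt$.

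The heart of the argument is the bound $J_2\le N\mu$. Because $h$ is bounded by a constant depending only on $d,\delta$ (Remark \ref{remark 7.29.1}), $h^{\beta'(d+1)}\le Nh$, so $J_2\le N\dashint_{Q^+_{\kappa r}(x_0^1)}h\,I_{Q^+_{R_0}(\tau,z)}\,dx\,dt$. If $\kappa r<R_0$ this is at most $N\dashint_{Q^+_{\kappa r}(x_0^1)}h\,dx\,dt$, which is $\le N\mu$ by Assumption \ref{assump1} with $\theta=\theta(\mu,d,\delta)$, after passing from the half-cylinder to the full cylinder $Q_{\kappa r}(x_0^1)$ at the cost of a dimensional constant (the parabolic analogue of \eqref{7.13.1}). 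If $\kappa r\ge R_0$, then $J_2\le N(\kappa r)^{-(d+2)}\int_{Q^+_{R_0}(\tau,z)}h\,dx\,dt\le N(\kappa r)^{-(d+2)}R_0^{d+2}\dashint_{Q^+_{R_0}(\tau,z)}h\,dx\,dt\le N\mu$, again using the VMO hypothesis and $\kappa r\ge R_0$. Combining these gives $\dashint_{Q^+_{\kappa r}(x_0^1)}|\bar F(D^2u)-F(D^2u,t,x)|^{d+1}\le N\big(\dashint_{Q^+_{\kappa r}(x_0^1)}|D^2u|^{\beta(d+1)}\big)^{1/\beta}\mu^{1/\beta'}$; substituting into the estimate from the first paragraph and using $\gamma\le 1$ together with the subadditivity of $s\mapsto s^{\gamma/(d+1)}$ yields \eqref{ppposc2}.

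I expect the only genuinely delicate point to be the two-case treatment of $J_2$ at the flat boundary: one must check that the VMO condition of Assumption \ref{assump1}, stated on full parabolic cylinders $Q_r(\tau,z)$, controls averages of $h$ over half-cylinders $Q^+_r$ with only a dimensional loss, and that the comparison $\bar F$ — itself defined by averaging over half-cylinders — is admissible. This is handled by the reflection/comparability inequality \eqref{7.13.1} (parabolic version) and by Remark \ref{remark 7.30.1b}; everything else is a routine adaptation of the proof of Lemma \ref{pposc}.
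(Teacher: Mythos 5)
Your proposal is correct and follows essentially the same route as the paper's proof: the same two-case choice of $\bar F$ by averaging over $Q^+_{R_0}(\tau,z)$ or $Q^+_{\kappa r}(x_0^1)$, the rewrite with $\tilde f$, Lemma \ref{lemma 3.21.3}, and the H\"older splitting $J\le J_1^{1/\beta}J_2^{1/\beta'}$ with $J_2\le N\mu$ handled in the cases $\kappa r<R_0$ and $\kappa r\ge R_0$ via Remark \ref{remark 7.29.1}. Your extra remark on passing from half-cylinders to full cylinders is a harmless elaboration of what the paper leaves implicit.
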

\begin{proof}
 Introduce
$$
\bar{F}(u'')=\left\{
               \begin{array}{ll}
                 (F)_{Q^{+}_{R_0}(\tau,z)}(u''), & \hbox{if $\kappa r\geq R_0$;} \\
                 (F)_{Q^{+}_{\kappa r}(x_{0}^{1})}(u''), & \hbox{otherwise,}
               \end{array}
             \right.
$$
and
$$
h(t,x)=\sup_{u''\in \cS: |u''|=1}|F(u'',t,x)-\bar F(u'')|.
$$
Note that
$$
\partial_t u(t,x)+\bar F(D^2 u) =
\tilde f,
$$
where
$$
\tilde{f}(t,x)= f(t,x)+\bar F(D^2 u)-F(D^2 u,t,x).
$$
By Lemma \ref{lemma 3.21.3} and the triangle inequality,
$$
\dashint_{Q_r^+(x_0^1)}\dashint_{Q_r^+(x_0^1)} |D^2u(t,x)-D^2u(s,y)|^\gamma \,dx\, dt \,dy\,ds
$$
$$
\le N\kappa^{d+2}\left(\dashint_{Q^+_{\kappa r}(x_0^1)}|f|^{d+1} \,dx\,dt\right)^{\gamma/(d+1)}
+N\kappa^{d+2}J^{\gamma/(d+1)}
$$
\begin{equation}
               \label{3.21.9}
+ N\kappa^{-\alpha \gamma} \left(
\dashint_{Q^+_{\kappa r}(x_0^1)}
 |D^2 u|^{d+1} \,dx\,dt\right)^{\gamma/(d+1)},
\end{equation}
where $N=N(d,\delta)$,
$$
J=\dashint_{Q^+_{\kappa r}(x_0^1)}  | \bar F(D^2 u)-F(D^2 u,t,x)|^{d+1}
I_{Q^+_{R_0}(\tau,z)} \, dx\, dt
\leq J_1^{1/\beta}J_2^{1/\beta'},
$$
Here
$$
J_1 =\dashint_{Q^+_{\kappa r}(x_0^1)}  |D^2u|^{\beta(d+1)} \,dx\,dt,
$$
$$
J_2=\dashint_{Q^+_{\kappa r}(x_0^1)}  h^{\beta'
(d+1)}I_{Q^+_{R_0}(\tau,z)}\,dx\,dt
\leq N\dashint_{Q^+_{\kappa r}(x_0^1)}  hI_{Q^+_{R_0}(\tau,z)}\,dx\,dt.
$$
If $\kappa r < R_0$, we have
$$
J_2\leq N\dashint_{Q^+_{\kappa r}(x_0^1)} h(t,x) \,dx\,dt \leq N
\mu.
$$
If $\kappa r \geq R_0$, we have
$$
J_2 \le N(\kappa r)^{-d-2}\int_{Q^+_{R_0}(\tau,z)} h(t,x) \,dx\,dt
$$
$$
\le N(\kappa r)^{-d-2}R_0^{d+2}\dashint_{Q^+_{R_0}(\tau,z)} h(t,x) \,dx\,dt
\le N\mu.
$$
Therefore, in any case,
$$
J\leq
 N\left(\dashint_{Q^+_{\kappa r}(x_0^1)}
| D^2u(x)|^{\beta(d+1)}\,dx\,dt\right)^{1/\beta}\mu^{
1/\beta'}.
$$
Substituting the above inequality back into \eqref{3.21.9} yields \eqref{ppposc2}. The lemma is proved.
\end{proof}

The proof of Lemma \ref{pposc2} is
just a rather dull repetition of already given proofs of similar
facts. The following corollary is obtained in the same way as
similar assertions were obtained before.
\begin{corollary}
                         \label{3.16.1.2}
Let $p>d+1$ and $u\in \WO^{1,2}_{d+1}(\bR^{d+1}_+)$ be a solution to \eqref{vpara}
with $\lambda=0$ vanishing outside $Q_{R_0}^+(\tau,z)$, where $(\tau,z)\in \bR^{d+1}_+$. Then
there exist constants $\theta\in (0,1]$ and $N$ depending only on $p, d$, and
$\delta$, such that if Assumption \ref{assump1} is satisfied with this
$\theta$, then
$$
\|D^2 u\|_{L_p}+ \|\partial_{t}u\|_{L_p} \leq N\|f\|_{L_p}.
$$
\end{corollary}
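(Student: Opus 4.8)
The plan is to follow the derivation of Corollary~\ref{corollary 3.6.8} from Lemma~\ref{osca} verbatim, now with Lemma~\ref{pposc2} in place of Lemma~\ref{osca}, the filtration of dyadic parabolic cubes contained in $\bR^{d+1}_+$, and Theorem~5.3 of \cite{Kr10} playing the role of the Fefferman--Stein inequality. Throughout, the a~priori estimate is what is proved; it is applied to (and the general case reduces to, by the usual mollification of $F$ and approximation of $f$) solutions already known to have $D^2u\in L_p$.

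First I would fix $\beta=\beta(d,p)\in(1,\infty)$ by $2\beta=1+p/(d+1)$, so that $\beta>1$ and $\beta(d+1)<p$; with this $\beta$, Lemma~\ref{pposc2} holds with $N=N(d,\delta,p)$. Passing from the double oscillation bound \eqref{ppposc2} to a pointwise bound on the sharp function $(D^2u)^\#_\gamma$ is carried out exactly as in Corollary~\ref{corollary 7.13.1}: for a dyadic parabolic cube $C\subset\bR^{d+1}_+$ containing $(t,x)$ one compares averages over $C$ with averages over a $Q^+_r$ that contains $C$, and bounds the latter by the (parabolic) Hardy--Littlewood maximal function of the relevant power, extended by zero outside $\bR^{d+1}_+$, using the half-space analogue of \eqref{7.13.1}. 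This yields, for all $(t,x)\in\bR^{d+1}_+$ and $\kappa\ge 16$,
$$
(D^2u)^\#_\gamma(t,x)\le N\kappa^{(d+2)/\gamma}\,\bM^{1/(d+1)}\!\big(|f|^{d+1}I_{\bR^{d+1}_+}\big)(t,x)
$$
$$
+N\kappa^{(d+2)/\gamma}\mu^{1/(\beta'(d+1))}\,\bM^{1/(\beta(d+1))}\!\big(|D^2u|^{\beta(d+1)}I_{\bR^{d+1}_+}\big)(t,x)
+N\kappa^{-\alpha}\,\bM^{1/(d+1)}\!\big(|D^2u|^{d+1}I_{\bR^{d+1}_+}\big)(t,x),
$$
with $N=N(d,\delta,p)$ and $\beta'=\beta/(\beta-1)$.

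Next, applying Theorem~5.3 of \cite{Kr10} for the filtration of dyadic parabolic cubes in $\bR^{d+1}_+$ together with the Hardy--Littlewood maximal function theorem---legitimate because $p/(d+1)>1$ and, by the choice of $\beta$, also $p/(\beta(d+1))>1$---I would obtain, just as in the step leading to \eqref{7.13.3},
$$
\|D^2u\|_{L_p}\le N\kappa^{(d+2)/\gamma}\|f\|_{L_p}+N\big(\kappa^{(d+2)/\gamma}\mu^{1/(\beta'(d+1))}+\kappa^{-\alpha}\big)\|D^2u\|_{L_p}.
$$
Now choose first $\kappa=\kappa(d,\delta,p)\ge 16$ so large that $N\kappa^{-\alpha}\le 1/4$, and then $\mu=\mu(d,\delta,p)>0$ so small that $N\kappa^{(d+2)/\gamma}\mu^{1/(\beta'(d+1))}\le 1/4$; by Remark~\ref{remark 7.29.1} this fixes $\theta=\theta(\mu,d,\delta)=\theta(d,\delta,p)$, and under Assumption~\ref{assump1} with this $\theta$ the last term is absorbed into the left-hand side, giving $\|D^2u\|_{L_p}\le N\|f\|_{L_p}$ with $N=N(d,\delta,p)$. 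Finally $\|\partial_tu\|_{L_p}\le N\|f\|_{L_p}$ follows from $|\partial_tu|\le|f|+N|D^2u|$, i.e.\ \eqref{eq16.29}.

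There is no genuine obstacle: the local oscillation estimate, the sharp-function/maximal-function passage, the Fefferman--Stein and Hardy--Littlewood theorems, and the absorption through a small choice of $\theta$ have all already been executed in the elliptic half-space and in the whole-space parabolic settings, and Lemma~\ref{pposc2} is tailored precisely to this situation. The only points needing (routine) attention are the bookkeeping of exponents so that the Hardy--Littlewood theorem applies---hence the choice $\beta(d+1)<p$---and verifying that the averages over $Q^+_r$ of the relevant powers are controlled by maximal functions of their zero-extensions, exactly as in \eqref{7.13.1} and Corollary~\ref{corollary 7.13.1}.
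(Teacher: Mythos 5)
Your proposal is correct and follows essentially the same route the paper indicates: Lemma \ref{pposc2} combined with the dyadic parabolic filtration in $\bR^{d+1}_+$, Theorem 5.3 of \cite{Kr10}, and the Hardy--Littlewood theorem, with $\beta$ chosen so that $\beta(d+1)<p$, then $\kappa$ large and $\mu$ (hence $\theta$) small to absorb the $D^2u$ terms, and \eqref{eq16.29} for $\partial_t u$ --- exactly the scheme already carried out for Corollary \ref{corollary 3.6.8}, Theorem \ref{3.22.1}, and Corollary \ref{3.16.1}, which the paper cites as "obtained in the same way as similar assertions were obtained before."
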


Next we state the main result of this section,
which is deduced from Corollary \ref{3.16.1.2} by modifying
the proof  of Theorem  \ref{theorem 8.5.1}.
 By $\WO^{1,2}_{p}((T,\infty)\times \bR^d_+) $
we denote the set of functions of class
$W^{1,2}_{p}((T,\infty)\times \bR^d_+) $ with zero trace
on $x^{1}=0$.

\begin{theorem}
                                \label{thm7.3}
Let $p > d+1$ and $T\in [-\infty,\infty)$. There
exist constants $\theta=\theta(d,\delta,p)\in (0,1]$,
and $\lambda_0$ depending only on $d$, $p$, $\delta$
and $R_0$, such that if Assumption \ref{assump1}
holds with this $\theta$, then

(i) For any $\lambda\geq\lambda_0$ and $u\in
 \WO^{1,2}_{p}((T,\infty)\times \bR^d_+) $ satisfying
\eqref{vpara2}, we have
$$
\lambda\|u\|_{L_p((T,\infty)\times \bR^d_+)}+\|
\partial_{t}u\|_{L_p((T,\infty)\times \bR^d_+)}+
\|D^2 u\|_{L_p((T,\infty)\times \bR^d_+)}
$$
$$
 \leq N\|f\|_{L_p((T,\infty)\times \bR^d_+)},
$$
where $N=N(d,\delta,p)$.

(ii) For any $\lambda>0$, there exists a constant
$N=N(d,p, \delta ,R_0,\lambda)$ such that for
 any $u\in \WO^{1,2}_p((T,\infty)\times \bR^d_+) $
satisfying \eqref{vpara2}, we have
$$
\|u\|_{W^{1,2}_p((T,\infty)\times \bR^d_+)}
\le N\|f\|_{L_p((T,\infty)\times \bR^d_+)}.
$$

(iii) For any $\lambda> 0$ and  $f\in L_p((T,\infty)\times \bR^d_+)$, there exists a unique solution
$u\in \WO^{1,2}_p((T,\infty)\times \bR^d_+)$ of \eqref{vpara2}.
 \end{theorem}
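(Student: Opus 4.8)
The plan is to mirror the proof of Theorem~\ref{7.13.2}, substituting the half-space ingredients of Section~\ref{sec6} and of Corollary~\ref{3.16.1.2} for their whole-space analogues. As there, one first reduces everything to $T=-\infty$: for $T>-\infty$ one extends $f$ by zero to $\{t\le T\}$, produces a solution on all of $\bR^{d+1}_+$ by the $T=-\infty$ case, and restricts it to $\{t>T\}$ (the $L_p$ norms only decrease), which yields both the estimates and existence; uniqueness on $(T,\infty)\times\bR^d_+$ follows, as always, by writing $F(D^2u,t,x)-F(D^2v,t,x)=L(u-v)$ with $L\in\bL_\delta$ and applying the parabolic Alexandrov estimate for linear equations with measurable coefficients and zero lateral data. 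Also, for $T=-\infty$, assertion (iii) follows from (i) and (ii) by the approximation scheme detailed in the proof of Theorem~\ref{theorem 8.5.1}: mollify $F$ in $x$ so that the mollified operators still satisfy $({\rm H}_{1})$ and Assumption~\ref{assump1} with the same constants, solve the smooth problems classically, obtain uniform $W^{1,2}_p$ bounds from the a priori estimates, extract a subsequence converging locally uniformly, and pass to the limit via Theorems 3.5.15 and 3.5.6 of \cite{Kr87} applied to $\sup_{n\ge n_0}F_n$ and $\inf_{n\ge n_0}F_n$.

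So the heart of the matter is the a priori estimate for $T=-\infty$. Fix $\theta$ as in Corollary~\ref{3.16.1.2}. Following the proof of Theorem~\ref{thm3.4}, I would choose a nonnegative $\zeta\in C^\infty_0$ supported in the ``corner'' $(-R_0^2,0)\times(B_{R_0}\cap\bR^d_+)$ and normalized by $\int\zeta^p=1$. The one-sided location of the spatial support guarantees that, with $u_{(s,y)}(t,x):=u(t,x)\zeta(s-t,y-x)$, one has $u_{(s,y)}\equiv0$ unless $(s,y)\in\bR^{d+1}_+$, in which case $u_{(s,y)}$ has zero trace at $x^1=0$ and is supported in $Q^+_{R_0}(s,y)$; moreover $\int_{\bR^{d+1}_+}\zeta^p(s-t,y-x)\,dy\,ds=1$ whenever $x^1>0$. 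By the homogeneity of $F$, the function $u_{(s,y)}$ solves $\partial_t u_{(s,y)}+F(D^2u_{(s,y)},t,x)=f_{(s,y)}$, where $f_{(s,y)}$ equals $f\,\zeta(s-\cdot,y-\cdot)+\lambda u_{(s,y)}$ plus terms that, by $({\rm H}_{1})$, are pointwise dominated by $N\bigl(|D\zeta|\,|Du|+(|D^2\zeta|+|\partial_t\zeta|)|u|\bigr)$ with the $\zeta$-derivatives evaluated at $(s-\cdot,y-\cdot)$.

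Applying Corollary~\ref{3.16.1.2} to $u_{(s,y)}$, raising to the $p$-th power, integrating in $(s,y)$ over $\bR^{d+1}_+$, and using the normalization of $\zeta$ gives
$$
\|\partial_t u\|_{L_p}^p+\|D^2u\|_{L_p}^p\le N_1\bigl(\|f\|_{L_p}^p+\lambda^p\|u\|_{L_p}^p\bigr)+N_2\bigl(\|Du\|_{L_p}^p+\|u\|_{L_p}^p\bigr),
$$
all norms taken over $\bR^{d+1}_+$, with $N_1=N_1(d,\delta,p)$ and $N_2=N_2(d,\delta,p,R_0)$. To close this, use $({\rm H}_{1})$ to write the original equation in linear form $\partial_t u+a^{ij}D_{ij}u-\lambda u=f$ with $(a^{ij})\in\bL_\delta$, and invoke the argument from the proof of Lemma~3.5.5 of \cite{Kr87} (the parabolic half-space version, as in Theorems~\ref{thm3.4} and~\ref{7.13.2}) to get $\lambda\|u\|_{L_p}\le N\|f\|_{L_p}$ for every $\lambda>0$ with $N=N(d,p,\delta)$. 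Substituting this together with the interpolation inequality $\|Du\|_{L_p}\le\varepsilon\|D^2u\|_{L_p}+N(\varepsilon)\|u\|_{L_p}$ yields assertion (ii) for all $\lambda>0$, and then choosing $\lambda_0=\lambda_0(d,p,\delta,R_0)$ large enough to absorb the $N_2$-term gives assertion (i) for $\lambda\ge\lambda_0$.

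The places that demand genuine care are the two where the half-space geometry intervenes. The first is the boundary localization: one must use precisely the corner-supported cutoff of Theorem~\ref{thm3.4}, so that each nonzero $u_{(s,y)}$ falls under the hypotheses of Corollary~\ref{3.16.1.2} — compact support inside a half-cylinder $Q^+_{R_0}$ centered at a point of $\bR^{d+1}_+$, with zero lateral trace — while the normalization $\int\zeta^p=1$ still survives integration over $\bR^{d+1}_+$. The second is the zeroth-order bound $\lambda\|u\|_{L_p}\le N\|f\|_{L_p}$, which rests on the parabolic Alexandrov/maximum-principle estimate in the half space and is the one ingredient not produced by the perturbation machinery of the preceding sections; everything else is a routine transcription of the proofs of Theorems~\ref{7.13.2} and~\ref{theorem 8.5.1}.
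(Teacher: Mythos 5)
Your proposal is correct and takes essentially the same route the paper intends: it explicitly states that Theorem \ref{thm7.3} is deduced from Corollary \ref{3.16.1.2} by modifying the proof of Theorem \ref{theorem 8.5.1}, i.e.\ precisely your combination of the corner-supported cutoff localization as in Theorems \ref{thm3.4} and \ref{7.13.2}, the zeroth-order bound via the proof of Lemma 3.5.5 of \cite{Kr87}, and the mollification/limit scheme for solvability. The details you supply (reduction to $T=-\infty$, the normalization of $\zeta$ surviving integration over $\bR^{d+1}_+$, uniqueness via linearization and parabolic Alexandrov estimates) match what the paper leaves implicit.
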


We are now ready to prove Theorem \ref{mainthm1}.

\begin{proof}[Proof of Theorem \ref{mainthm1}]
The proof is similar to that of Theorem \ref{mainthm2}
in Section \ref{SecproofThm2} with some minor modifications.
As before, we first establish \eqref{eq16.01} as an a priori estimate and we may
assume  that $g\equiv 0$.

We will see again that to obtain the a priori estimate
we do not need  condition   (H$_3$).

Observe that Theorems \ref{7.13.2} and \ref{thm7.3}
with $\lambda=\lambda_{0}$ imply that
$$
\|\partial_{t}u\|_{L_p(\bR^{d+1}_{0})}+\|D^2u\|_{L_p(\bR^{d+1}_{0})}
\leq
 N(\|\partial_{t}u+F(D^{2}u) \|_{L_p(\bR^{d+1}_{0})}
$$
$$
+\| u \|_{L_p(\bR^{d+1}_{0})}
),\quad \forall u\in W^{1,2}_p(\bR^{d+1}_0) ,
$$
$$
\|\partial_{t}v\|_{L_p(\bR_{+}\times\bR^{d}_{+})}+
\|D^2v\|_{L_p(\bR_{+}\times\bR^{d}_{+})}\leq
 N(\|\partial_{t}v+F(D^{2}v) \|_{L_p(\bR_{+}\times\bR^{d}_{+})}
$$
\begin{equation}
                                                              \label{8.11.3}
+\| v \|_{L_p(\bR_{+}\times\bR^{d}_{+})} ) ,
\quad   \forall v\in  \WO^{1,2}_p (\bR_{+}\times\bR^{d}_{+}) ,
\end{equation}
where $N=N(d, p, \delta,R_{0})$ (provided that $\theta
=\theta(d,p,\delta)$ is chosen appropriately).

Now suppose that $u\in \WO^{1,2}_p(\cD_T)$ satisfies
\begin{equation}
                                                \label{7.14.2b}
\partial_t u+ F(D^{2}u, t,x)+G(D^{2}u,D u,u(t,x), t,x) =0
\end{equation}
in $\cD_T$. We extend $u$ and $G$ to be zero for $t>T$. It is easily
seen that the extended $u\in \WO^{1,2}_p(\cD_\infty)$ satisfies
\eqref{7.14.2b} in
$\cD_\infty$. Define
$$
f(t,x)=-G(D^2 u(t,x),Du(t,x),u(t,x),t,x).
$$

After that
by using the technique   based on flattening the boundary,
partitions of   unity, and interpolation inequalities
allowing one to estimate $Du$ through $D^{2}u$ and $u$  and
also using
\eqref{8.11.3} we obtain that
$$
\|\partial_{t}u\|_{L_p(\cD_{\infty})} + \|D^2 u\|_{L_p(\cD_{\infty})}
\le  N_1  (\| f \|_{L_p(\cD_{\infty})}+\| u \|_{L_p(\cD_{\infty})}),
$$
which is the same as
\begin{equation}
                        \label{8.11.4}
\|\partial_{t} u\|_{L_p(\cD_{T})} + \|D^2 u\|_{L_p(\cD_{T})}
\le N_{1} (\| f \|_{L_p(\cD_{T})}+\| u \|_{L_p(\cD_{T})}),
\end{equation}
provided that $\theta$ is sufficiently small
 depending only on $d$, $p$,
$\delta$, and the $C^{1,1}$ norm of $\partial D$.
Here $N_1$ depends only on $d$, $p$, $\delta$,  $R_0$,  and the  $C^{1,1}$
norm of $\partial \cD$.

It follows from the definition of $f$ and $({\rm H}_2)$ that, for any $s>0$,
$$
\|f\|_{L_p(\cD_T)}\le \chi(s)\|D^2
u\|_{L_p(\cD_T)}+\|\chi\|_{L_\infty}sT^{1/p}|\cD|^{1/p}
$$
\begin{equation}
                            \label{eq23.45p}
+
K(\|Du\|_{L_p(\cD_T)}+\|u\|_{L_p(\cD_T)})+\|\bar{G}\|_{L_p(\cD_T)}.
\end{equation}
Upon taking $s$ large such that $N_1\chi(s)\le 1/2$,
we get from \eqref{8.11.4}, \eqref{eq23.45p}
and the interpolation inequality that
\begin{equation}
                        \label{eq00.35}
\|u\|_{W^{1,2}_p(\cD_T)}
\le  N_2(\|u\|_{L_p(\cD_T)}+\|\bar G\|_{L_p(\cD_T)}+
\|\chi\|_{L_\infty}sT^{1/p}|\cD|^{1/p}),
\end{equation}
where $N_{2}$ is the same type of constant as $N_{1}$.

Next,  one can  estimate the $L_p(\cD_T)$ norm of $u$ by rewriting
\eqref{7.14.2b} similarly to \eqref{eq09.30}
as
$$
\partial_t u+Lu+b^iD_i u-cu=-G(D^{2}u,0,0,t,x)
$$
and using the parabolic Alexandrov estimates.   This will lead to an
a priori estimate \eqref{eq16.01} as in the proof of Theorem
 \ref{mainthm2} with $N$ depending also on $T$. To see that $N$ can be
chosen to be independent of $T$, we suppose without loss of generality
that
$\cD\subset B_{R/2}$,
where $R=4\text{diam}(\cD)$, and take the barrier
function $v_0$ defined on $\bR^d$ from Lemma~11.1.2 of \cite{Kr08}, which
satisfies in $B_R$,
$$
v_0>0,\quad Lv_0+b^{i}D_{i}v_0-cv_0\le -1.
$$
Denote $v=u/v_0$. Then $v\in \WO^{1,2}_p(\cD_T)$ satisfies
$$
\partial_t v+ Lv+\tilde b^{i}D_{i}v
-\tilde cv =-v_0^{-1}G(D^2(v_0v),0,0,t,x)
$$
in $\cD_T$, where
$$
\tilde b^i=b^i+2a^{ij}v_0^{-1}D_j v_0,\,\,\,
\tilde c=-v_0^{-1}\left( Lv_0+b^{i}D_{i}v_0-cv_0\right).
$$
It is easily seen that we can find constants $\tilde K>0$ and
$\nu>0$ depending only on
$d$,
$\delta$, $K$, and $R$, such that
$$
|\tilde b|\le \tilde K,\quad \nu\le \tilde c\le \tilde K.
$$
We then write $\tilde c=\hat c+\nu$ so that $\hat c\ge 0$.
As in the proof of Theorem \ref{3.22.1} (ii), it holds that
$$
\nu\|v\|_{L_p(\cD_T)}\le N(d,\delta,p)\|v_0^{-1}G(D^{2}(v_0 v),0,0,t,
x)\|_{L_p(\cD_T)},
$$
which gives
\begin{equation}
                                        \label{eq00.340}
\|u\|_{L_p(\cD_T)}\le N(d,\delta,p,R)\|G(D^{2}u,0,0,t, x)\|_{L_p(\cD_T)},
\end{equation}
owing to the properties of
$v_0$.
Combining \eqref{eq00.340} and \eqref{eq00.35}, we finish proving the a
priori estimate as in the proof of Theorem \ref{mainthm2}.

With the a priori estimate \eqref{eq16.01} in hand, the
existence and uniqueness are obtained by the same argument as at the end of Section
\ref{SecproofThm2}  relying on  condition  (H$_3$).
 The theorem is proved.
\end{proof}

\end{document}